\theoremstyle{definition}
\theoremstyle{plain}
\newcommand{\floor}[1]{\lfloor #1 \rfloor}
\DeclareMathOperator*{\argmin}{arg\,min}
\DeclareMathOperator*{\cof}{cof}
\renewcommand{\epsilon}{\varepsilon}
\newcommand{\norm}[1]{\left\|#1\right\|}
\newcommand{\abs}[1]{\left|#1\right|}
\newcommand{\dx}[1][x]{\, \mathrm{d}#1}
\newcommand{\dt}{\dx[t]}
\newcommand{\R}{\mathbb{R}}
\newcommand{\N}{\mathbb{N}}
\newcommand{\weakto}{\rightharpoonup}
\renewcommand{\div}{\operatorname{div}}
\newtheorem{theorem}{Theorem}
\newtheorem{lemma}[theorem]{Lemma}
\theoremstyle{definition}
\newtheorem{definition}[theorem]{Definition}
\newtheorem{assumptions}[theorem]{Assumptions}
\newtheorem*{remark}{Remark}
\numberwithin{theorem}{section}
\numberwithin{equation}{section}
\keywords{Minimizing movements, Hyperbolic evolutions, Time-discretisation, Elastodynamics, Solids with large deformations, MSC2020 classes: 65M12, 74H15, 74H80, 74B20, 74H55, 74H30}
\title[In time convergence of hyperbolic approximations]{Stability and convergence of in time approximations of hyperbolic elastodynamics via stepwise minimization}
\author{Anton\'in \v Ce\v s\'ik}
\address{Faculty of Mathematics and Physics, Charles University, Sokolovsk\'{a} 83, 18675, Prague, Czech Republic} \email{cesik@karlin.mff.cuni.cz}
\author{Sebastian Schwarzacher}
\address{Department of Mathematics, Uppsala University, Box 480
    751 06 Uppsala \&  Faculty of Mathematics and Physics, Charles University,
Sokolovsk\'{a} 83, 18675, Prague, Czech Republic }
\email{schwarz@karlin.mff.cuni.cz}
\begin{document}
 \begin{abstract}
We study step-wise time approximations of non-linear hyperbolic initial value problems. The technique used here is a generalization of the minimizing movements method, using two time-scales: one for velocity, the other (potentially much larger) for acceleration.
The main applications are from elastodynamics namely so-called generalized solids, undergoing large deformations. The evolution follows an underlying variational structure exploited by step-wise minimisation.
 We show for a large family of (elastic) energies that the introduced scheme is stable; allowing for non-linearities of highest order. If the highest order can assumed to be linear, we show that the limit solutions are regular and that the minimizing movements scheme converges with optimal linear rate. Thus this work extends numerical time-step minimization methods to the realm of hyperbolic problems.
\end{abstract}

 \maketitle

 \tableofcontents


\section{Introduction}

%

We study step-wise time approximations of hyperbolic non-linear initial value problems. For this we consider $Q\subset \R^n$ a bounded Lipschitz domain and a time interval $[0,T]$. 
The partial differential equations considered here are of the form
\begin{align}
\label{eq:intro}
\begin{aligned}
&\partial_{tt} \eta(t,x) + DE(\eta(t,x))=f(t,x)\text{ for }(t,x)\in [0,T]\times \Omega
\\
&\eta(0,x)=\eta_0(x),\quad \partial_t\eta(0,x)=\eta_*(x)\text{ for }x\in Q
\end{aligned}
\end{align}
where $E$ is some energy functional, $DE$ its (Fr\'echet) derivative, $\eta_0$, $\eta_*$ given initial conditions and $f$ a given right hand side.
We supplement the problem with prescribed boundary values. Our motivational example are the dynamics of  \emph{largely deforming} elastic solids. Therefore one critical challenge of this paper is to allow for elastic energies that include negative powers of the Jacobian (see \eqref{kelvinVoigt}). This means the energies can be non-convex and be defined over a {\em non-convex state space} (as for example the space of vectorfields with positive Jacobians a.e.).

In the case of {\em a convex state space}, implicit Euler or variational schemes have been studied exhaustively, see for instance \cite{Friesecke1997,Carstensen2004,Demoulini2001, Prohl2008,Haehnle2010}.  
Further in situations where additionally the energy is assumed to be convex more approaches are applicable, see for instance the classical works~\cite{Kacur1986,Pultar1984}. Note that none of the above literature is applicable for largely deforming solid evolutions. In particular, none treats non-convex state spaces. Actually, it seems that the case of a hyperbolic PDE describing largely deforming solid evolutions has not been treated before. There are so far only numeric results for the quasi-static and visco-elastic case~\cite{barrett2010numerical,RouTso21}.

We follow the scheme developed in \cite{BenesovaKampschulteSchwarzacher} where via step-wise minimization a second order in time evolution was approximated. The heart of the method was to use two different time scales -- the {\em velocity scale $\tau$} and the (potentially much larger) {\em acceleration scale $h$}. Accordingly, $\partial_{tt}\eta$ is approximated as

\begin{equation*}
\partial_{tt}\eta(t)\approx \frac{\frac{\eta(t)-\eta(t-\tau)}{\tau}-\frac{\eta(t-h)-\eta(t-h-\tau)}{\tau}}{h}
\end{equation*}
In \cite{BenesovaKampschulteSchwarzacher} the fact was exploited that for a fixed acceleration scale a gradient flow structure can be used naturally on the length scale of $h$. This opened the door to use variational strategies for hyperbolic evolutions.

{\bf In this paper we wish to investigate the potential of the method for numerical discrete-in-time approximations.}

We consider this question worthy since step-wise minimisation is a rather well established approximation strategy for gradient flows ever since the seminal works of DeGiorgi~\cite{DeGiorgi}.
It was used widely both for analysis and numerics, see ~\cite{Kru98,mayer2000numerical,barrett2010numerical,
BarKru11,gigli2013entropic,
laux2016convergence,
RinSchSul17,MieRou20,RouTso21} and the references therein for some examples of applications, but there are many more.

In this work we analyse the potential of step-minimisation for numerics of hyperbolic problems, by providing respective stability and convergence results. 

The main motivating application are hyperbolic evolutions for {\em elastic solids that may deform largely}~\cite[Chapter 3]{BenesovaKampschulteSchwarzacher}. 
A typical example for the elastic energy (see \cite{Ball1976,Dacorogna2007}) is
\begin{equation}
\label{kelvinVoigt}
E_1(\eta) := \begin{cases}
 \displaystyle\int_Q \frac{1}{8}\big(\mathcal{C} (\nabla \eta^T \nabla \eta- I) \big) \cdot \big( \nabla \eta^T \nabla \eta- I \big) + \frac{1}{(\det \nabla \eta)^a}dx, &\text{for $\eta$, $\det \nabla \eta > 0$ a.e.\ in $Q$ }
 \\
+ \infty & \text{otherwise.} 
\end{cases}
\end{equation}  Here $\mathcal{C}$ is a positive definite tensor of elastic constants and $a$ a given exponent. In $E_1$ the first term corresponds to the Saint Venant-Kirchhoff energy, the second models the resistance of the solids to infinite compression.

{\em It is the non-convexity of the state space (not of the functional) that precludes using other approaches such as fixed point methods}; but a minimizer can still be found~\cite{Ball1976,Dacorogna2007}. Observe that this non-convexity stems from the physical requirement that compressing a bulk solid into zero volume requires infinite energy. Hence, this example is a strong motivation for the {\em utility and potential necessity of variational schemes}.

Although largely deforming elastic solids are our main motivation, the scheme has the potential to be used in other possible more complicated scenarios. Applications of elastodynamics with good perspective of application of the methods here include fluid-structure interaction scenarios~\cite{BenesovaKampschulteSchwarzacher}, \cite{Benesova2023}, \cite{Kampschulte2023}, (elasto-)plastic motions~\cite{MieRou06,MieRou16,Mielke2017,RinSchSul17} or including damage~\cite{MieRou06} or temperature~\cite{MieRou20,BadFriKru22}. For that reason we additionally provide abstract assumptions for stability and convergence. 

The setting also includes the case of ordinary differential equations in $\R^n$. In this setting the results are illustrate with some numerical experiments showing the sharpness of the estimates in \Cref{sec:numerics}.

%
 The current state of the art in the analysis for {\em evolutionary large deformation solids} conventionally assumes the elastic energy also to depend on higher order derivatives.
 Such solids are known as {\em hyperelastic solids}~\cite{Nguyen2000,Healey2009,Doghri2000,Kruzik2019} and conventionally add to $E_1$ a second functional $
 E_2\colon W^{k,p}(Q)\to [0,\infty]
 $
 which is typically of the form
\begin{align}
\label{e2:nonlin}
E_2(\eta)&=\int_Q(1+\abs{\nabla^2\eta})^{q-2}\abs{\nabla^2\eta}^2\, dx\text{ or }
\\
\label{e2:lin}
E_2(\eta)&=\int_Q\abs{\nabla^k\eta}^{2}\, dx.
\end{align}
For the latter one $DE_2$ is a linear operator. 

{\em In this paper we provide stability and convergence results for the variationally constructed two-scale time-discrete
solutions.}

All here demonstrated results are valid for arbitrary choices of $\tau$ and $h$, provided they are sufficiently small in relation to the non-convexity. This might seem surprising with regard to the fact that in the analytic results using the hyperbolic variational scheme the independent limit passage is essential. This is one of the reasons we believe the here demonstrated estimates to be relevant for pure analytic applications independently from its value for numerics, see also~\Cref{rem:existence}.

We include here the main results for the model case and $\tau=h$. The general results for arbitrary $\tau$ and $h$ and for general assumptions on $DE(\eta)$ can be found in \Cref{thm:stability-solid}, \Cref{thm:convergence-rate-solid} and \Cref{thm:higher-space-regularity}.

Our main stability theorem deals with the approximation \eqref{def:eta-minimizing-movements} which includes a dissipation term (vanishing in the limit) which stabilizes the scheme so that there is no increase in energy with time.

\begin{theorem}[Stability for the scheme with artificial viscosity]
 Let $E(\eta)=E_1(\eta)+E_2(\eta)$, where $E_1$ is given by \eqref{kelvinVoigt} and $E_2$ by \eqref{e2:nonlin} or \eqref{e2:lin} satisfying \eqref{eqn:alpha0}. Let $\eta_k$ be the variational approximation obtained by step-wise minimization 
 \begin{equation}\label{def:eta-minimizing-movements-hequaltau}
\eta_k = \argmin_{\eta\in\mathcal E}\frac{\tau^2}{2}\norm{\frac{\frac{\eta-\eta_{k-1}}{\tau}-\frac{\eta_{k-1}-\eta_{k-2}}{\tau}}{\tau}
}_{L^2}^2+E(\eta) +\frac{c\tau^2}{2}\norm{\nabla\frac{\eta_k-\eta_{k-1}}{\tau}}_{L^2}^2 -\langle f_k , \eta\rangle_{L^2},
\end{equation}
where $\mathcal E$ is defined in \eqref{eq:E} and $c>0$.
Then there exists a $\tau_0>0$ and a $c_0>0$ depending on the assumptions on $E$, the initial data and the right hand side, such that for all $0<\tau\leq\tau_0$ and for $c\geq c_0$, the stability estimate 
\begin{equation*}
E(\eta_k) +\frac12\norm{\frac{\eta_k-\eta_{k-1}}{\tau}}_{L^2}^2  \leq  \left(E(\eta_0)+\frac12 \norm{\eta_*}_{L^2}^2+ \frac12\norm{f}_{L^2((0,T);L^2)}^2\right),
\end{equation*}
is satisfied
whenever the right hand side is finite and $(\eta_k)_{k=1,\dots,\floor{T/\tau}}$ does not reach a collision. 
\end{theorem}

Next we show that (given uniform convexity of the leading term) one obtains an energy estimate also for the direct approach, meaning without the stabilization term. In this case energy can increase in time even in the case $f=0$, however note that the loss is controlled by $(1+4C\tau h \ell)\leq (1+4C\tau T)\to 1$ with $\tau\to 0$. Though this stability estimate is weaker than the previous one, we include it as it is of possible interest due to the easier implementation of this scheme.

\begin{theorem}[Stability of the direct approach]
Let $E$ be as above and moreover $E_2$ be uniformly convex. Let $\eta_k$ be the variational approximation obtained by step-wise minimization  \begin{equation}\label{def:eta-minimizing-movements-nodiss-hequaltau}
\eta_k = \argmin_{\eta\in\mathcal E}\frac{\tau^2}{2}\norm{\frac{\frac{\eta-\eta_{k-1}}{\tau}-\frac{\eta_{k-1}-\eta_{k-2}}{\tau}}{\tau}
}_{L^2}^2+E(\eta) -\langle f_k , \eta\rangle_{L^2},
\end{equation}  
where $\mathcal E$ is defined in \eqref{eq:E}.
Then there exists a $\tau_0>0$ depending on the initial data and the right hand side, such that for all $0<\tau\leq\tau_0$, the stability estimate 
\begin{equation*}
E(\eta_k) +\frac12\norm{\frac{\eta_k-\eta_{k-1}}{\tau}}_{L^2}^2  \leq  \left(E(\eta_0)+\frac12 \norm{\eta_*}_{L^2}^2+ \frac12\norm{f}_{L^2((0,T);L^2)}^2\right)(1+4C\tau h \ell),
\end{equation*}
is satisfied
whenever the right hand side is finite and $(\eta_k)_{k=1,\dots,\floor{T/\tau}}$ does not reach a collision. The constant $C$ depends on the given data only.
\end{theorem}

In the case of a linear higher order term we prove that solutions are unique and hence the scheme converges (for arbitrary initial data). If the initial data is smooth we deduce that the scheme converges with a rate. This result is valid for both schemes, the direct approach as well as the approach with artificial viscosity.
\begin{theorem}[Convergence rate]
\label{thm:conv}
Let $E(\eta)=E_1(\eta)+E_2(\eta)$, where $E_1$ is given by \eqref{kelvinVoigt} and $E_2$ by \eqref{e2:lin} satisfying  \eqref{eqn:alpha0}. Let further $\eta$ be the solution of \eqref{eq:intro} with boundary values \eqref{eq:bv1}, \eqref{eq:bv}. We denote the error $e_k=\eta_k-\eta(\tau k)$, where $\eta_k$ is the variational approximation obtained by step-wise minimization defined by \eqref{def:eta-minimizing-movements} or \eqref{def:eta-minimizing-movements-nodiss}.
There exist constants $C_1,C$ and $\tau_0>0$ depending on the given data, such that for all $0<\tau\leq \tau_0$ the following convergence estimate holds 
\begin{equation*}
\frac12 \norm{\nabla^{k_0}e_k}_{L^2}^2 + \frac{1}{2}  \norm{\frac{e_k-e_{k-1}}{\tau}}_{L^2}^2 \leq \tau^2 CT e^{C_1 k \tau },
\end{equation*}
whenever 
\begin{equation*}
\label{eq:smooth}
\eta_0,\eta_*\in W^{3k_0,2}(Q), \quad f\in W^{2,2}((0,T);L^2(Q)).
\end{equation*}
\end{theorem}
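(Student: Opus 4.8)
The plan is to derive a discrete energy-type estimate for the error sequence $(e_k)$ by testing the difference of the discrete scheme and the continuous equation against the discrete velocity of the error, $\frac{e_k-e_{k-1}}{\tau}$, and then to close a discrete Gronwall inequality. First I would write the Euler--Lagrange equation satisfied by the minimizer $\eta_k$ of \eqref{def:eta-minimizing-movements}, which has the form of a discretized second-order equation: the two-scale second difference quotient of $\eta_k$ plus $DE(\eta_k)$ equals a (time-averaged) right-hand side, plus boundary contributions. Since here $\tau = h$, this is simply $\frac{\eta_k - 2\eta_{k-1} + \eta_{k-2}}{\tau^2} + DE(\eta_k) = \bar f_k$. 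Because $DE_2$ is linear (from \eqref{e2:lin}), the highest-order part of $DE$ contributes a term $DE_2(e_k)$ when subtracted from the continuous equation, and this is exactly what produces the coercive term $\norm{\nabla^{k_0}e_k}_{L^2}^2$ after testing; the lower-order nonlinear part $DE_1$ is handled by the Lipschitz/monotonicity-type bounds that are available once one knows (via the stability theorem and the smoothness assumption) that both $\eta_k$ and $\eta(\tau k)$ stay in a region bounded away from collisions.

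The key steps, in order, are: (i) subtract the continuous equation evaluated at $t = \tau k$ from the discrete Euler--Lagrange equation for $\eta_k$, collecting the \emph{consistency error} coming from replacing $\partial_{tt}\eta$ by the second difference quotient and $f$ by its time average; by Taylor expansion this consistency error is $O(\tau^2)$ in $L^2$, using precisely the regularity $\partial_{tt}\eta, \partial_{tttt}\eta \in L^2$, which follows from $\eta_0,\eta_*\in W^{3k_0,2}$ and $f\in W^{2,2}((0,T);L^2)$ together with the higher-space-regularity result \Cref{thm:higher-space-regularity}. (ii) Test this error equation with $\frac{e_k-e_{k-1}}{\tau}$ and sum over $k$; the second-difference term telescopes into $\frac12\norm{\frac{e_k-e_{k-1}}{\tau}}_{L^2}^2$ plus a nonnegative discrete-in-time artefact, the linear term $DE_2$ yields $\frac12\norm{\nabla^{k_0}e_k}_{L^2}^2$ after a discrete summation by parts in time, and the nonlinear term $DE_1(\eta_k)-DE_1(\eta(\tau k))$ is estimated by $C\norm{e_k}\,\norm{\frac{e_k-e_{k-1}}{\tau}}$ using local Lipschitz continuity of $DE_1$ on the collision-free set. (iii) Absorb, apply Young's inequality to the consistency and nonlinear terms, and invoke discrete Gronwall to obtain the stated bound $\tau^2 C T e^{C_1 k\tau}$.

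The main obstacle I expect is step (ii)'s handling of the nonlinear lower-order term $DE_1$ together with the determinant constraint: $DE_1$ is only defined and smooth where $\det\nabla\eta>0$, so one must first guarantee quantitatively that the discrete iterates $\eta_k$ remain in a fixed compact subset of the admissible set on $[0,T]$ — this is where $\tau\le\tau_0$ enters and where the stability estimate (Theorem on Stability) plus the higher-order control of $\eta_k$ coming from the linear $E_2$ (hence $W^{k_0,2}$ bounds, and via \Cref{thm:higher-space-regularity} enough to control $\det\nabla\eta_k$ and its derivatives) must be combined. A secondary technical point is making the consistency estimate rigorous: one must bound the difference between $DE_1(\eta(\tau k))$ in the scheme (where $\eta_k$ is the argument) versus the continuous equation, i.e. control $DE_1(\eta_k) - DE_1(\eta(\tau k)) - D^2 E_1(\eta(\tau k))e_k$ type remainders, which again needs the uniform collision-free bound. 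Once the iterates are confined to such a set, all constants $C, C_1$ become genuine constants depending only on the data, the boundary values, $T$, and the distance to collision, and the discrete Gronwall argument closes exactly as in the parabolic minimizing-movements theory.
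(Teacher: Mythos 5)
Your overall strategy is the paper's: subtract the discrete Euler--Lagrange equation from (a suitable form of) the continuous equation, test with $\frac{e_k-e_{k-1}}{\tau}$, telescope the quadratic terms, bound the nonlinear $DE_1$ difference using confinement to a compact collision-free set, and close with a discrete Gronwall. Two of your quantitative claims, however, do not hold as stated, and the way the paper sets up the comparison is designed precisely to avoid them.

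First, the consistency error. You propose a classical Taylor truncation analysis of the second difference quotient, claiming it is $O(\tau^2)$ in $L^2$ using $\partial_{tttt}\eta\in L^2$. Neither part is right: the scheme's difference quotient is centered at $t_{k-1}$ while $DE$ is evaluated at $t_k$, so even for smooth $\eta$ the classical truncation error is only $O(\tau)$ (it is $-\tau\,\partial_{ttt}\eta+O(\tau^2)$); and the regularity theory of \Cref{thm:higher-space-regularity} under the stated hypotheses delivers $\partial_{ttt}\eta\in L^\infty L^2$ but not $\partial_{tttt}\eta$. The paper sidesteps the issue entirely: it integrates the continuous equation over $s\in(t-h,t)$ and $t\in(t_{k-1}^\ell,t_k^\ell)$ (see \eqref{eqn:eta-discretized-eqn}), so the two-scale second difference quotient of $\eta(t_k^\ell)$ and the averaged forcing \eqref{eqn:fkl-def} appear \emph{exactly}, and the only residual is the time average of $DE(\eta(\cdot))$ minus $DE(\eta(t_k^\ell))$, which is $O(\tau+h)$ by \Cref{lem:ineq-DE2} using only $\Delta^{k_0}\partial_t\eta\in L^\infty L^2$. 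Since this residual is paired with $e_k-e_{k-1}=\tau\frac{e_k-e_{k-1}}{\tau}$, the extra factor $\tau$ plus Young's inequality produces the per-step contribution $\tau C\tau^2$, and Gronwall gives the stated $\tau^2$ bound on the \emph{squared} error. Your argument can be repaired the same way (or by accepting $O(\tau)$ consistency from $\partial_{ttt}\eta$), but as written the claimed order and the invoked regularity are both unavailable.

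Second, the nonlinear term: $DE_1(\eta_k)-DE_1(\eta(\tau k))$ is a second-order differential expression, and its $L^2$ pairing with the test function cannot be bounded by $C\norm{e_k}_{L^2}\norm{\frac{e_k-e_{k-1}}{\tau}}_{L^2}$. The provable bound (\Cref{lem:ineq-DE1}) is $C\norm{\nabla^{k_0}e_k}_{L^2}\norm{e_k-e_{k-1}}_{L^2}$, obtained after an integration by parts and a $C^3$ truncation $\tilde e$ of the density on the compact set where the iterates live; this weaker bound still closes the Gronwall argument because $\norm{\nabla^{k_0}e_k}^2$ is exactly the quantity being propagated. With these two corrections your proof coincides with the paper's.
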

As a by-product we show higher regularity for the $W^{k_0,2}$-case, see \Cref{thm:higher-space-regularity}.

\begin{theorem}[Regularity]
Under precisely the same assumptions as in \Cref{thm:conv}, we find
\begin{equation*}
  \partial_{t} \eta\in L^\infty(((0,T);W^{2k_0,2}_{\text{loc}}(Q)),\quad \partial_{tt} \eta\in L^\infty(((0,T);W^{k_0,2}(Q)),\quad \partial_{ttt} \eta\in L^\infty((0,T);L^2(Q))
\end{equation*}
and $\Delta^{k_0}\partial_t\eta\in L^\infty((0,T);L^2(Q))$,
with natural bounds.
\end{theorem}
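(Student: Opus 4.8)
The plan is to bootstrap from the already-proven convergence-rate result (\Cref{thm:conv}), which controls $e_k$ in $W^{k_0,2}$ together with discrete velocities in $L^2$, together with the stability estimate and the $L^2$-uniqueness / well-posedness of the limit problem \eqref{eq:intro} with the linear highest-order term $DE_2$. Since $DE_2$ is a linear elliptic operator of order $2k_0$ (the Euler--Lagrange operator of $\int_Q|\nabla^{k_0}\eta|^2\,dx$ is, up to lower-order boundary contributions, $\Delta^{k_0}$), the idea is to differentiate the equation \eqref{eq:intro} in time and run an elliptic-regularity-in-space argument on each time-differentiated equation, using the parabolic-type estimates from the discrete scheme to bound the time derivatives that appear on the right-hand side.

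First I would show $\partial_{ttt}\eta\in L^\infty((0,T);L^2(Q))$. Formally differentiate \eqref{eq:intro} twice in time: $\partial_{tttt}\eta + D^2E(\eta)[\partial_{tt}\eta] + (\text{lower, involving }\partial_t\eta\otimes\partial_t\eta) = \partial_{tt}f$. Because the highest-order part $DE_2$ is linear, $D^2E(\eta)$ acting on $\partial_{tt}\eta$ splits as $\Delta^{k_0}\partial_{tt}\eta$ plus a term of order at most $2k_0-1$ (coming from $D^2E_1$ and the nonlinear part, which is lower order), so testing the twice-differentiated equation with $\partial_{ttt}\eta$ and integrating in space yields an energy identity of the form $\frac{d}{dt}\big(\tfrac12\|\partial_{ttt}\eta\|_{L^2}^2 + \tfrac12\|\nabla^{k_0}\partial_{tt}\eta\|_{L^2}^2\big) \le (\text{controlled terms})$, where the controlled terms are handled by the lower-order bounds already available (stability gives $\nabla\eta$, $\partial_t\eta$ in good spaces; the first-order-in-time differentiated analysis gives $\partial_{tt}\eta$ in $W^{k_0,2}$). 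One must of course run this at the discrete level on $\eta_k$ — taking appropriate second-order difference quotients in the discrete equation \eqref{def:eta-minimizing-movements} — derive $\tau$-uniform bounds, and pass to the limit; the assumption $\eta_0,\eta_*\in W^{3k_0,2}$ and $f\in W^{2,2}((0,T);L^2)$ is exactly what is needed to start this hierarchy and to control the initial data of the differentiated problems. This simultaneously delivers $\partial_{tt}\eta\in L^\infty((0,T);W^{k_0,2}(Q))$ from the $\|\nabla^{k_0}\partial_{tt}\eta\|_{L^2}$ term.

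For $\partial_t\eta\in L^\infty((0,T);W^{2k_0,2}_{\text{loc}}(Q))$ and $\Delta^{k_0}\partial_t\eta\in L^\infty((0,T);L^2(Q))$, I would use the once-time-differentiated equation, $\Delta^{k_0}\partial_t\eta = \partial_t f - \partial_{ttt}\eta - (\text{lower-order terms in }\partial_t\eta)$, and read it as an elliptic equation of order $2k_0$ for $\partial_t\eta$ at (a.e.) fixed time: the right-hand side is in $L^\infty((0,T);L^2(Q))$ by the previous step (together with the lower-order bounds and $f\in W^{2,2}((0,T);L^2)\hookrightarrow W^{1,\infty}((0,T);L^2)$), hence interior elliptic regularity for $\Delta^{k_0}$ gives $\partial_t\eta\in W^{2k_0,2}_{\text{loc}}$ with the stated bound, and directly $\Delta^{k_0}\partial_t\eta\in L^\infty((0,T);L^2(Q))$. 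The "loc" is forced because the boundary conditions \eqref{eq:bv1}, \eqref{eq:bv} are only prescribed on part of the data and interior estimates avoid having to track compatibility of $\partial_t\eta$ with the boundary operator at high order.

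The main obstacle is the rigorous justification of the time-difference-quotient argument at the discrete level: one must show the second-order-in-time difference quotients of $\eta_k$ are admissible competitors (or can be handled via the Euler--Lagrange inequality of the stepwise minimization), obtain $\tau$-independent bounds that survive the nonlinearity of $E_1$ and the constraint $\det\nabla\eta>0$ (using the non-collision hypothesis and smallness $\tau\le\tau_0$ to keep $\det\nabla\eta$ bounded away from $0$ so that $DE_1$, $D^2E_1$ are genuinely lower-order and Lipschitz on the relevant set), and then upgrade weak limits to the claimed $L^\infty$-in-time bounds via lower semicontinuity. Controlling the quadratic-in-$\partial_t\eta$ terms that arise from differentiating the nonlinear lower-order operator — showing they really are dominated by the already-established norms rather than feeding back into the top order — is the delicate bookkeeping step, but it is precisely here that the hypothesis that $E_2$ (the only top-order part) is \emph{linear} makes the scheme close.
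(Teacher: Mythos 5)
There is a genuine logical problem with your opening move: you propose to ``bootstrap from the already-proven convergence-rate result (\Cref{thm:conv})'', but in the paper's architecture the dependency runs the other way. The convergence-rate proof (\Cref{thm:convergence-rate-solid}) invokes \Cref{lem:ineq-DE2}, which needs the bound $\|\partial_t(-\Delta)^{k_0}\eta\|_{L^\infty((0,T);L^2)}<\infty$ --- i.e.\ exactly the conclusion of the Regularity theorem (\Cref{thm:higher-space-regularity}). So the Regularity theorem must be proved \emph{before} and \emph{independently of} \Cref{thm:conv}; ``under the same assumptions'' in the statement refers to sharing hypotheses on $\eta_0,\eta_*,f$ and $E$, not to being allowed to use the conclusion of \Cref{thm:conv}. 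Any argument that assumes convergence of the discrete scheme as an input is circular here.

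Beyond this, your route is genuinely different from the paper's, and the difficulty you flag at the end (``rigorous justification of the time-difference-quotient argument at the discrete level'') is precisely what the paper avoids by design. The paper does \emph{not} take second-order difference quotients of the minimizing-movements scheme. Instead it regularizes the continuous PDE by adding an artificial viscosity $\epsilon(-\Delta)^{k_0}\partial_t\eta_\epsilon$ (\eqref{eqn:eta-epsilon-eqn}), proves regularity for this parabolic-type problem by solving a linear auxiliary equation for $\beta$ via Galerkin (\Cref{lem:beta-regularity}), identifies $\beta=\partial_t\eta_\epsilon$ by uniqueness of the linear problem (\Cref{lem:epsilon-dependent}), tests the once-differentiated equation with $\partial_{tt}\eta_\epsilon$ to obtain \emph{$\epsilon$-independent} bounds (\Cref{thm:timereg}), iterates once more for $\partial_{ttt}\eta_\epsilon$ (\Cref{thm:highertime}), and finally sends $\epsilon\to 0$ in \Cref{thm:higher-space-regularity}. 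The only part of your sketch that matches the paper is the last step: reading the once-differentiated equation as an elliptic problem of order $2k_0$ for $\partial_t\eta$ to get $\Delta^{k_0}\partial_t\eta\in L^\infty L^2$ and then $W^{2k_0,2}_{\text{loc}}$ by interior elliptic theory for $\Delta^{k_0}$. Your proposed discrete difference-quotient route might in principle be salvageable, but you would have to (a) drop the appeal to \Cref{thm:conv} and argue directly from the discrete Euler--Lagrange equations and the stability estimate, and (b) actually carry out the $\tau$-uniform bounds on second difference quotients of the nonlinear scheme --- a step you correctly identify as delicate and which the paper's vanishing-viscosity construction is specifically engineered to sidestep.
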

Please observe that the rate shown here is the same as is known to be optimal in the case of parabolic evolutions with convex energies. For the sake of completeness we include some simple numeric experiment in \Cref{sec:numerics} that shows the optimality of the rates shown here.

The difference of the hyperbolic to the parabolic case is that more regularity for the initial state and external forcing has to be assumed, as no smoothing effect over time is available. 

In conclusion the variational time-discrete approximation possesses very strong properties regarding the question of stability and convergence. Indeed, for a family of non-convex and non-linear settings, which are in some sense of lower order, the respective hyperbolic evolution has the same stability and/or convergence properties as in the convex and/or linear case, provided the term of leading order is convex and/or linear. 
\begin{remark}[Evolutions with dissipation potential]
The stability and the convergence estimate are both valid in the presence of dissipation under rather general assumptions, which can be checked easily using the methods introduced in this paper. We decided to put our focus on pure hyperbolic motions here, since even so hyperbolic motions are physically very relevant, much less analysis is available in that regime. Indeed, surveying the state of the art in elastodynamics including large deformation shows that in most analytic results a dissipation potential for the elastic deformation is assumed. This was in particular the case in the recent existence result allowing for inertia~\cite[Section 3]{BenesovaKampschulteSchwarzacher}.
\end{remark}

\begin{remark}[Relevance for existence theory]
\label{rem:existence}
The stability and regularity result seem to have the potential for future use in the analysis. Already in the existence theory obtaining a {\em hyperbolic} energy estimate that is given here in form of a stability estimate {\em on the $\tau$-level} was previously not known and allows to circumvent the testing of $\partial_t\eta$ on the $h$-level as was done in the previous literature of the method~\cite{Benesova2023,BenesovaKampschulteSchwarzacher}.

Reversively the regularity theory allows to show that $\partial_t\eta$ is also a test-function a-posteriori, which allows for precise uniqueness and/or the quantification of distances between solutions.
\end{remark}

\subsection{Structure of the paper}

The paper is of two main parts and a supplement with some numeric experiments.

\Cref{sec:problem-setting} and  is about the stability question. Here we first clarify the abstract assumptions necessary for the stability in \ref{ssec:generalas} and the particular assumptions for elastodynamics~\ref{ssec:aselas} of which we show that they are satisfied for the leading elastic example~\ref{ssec:proto}. Second we provide some general Gronwall inequality for discrete schemes with two scales, which will be used to show the stability in~\ref{ssec:gronwall}. This builds one of the technical highlights of this part, another is the critical non-convexity estimate for elastic energies \ref{ssec:non-convex}. The scheme is introduced in \ref{ssec:scheme} and
stability estimate is then proved in \ref{ssec:stab}--\ref{ssec:stabend}.

In \Cref{sec:convergence} we focus on the case of elasto-dynamics for which we explicitly prove regularity and convergence, we also provide abstract assumptions at the beginning of the section. In \Cref{ssec:timereg} we show in-time  regularity for hyperbolic elastodynamics. Theorem~\ref{thm:convergence-rate-solid} is then proved at the end of the section.

We conclude the paper with some numerical experiments in \Cref{sec:numerics} that imply that the rates are optimal, that the appearance of $\tau_0$ in the stability and convergence results is necessary and the loss of convergence in case $h$ and $\tau$ differ. {The experiment is merely to show that our results are in coherence with expectations from the ODE theory. We leave the implementation for largely deforming solids to  a future work.}

\section{Stability}\label{sec:problem-setting}

\subsection{General setting for stability}
\label{ssec:generalas}
We formulate our problem of interest in a rather general case. Let $X$ be a reflexive Banach space, which densely embeds into some Hilbert space $H$. Further, let $\mathcal{E}\subset X$ be a weakly closed subset. Let the energy $E\colon X\to (-\infty,\infty]$ satisfy \Cref{assumptions-general} below.
We consider the problem 
\begin{align}
\begin{aligned}\label{eqn:general-eqn}
\partial_{tt}\eta +DE(\eta) &= f \quad \text{in }(0,T)\\
\eta(t)&\in \mathcal E \text{ for a.e. }t\in(0,T)\\
\eta(0)&=\eta_0\\
\partial_t\eta(0)&=\eta_*,
\end{aligned}
\end{align}
where $DE$ denotes the (Fr\'echet) derivative of $E$.
We assume that the initial conditions and right hand side have
\begin{equation*}
\eta_0\in X, \quad \eta_* \in H, \quad f\in L^2((0,T);H).
\end{equation*}
For convenience we denote \begin{equation}\label{eqn:Linfinity-E}
L^\infty((0,T);\mathcal E)= \{\eta\in L^\infty((0,T);X): \eta(t)\in \mathcal E \text{ for a.e. }t\in(0,T)\}.
\end{equation}

Now we define the notion of weak solution to the problem above.
\begin{definition}[Weak solution]\label{def:weaksol-general}
We say that 
\begin{equation*}
\eta\in L^\infty((0,T);\mathcal E)\quad \text{with}\quad \norm{E(\eta)}_{L^\infty((0,T))}<\infty \quad\text{and}\quad\partial_t\eta\in L^\infty((0,T);H)
\end{equation*}
is a weak solution to \eqref{eqn:general-eqn} if\footnote{Note that since $\eta\in W^{1,\infty}((0,T);H)$, the value $\eta(0)\in H$ is well-defined.} $\eta(0)=\eta_0$ and
\begin{equation*}
\langle \eta_*, \varphi(0)\rangle_H +\int_0^T -\langle\partial_t \eta, \partial_t \varphi \rangle_H +\langle DE(\eta), \varphi\rangle_X\dt = \int_0^T \langle f, \varphi\rangle_H\dt
\end{equation*}
for all $\varphi\in L^2((0,T);X)\cap W^{1,2}((0,T);H)$ with $\varphi(T)=0$.
\end{definition}

\begin{assumptions}\label{assumptions-general}
The energy $E\colon X\to (-\infty,\infty]$ satisfies the following assumptions:
\begin{enumerate}[label=(A.\arabic*), ref=A.\arabic*]
\item $E(\eta)<\infty$ for $\eta\in \mathcal{E}$.\label{as:gen-finite}
\item There is $E_{\min} >\infty$ such that $E(\eta)\geq E_{\min}$ for all $\eta\in X$.\label{as:gen-bdd-below}
\item $E$ is Fr\'echet differentiable at each $\eta\in \operatorname{int} \mathcal{E}$, and $DE\colon \operatorname{int} \mathcal{E} \to X^*$ is strongly continuous.\label{as:gen-E-differentiable}
\item $E$ is coercive in the following sense: for every $E_{\min}\leq K<\infty$ the sublevel set $\{\eta\in X: E(\eta)\leq K\}$ is bounded in $X$.\label{as:gen-coercive}
\item \label{as:nonconvexity-general-Z} There is a Banach space $Z$, such that $X\subset Z$, and a linear operator $L\colon Z\to Z^*$ which is bounded, symmetric and elliptic (meaning that for some $\lambda>0$ it holds $\langle Lz, z\rangle \geq \lambda \|z\|_{Z}^2$ for all $z\in Z$), such that $E$ satisfies the \emph{non-convexity estimate}: For every $E_{\min}\leq K<\infty$ there exists $C$ depending on $K$ such that 
\begin{equation*}
\langle DE(\eta_1), \eta_1-\eta_0\rangle \geq E(\eta_1)-E(\eta_0) -C \norm{\eta_1-\eta_0}_Z^2
\end{equation*}
for all $\eta_1,\eta_0\in \mathcal E$ with $E(\eta_1),E(\eta_0) \leq K$.
\end{enumerate}
\end{assumptions}
The non-convexity estimate is exactly what will enable us to prove the stability of our approximation. Important is its relation to $\mathcal{E}$, which can here be any set. 

The operator $L$ will be used to produce an artificial stabilization in the approximation~\eqref{def:eta-minimizing-movements-gen}. In this setting it will be enough to have the non convexity estimate in $Z$. In case we do not include an extra stabilization term, we can obtain a stability estimate (see \ref{thm:stability-solid}) under the stronger assumption 
\begin{enumerate}[label=(A.5'),ref=A.5']
\item\label{as:nonconvexity-general-H} $E$ satisfies the following \emph{non-convexity estimate}: For every $E_{\min}\leq K<\infty$ there exists $C$ depending on $K$ such that 
\begin{equation*}
\langle DE(\eta_1), \eta_1-\eta_0\rangle \geq E(\eta_1)-E(\eta_0) -C \norm{\eta_1-\eta_0}_H^2
\end{equation*}
for all $\eta_1,\eta_0\in \mathcal E$ with $E(\eta_1),E(\eta_0) \leq K$.
\end{enumerate}

\subsection{Setting for elastodynamics}
\label{ssec:aselas}
As the most prominent application we present the case of dynamic evolution of an elastic solid. The solid is described in Lagrangian coordinates. This means that there is a bounded Lipschitz reference domain $Q\subset \R^n$, and at each time $t$ the solid is described by a deformation $\eta(t)\colon Q\to\R^n$. Then we seek $\eta\colon (0,T)\times Q\to \R^n$ a solution to the equation
\begin{align}
\begin{aligned}\label{eqn:elastic-eqn}
\partial_{tt}\eta+ DE(\eta)&=f\in L^2((0,T);L^2(Q)),  \\
\eta(0)&=\eta_0 \in \mathcal{E},\\ \partial_t\eta(0)&=\eta_*\in L^2(Q), \\
\eta(t,x)&=x, \quad x\in \Gamma_D, \\
\partial_\nu \eta(t,x) &= \nu,\quad x\in \Gamma_N
\end{aligned}
\end{align}
where, $\Gamma_D\cup \Gamma_N=\partial Q$ and $|\Gamma_D|_{d-1}>0$. Here we denote by $\nu$ the outer normal to $\partial Q$ resp. by $\partial_\nu$ the corresponding normal derivative. Further boundary conditions for higher order derivatives naturally appear depending on $E_2$. For more details see the remark on boundary conditions at the end of this subsection.

In any case the set of admissible deformations is 
\begin{equation}
\label{eq:E}
\mathcal E = \left\{ \eta\in X: |\eta(Q)|=\int_Q\det\nabla \eta \dx ,\quad \eta|_{\Gamma_D}(x) = x \text{ for }x\in\Gamma_D \right\},
\end{equation}
where the function space $X$ is one of the following two cases:
\begin{equation}
X=W^{2,q}(Q)\quad \text{or}\quad X=W^{k_0,2}(Q).
\end{equation}
We will henceforth refer to the former as \emph{the $W^{2,q}$-case} and to the latter as \emph{the $W^{k_0,2}$-case}. Regarding the exponents, we assume either 
\begin{equation}
\label{eq:exp1}
q>n
\quad \text{or}\quad
k_0>n/2+1.
\end{equation}
In both cases we have the compact embedding $X\subset\subset C^{1,\alpha}(Q)$ for $0<\alpha<\min (1,\alpha_0)$, where either 
\begin{equation}\label{eqn:alpha0}
\alpha_0=1-n/q\quad \text{or}\quad \alpha_0=k_0-1-n/2.
\end{equation}

The condition $|\eta(Q)|=\int_Q\det\nabla \eta \dx$ is called the Ciarlet--Ne\v cas condition and it guarantees global interior injectivity of $\eta$, cf. \cite{Ciarlet1987}. We can readily see that $\mathcal{E}$ is weakly closed, as the condition is stable under weak convergence in $X$.


Note that in particular if $n=2$ or $n=3$, in the $W^{k_0,2}$-case it is enough to choose $k_0\geq 3$.

Recall the notation \eqref{eqn:Linfinity-E} and also denote\begin{equation}
W^{2,2}_D (Q)= \{u\in W^{2,2}(Q): u|_{\Gamma_D}=0\}
\end{equation}

Now we define the weak solutions, consistently with \Cref{def:weaksol-general}.

\begin{definition}\label{def:weaksol-solid}
We say that $\eta$ with
\begin{equation}\label{eta-weaksol-spaces}
\eta\in L^\infty((0,T);\mathcal E)  \quad\text{with}\quad \norm{ E(\eta)}_{L^\infty((0,T))}<\infty \quad\text{and}\quad \partial_t \eta \in L^\infty((0,T);L^2(Q)).
\end{equation}
is a weak solution to \eqref{eqn:elastic-eqn} if 
\begin{equation*}
\int_0^T -\langle \partial_t \eta ,\partial_t \phi \rangle + \langle DE(\eta), \phi \rangle \dt + \langle \eta_*, \phi(0)\rangle = \int_0^T \langle f, \phi\rangle \dt
\end{equation*}
for all $\phi\in C^\infty([0,T];C^\infty(Q;\R^n))$ with $\phi |_{(0,T)\times \Gamma_D}=0$ and $\phi(T)=0$.
\end{definition}

Now let us specify the assumptions of the elastic energy.
\begin{assumptions}\label{assumptions-energy-solid}
We assume the \emph{energy} $E\colon X\to (-\infty, \infty]$  can be written as the sum 
\begin{equation*}
E(\eta)=E_1(\eta)+E_2(\eta)
\end{equation*}
and $E_1$, $E_2$ satisfy the following assumptions. There exists a density $e$ for $E_1$, that is $e\colon \R^{n\times n} \to (-\infty,\infty]$, such that $E_1$ is of the form
\begin{equation*}
E_1(\eta)=\int_Q e(\nabla\eta) \dx, \quad \eta \in X,
\end{equation*}
 and moreover it holds:
\begin{enumerate}[label=(E.\arabic*), ref=E.\arabic*]
\item $e\in {C}^2(\R^{n\times n}_{\det > 0})$, where $\R^{n\times n}_{\det > 0}= \{M\in\R^{n\times n }: \det M > 0\}$. \label{as:e-C2}
\item There is $e_{\min}>-\infty$ such that $e(\xi)\geq e_{\min}$ for all $\xi\in\R^{n\times n}$.\label{as:e-bdd-below}
\item \label{as:det-lower-bound} For all $K<\infty$ there exists $\epsilon_0>0$ such that for each $\eta\in X$, $E(\eta)\leq K$ implies $\det \nabla\eta \geq \epsilon_0$ in $Q$. 
\item For $\xi\in\R^{n\times n}_{\det>0}$ with $\det \xi\to 0$ it holds $e(\xi)\to\infty$, and $e(\xi)=\infty \text{ for }\det \xi\leq 0$.\label{as:penalize-compression}
\item \label{as:E2-convex} $E_2$ is convex, coercive and differentiable on $X$.
\end{enumerate}
Analogously to the general case, we include separately the stronger convexity assumption that allows to use the approximation without extra stabilization term.
\begin{enumerate}[label=(E.5'),ref=E.5']
\item \label{as:E2-unif-convex} $E_2$ is uniformly convex on $W^{2,2}(Q)$. This means there exists $c>0$ so that it holds for all $\eta\in X$ and $w\in X$
\begin{equation*}
\langle D^2 E_2 (\eta), w\otimes w\rangle \geq c\norm{\nabla^2 w}_{L^2(Q)}^2.
\end{equation*}
\end{enumerate}
\end{assumptions}
Note that in contrast to the abstract setting, we do not assume the non-convexity estimate \eqref{as:nonconvexity-general-Z}. In fact, it will be proven in \Cref{thm:E-noncovexity} that this estimate follows from the other properties and \eqref{as:E2-convex}.
\begin{remark}[Notation]
To avoid confusion with derivatives, we denote the gradient of $\eta\colon Q\to \R^n$ with respect to $x\in Q$ by $\nabla$ (or $\nabla_x$), whereas the gradient of $e\colon \R^{n\times n}\to \R$ will be denoted by $\nabla_\xi$. Similarly for higher derivatives. Moreover, the derivative of $E$ (resp. $E_1$ or $E_2$) will be denoted by $D$, to emphasize that it is a derivative of a functional on the infinite-dimensional space $X$.
\end{remark}
\begin{remark}[More general boundary conditions]
For simplicity we take throughout the paper the assumption that we have boundary conditions
 \begin{equation}
 \label{eq:bv1} 
 g(x)=x
 \end{equation}
 as otherwise the estimates do not change significantly but are much harder to follow. In the case of general boundary function $g$ that is assumed to be extended to $Q$ by $G$ in an appropriate sense (here it means in some Sobolev space $W^{k,p}(Q)$ and with strictly positive Jacobian) we can define
\begin{align}
\label{eq:bv}
\eta(t,x)=g(x)\text{ for } (t,x)\in [0,T]\times \Gamma_D\text{ and }\partial_\nu (\eta(t,x)-G(x))=0\text{ on }[0,T]\times \Gamma_N.
\end{align}
 The related testing space to these boundary conditions is
\[
W=\{\phi\in W^{1,1}(Q)\,:\, \phi(x)=0\text{ for }x\in \Gamma_D\}.
\]
  For the higher order derivatives the respective boundary conditions of Navier type are defined via
 $
 E_2:W^{k,p}(Q)\to [0,\infty],
 $ such that
 \[
 DE_2(\eta-G)\in (W^{k,p}\cap W)^*
 \] 
 In the example of the $k$-Laplacian $E_2(\eta)=\frac{1}{2}\int_Q |\nabla^k \eta|^2 \dx$ this becomes
 \[
\int_Q \nabla^k(\eta-G)\cdot \nabla^k\phi\, dx= \int_Q (-\Delta)^k (\eta-G) \cdot \phi\, dx,
 \]
 which means additional to \eqref{eq:bv} that
 \begin{align*}
 \begin{aligned}
  \partial_\nu \Delta^{k-1}(\eta-G)&=0\text{ on }\Gamma_N,
   \\
   \partial_\nu \nabla^{k-l-1}\Delta^{l}(\eta-G)&=0 \text{ on }\partial Q\text{ for }l\in \{0,...,k-2\}
   \end{aligned}
 \end{align*}
\end{remark}
\subsection{Non-convexity estimate for elastic solids}
\label{ssec:non-convex}
As was indicated already, the non-convexity estimate \eqref{as:nonconvexity-general-Z} is essential for the stability estimates. Previous versions of this estimate in the literature do not allow the estimate for general distances, see~\cite[Proposition 3.2]{MieRou20}. 

As a first step, we show that $e$ and its derivatives are uniformly bounded, with bound depending on the energy only.

\begin{lemma}\label{lem:e-density-bounded}
Let $K\in \R$. Then there exists a $C_K\in \R$ such that every $\eta$ belonging to the weak solution class \eqref{eta-weaksol-spaces} with $\norm{E(\eta)}_{L^\infty((0,T))} \leq K$ satisfies
\begin{equation*}
e(\nabla \eta),|\nabla_\xi e(\nabla \eta)|,|\nabla_\xi^2 e(\nabla \eta)| \leq C_K, \quad \text{in }(0,T)\times Q.
\end{equation*}
\end{lemma}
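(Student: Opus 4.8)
The plan is to reduce everything to a pointwise statement about the density $e$ on a compact subset of $\R^{n\times n}_{\det>0}$, and then exploit the $C^2$-regularity of $e$ there. First I would fix $K$ and take any $\eta$ in the weak solution class with $\norm{E(\eta)}_{L^\infty((0,T))}\leq K$. For a.e.\ $t\in(0,T)$ we then have $E(\eta(t))\leq K$, hence also $E_1(\eta(t))\leq K - E_{\min}$ since $E_2\geq 0$ (using \eqref{as:e-bdd-below} to bound $E_1$ from below and the convexity/non-negativity normalization of $E_2$). The key structural input is the coercivity of $E$: by \eqref{as:gen-coercive}, or in the elastic setting by the combination of \eqref{as:e-bdd-below}, \eqref{as:E2-unif-convex} and the Poincaré inequality with the Dirichlet condition on $\Gamma_D$, the sublevel set $\{E\leq K\}$ is bounded in $X$. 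Since $X\subset\subset C^{1,\alpha}(Q)$ by the embedding recorded after \eqref{eq:exp1}, we obtain a uniform bound $\norm{\nabla\eta(t)}_{C^{0}(Q)}\leq M_K$ for a.e.\ $t$, with $M_K$ depending only on $K$ and the data.

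Next I would invoke \eqref{as:det-lower-bound}: from $E(\eta(t))\leq K$ we get $\det\nabla\eta(t)\geq\epsilon_0>0$ in $Q$, with $\epsilon_0$ depending only on $K$. Combining this with the sup-bound on $\nabla\eta(t)$, the matrix field $\nabla\eta(t,x)$ takes values, for a.e.\ $t$ and every $x\in Q$, in the compact set
\[
\mathcal{K}_K := \{\xi\in\R^{n\times n}: |\xi|\leq M_K,\ \det\xi\geq\epsilon_0\}\subset\R^{n\times n}_{\det>0}.
\]
By \eqref{as:e-C2}, $e\in C^2(\R^{n\times n}_{\det>0})$, so $e$, $\nabla_\xi e$ and $\nabla_\xi^2 e$ are continuous on the compact set $\mathcal{K}_K$ and therefore bounded there; set $C_K:=\max_{\xi\in\mathcal{K}_K}\big(|e(\xi)|+|\nabla_\xi e(\xi)|+|\nabla_\xi^2 e(\xi)|\big)$. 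Evaluating at $\xi=\nabla\eta(t,x)$ gives the claimed pointwise bounds in $(0,T)\times Q$.

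The main (and only real) obstacle is making the coercivity/compact-embedding step quantitative in a way that is uniform in $t$: one must be careful that the bound on $\norm{\nabla\eta(t)}_{C^0}$ comes purely from $E(\eta(t))\leq K$ and not from any time-integrated quantity, so that it holds for a.e.\ $t$ with a constant independent of $\eta$. In the $W^{2,q}$-case this uses that $E_1$ controls $\nabla\eta$ only weakly, so the $W^{2,q}$-bound must genuinely come from $E_2$ via \eqref{e2:nonlin} (or \eqref{e2:lin}); I would state this as: $E_2(\eta(t))\leq K-E_{\min}$ bounds $\norm{\nabla^2\eta(t)}_{L^q}$, which together with the Dirichlet data and Poincaré bounds $\norm{\eta(t)}_{W^{2,q}}$, and then the compact Sobolev embedding finishes it. Everything else is a routine continuity-on-a-compact-set argument.
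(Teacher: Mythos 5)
Your proposal follows essentially the same route as the paper's proof: reduce to a pointwise statement by showing that $\nabla\eta(t,x)$ takes values in the compact set $\{|\xi|\le M_K,\ \det\xi\ge\epsilon_0\}\subset\R^{n\times n}_{\det>0}$, using \eqref{as:det-lower-bound} for the determinant floor, coercivity (control via $E_2$) plus the embedding $X\subset\subset C^{1,\alpha}(Q)$ for the $L^\infty$ gradient bound, and then \eqref{as:e-C2} for boundedness of $e,\nabla_\xi e,\nabla_\xi^2 e$ on that compact set. You spell out the pointwise-in-time coercivity step in slightly more detail, but the argument is the same.
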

\begin{proof}
Using the assumption \eqref{as:det-lower-bound} we see that 
\begin{equation*}
\det \nabla \eta \geq \epsilon_0 \quad \text{in }(0,T)\times Q
\end{equation*}
with $\epsilon_0$ depending only on $K$.
Moreover, since $X$ is embedded into $C^{1,\infty}$, we see that, by boundedness of $E_2$, $|\nabla \eta (t,x)| \leq C$, where $C$ depends on $K$ and the embedding $X\subset C^{1,\infty}$. This means that
\begin{equation*}
\nabla \eta(t,x)\in \mathcal{K} := \{ M\in\R^{n\times n} : \det M\geq \epsilon_0, |M| \leq C \} .
\end{equation*}
Since $\mathcal{K}$ is a compact set contained in $\R^{n\times n}_{\det >0}$ where $e$ is $C^2$, we know that $e,\nabla_\xi e,\nabla_\xi^2 e$ are bounded on $\mathcal K$. Since $\mathcal K$ depends only on $K$, the proof is finished.
\end{proof}

Now we will estimate the non-convexity of $E_1$ in terms of the distance of gradients. 
\begin{lemma}[Non-convexity estimate (I)]\label{thm:E-noncovexity}
Suppose that we have $\eta_0,\eta_1\in\mathcal E$ with	
\begin{equation*}
E(\eta_0),E(\eta_1)\leq K <\infty.
\end{equation*}
Then there exists a constant $C_1$ depending only on $ K$ such that 
\begin{equation*}
\langle DE_1(\eta_1), \eta_1-\eta_0\rangle \geq E_1(\eta_1)-E_1(\eta_0)  - C_1 \norm{\nabla \eta_1-\nabla\eta_0}_{L^2}^2.
\end{equation*}
\end{lemma}
\begin{proof}
Throughout the proof, any constant named $C_i$ with any index $i$ depends only on $K$.
By coercivity of $E$, we have that $\norm{\eta_0}_X,\norm{\eta_1}_X\leq C_K$ and thus $\norm{\nabla \eta_0}_{L^\infty},\norm{\nabla \eta_1}_{L^\infty}\leq C_\infty$. 
Let $\epsilon_0$ be the lower bound on the determinant from \eqref{as:det-lower-bound}, corresponding to $K$. Now the set 
\begin{equation*}
\mathcal{A} = \{M\in\R^{n\times n} : \det M \geq \epsilon_0, |M|\leq C_\infty\}
\end{equation*}
is compact in $\R^{n\times n}_{\det > 0}$, and
\begin{equation*}
\mathcal{B} = \{M\in \R^{n\times n} : \det M \geq \epsilon_0/2, |M| \leq 2C_\infty \}
\end{equation*}
is likewise compact in $\R^{n\times n}_{\det > 0}$, with $\mathcal A\subset \operatorname{int} \mathcal B$. So there exists $r>0$ such that $B_r(\mathcal A)\subset \mathcal B$, in other words,
$
\text{ for all } A\in \mathcal A  \text{ and for all } B\in \R^{n\times n}$, $|B-A|<r$ implies $ B\in \mathcal B.
$

Now let us split our expression in two parts:
\begin{equation*}
\langle DE_1(\eta_1), \eta_1 -\eta_0\rangle = \int_{\{|\nabla \eta_1-\nabla \eta_0| \leq r\}} \hspace{-2em}\nabla_\xi e(\nabla \eta_1): \nabla (\eta_1-\eta_0) \dx  +\int_{\{|\nabla \eta_1-\nabla \eta_0| > r\}} \hspace{-2em}\nabla_\xi e(\nabla \eta_1): \nabla (\eta_1-\eta_0) \dx.
\end{equation*}
For the second part, we recall from Lemma \ref{lem:e-density-bounded} that $|\nabla_\xi e(\nabla \eta_1) |\leq C_K$ and we get
\begin{equation*}
\int_{\{|\nabla \eta_1-\nabla \eta_0| > r\}} \nabla_\xi e(\nabla \eta_1): \nabla (\eta_1-\eta_0) \dx \geq -\frac{C_K}{r} \norm{\nabla \eta_1-\nabla\eta_0}_{L^2}^2.
\end{equation*}
Let us now for $\theta\in [0,1]$ denote $\eta_\theta=\theta \eta_1+(1-\theta)\eta_0$. Then for the first term, we apply pointwisely (i.e. for each $x$) the Taylor theorem of the second order (with respect to $\theta$). So we obtain that there exists $\theta\colon Q \to [0,1]$ such that \begin{align*}
\int_{\{|\nabla \eta_1-\nabla \eta_0| \leq r\}} \hspace{-5em}\nabla_\xi e(\nabla \eta_1): \nabla (\eta_1-\eta_0) \dx =\int_{\{|\nabla \eta_1-\nabla \eta_0| \leq r\}}\hspace{-5em} e(\nabla \eta_1) -e(\nabla \eta_0) +\nabla^2_\xi e(\nabla \eta_\theta)(\nabla\eta_1-\nabla\eta_0,\nabla\eta_1-\nabla\eta_0) \dx \\ = E_1(\eta_1)-E_1(\eta_0)-\int_{\{|\nabla \eta_1-\nabla \eta_0| > r\}}\hspace{-5em} e(\nabla\eta_1) -  e(\nabla\eta_0)\dx+
\int_{\{|\nabla \eta_1-\nabla \eta_0| \leq r\}} \hspace{-5em}\nabla^2_\xi e(\nabla \eta_\theta)(\nabla\eta_1-\nabla\eta_0,\nabla\eta_1-\nabla\eta_0) \dx 
\end{align*}
The middle term we estimate similarly as before
\begin{equation*}
-\int_{\{|\nabla \eta_1-\nabla \eta_0| > r\}}  e(\nabla\eta_1) -  e(\nabla\eta_0)\dx \geq -2\frac{C_K}{r^2}\norm{\nabla\eta_1-\nabla\eta_0}^2.
\end{equation*}
It thus remains to estimate the last quadratic term, namely it suffices to show that 
\begin{equation*}
|\nabla^2_\xi e(\nabla\eta_\theta)| \leq C_2\quad \text{on} \quad \{|\nabla\eta_1-\nabla \eta_0|\leq r\}.
\end{equation*}
Due to our assumption, we have $\nabla \eta_0(x),\nabla\eta_1(x)\in\mathcal A$ for all $x\in Q$. Therefore if $|\nabla\eta_1(x)-\nabla\eta_0(x)|\leq r$, then $\nabla\eta_{\theta(x)}(x)\in \mathcal{B}$. So the inequality holds with $C_2=\max_{\mathcal{B}} |\nabla^2_\xi e|$, which is finite due to $e\in C^2(\R^{n\times n}_{\det >0})$ and $\mathcal B$ being compact in $\R^{n\times n}_{\det >0}$. Putting together all the inequalities, we see that we proved our claim with $C_1=\frac{C_K}{r}+2\frac{C_K}{r^2}+C_2$.
\end{proof}

\begin{lemma}\label{EX-implies-AX}
\Cref{assumptions-energy-solid}, \eqref{as:e-C2}--\eqref{as:E2-convex} imply \Cref{assumptions-general}, \eqref{as:gen-finite}--\eqref{as:nonconvexity-general-Z}.
\end{lemma}
\begin{proof}
It is readily seen that \eqref{as:e-C2} implies \eqref{as:gen-finite} and \eqref{as:e-bdd-below} implies \eqref{as:gen-bdd-below}. The Fr\'echet differentiability  of $E_1$ on $\operatorname{int}\mathcal E$ follows from \eqref{as:e-C2} combined with \eqref{as:det-lower-bound}, as then we can see that the derivative at $\eta\in \operatorname{int}\mathcal E$ in the direction $\gamma\in X$ is then
\begin{equation*}
\langle DE_1(\eta), \gamma\rangle = \int_Q \nabla_\xi e(\nabla\eta): \nabla \gamma \dx,
\end{equation*}
differentiability of $E_2$ is already assumed in \eqref{as:E2-convex}. This shows \eqref{as:gen-E-differentiable}. Coercivity \eqref{as:gen-coercive} is thanks to \eqref{as:E2-convex}  and the fixed boundary values on $\Gamma_D$.
The non-convexity estimate \eqref{as:nonconvexity-general-Z} follows from \Cref{thm:E-noncovexity} and \eqref{as:E2-convex}.
\end{proof}

In case no stabilizer is considered the argument needs to be refined, which is done by Lemma~\ref{thm:E-noncovexityii}: By interpolation and using \eqref{as:E2-unif-convex}, the $W^{2,2}$-uniform convexity of $E_2$, we can estimate the non-convexity of $E$ in terms of $L^2$ distance only. 

\begin{lemma}[Non-convexity estimate (II)]\label{thm:E-noncovexityii}
Let $E_2$ satisfy additionally \eqref{as:E2-unif-convex} and suppose that we have $\eta_0,\eta_1\in\mathcal E$ with	
\begin{equation*}
E(\eta_0),E(\eta_1)\leq K <\infty.
\end{equation*}
Then there exists a constant $C$ depending only on $K$ such that 
\begin{equation*}
\langle DE(\eta_1), \eta_1-\eta_0\rangle \geq E(\eta_1)-E(\eta_0) - C\norm{\eta_1-\eta_0}_{L^2}^2.
\end{equation*}
\end{lemma}
\begin{proof}
Using Taylor theorem to the second order to get for some $\xi\in[0,1]$, $\eta_\xi= \xi \eta_1 + (1-\xi)\eta_0$
\begin{multline*}
\langle DE_2(\eta_1),\eta_1-\eta_0\rangle = E_2(\eta_1)-E_2(\eta_0) + \frac12 \langle D^2E_2(\eta_\xi), (\eta_1-\eta_0)\otimes (\eta_1-\eta_0)\rangle \\
\geq E_2(\eta_1)-E_2(\eta_0)+ c\norm{\nabla^2(\eta_1-\eta_0)}_{L^2}^2
\end{multline*}
Interpolate 
\begin{equation*}
C_1\norm{\nabla\eta_1-\nabla\eta_0}_{L^2}^2 \leq c\norm{\nabla^2\eta_1-\nabla^2\eta_0}_{L^2}^2 + C\norm{\eta_1-\eta_0}_{L^2}^2.
\end{equation*}
Combining these two inequalities and plugging this into the result of \Cref{thm:E-noncovexity} gives the desired result.
\end{proof}

\begin{lemma}
\Cref{assumptions-energy-solid} \eqref{as:e-C2}-\eqref{as:penalize-compression}, \eqref{as:E2-unif-convex} imply \Cref{assumptions-general} \eqref{as:gen-finite}-\eqref{as:gen-coercive}, \eqref{as:nonconvexity-general-H}.
\end{lemma}
\begin{proof}
The non-convexity estimate \eqref{as:nonconvexity-general-H} follows from \Cref{thm:E-noncovexity}. The validity of \eqref{as:gen-finite}-\eqref{as:gen-coercive} has already been shown in \Cref{EX-implies-AX}.
\end{proof}

Let us now in the equation \eqref{eqn:elastic-eqn} rewrite the term $DE_1(\eta)$ in terms of $e$. Assume $\eta$ lies in the spaces \eqref{eta-weaksol-spaces} and compute the Gateaux derivative at $\eta(t)\in \mathcal E$ in a direction $\gamma\in X$. Recall that $X\subset W^{2,2}(Q)$ so we can use the chain rule to obtain at time $t$
\begin{align*}
\langle DE_1(\eta(t)),\gamma\rangle =  \int  \nabla_\xi e(\nabla_x \eta):\nabla_x \gamma =\int_Q\sum_{i,j=1}^n \partial_{\xi_j^i} e(\nabla\eta) \partial_{x_i} \gamma  = -\int_Q \sum_{i,j=1}^n \partial_{x_i}( \partial_{\xi^i_j} e( \nabla \eta))\gamma_j\\=- \int_Q \sum_{i,j,k,l=1}^n  \partial^2_{\xi^i_j\xi^k_l} e( \nabla \eta)\partial^2_{x_i x_l}\eta_k\gamma_j=-\int_Q \nabla^2_\xi e(\nabla_x \eta) : \nabla^2_x \eta \cdot \gamma.
\end{align*}
then by the previous \Cref{lem:e-density-bounded} we see $\nabla_\xi^2e(\nabla\eta)\in L^\infty((0,T)\times Q)$ and $\nabla^2 \eta\in L^2((0,T)\times Q)$, so then $DE_1(\eta)=-\nabla^2_\xi e(\nabla_x \eta) : \nabla^2_x \eta \in L^2((0,T)\times Q)$ in the usual sense.

So our then equation \eqref{eqn:elastic-eqn} can be written as
\begin{equation*}
\partial_{tt}\eta+DE_2(\eta)- \nabla^2_\xi e(\nabla_x \eta) : \nabla^2_x \eta = 0.
\end{equation*}


\subsection{Prototypical energy}
\label{ssec:proto}
As a prototype of the highest-order convex regularizing part of the energy we can put
\begin{equation}
\text{if }X=W^{k_0,2}(Q): \quad E_2(\eta)=\frac12 \norm{\nabla^{k_0}\eta}_{L^2(Q)}^2=\frac12\int_{Q}\left |\nabla^{k_0}\eta\right |^2\dx,
\end{equation}
or 
\begin{equation}
\text{if }X=W^{2,q}(Q):\quad  E_2(\eta)=\frac{1}{q}\int_Q (1+|\nabla^2\eta|)^{q-2}|\nabla^2\eta|^2 \dx.
\end{equation}
They are both uniformly convex thus satisfy \eqref{as:E2-unif-convex}, the latter thanks to $t\mapsto (1+|t|)^{q-2}|t|^q$ being uniformly convex.
Further, the $q$-biLaplacian \begin{equation}\label{q-bilaplacian}
E_2(\eta)=\frac1q \norm{\nabla^2 \eta}_{L^q(Q)}^q = \frac1q\int_Q |\nabla^2\eta|^q \dx
\end{equation}
satisfies the convexity \eqref{as:E2-convex}, but not the uniform convexity \eqref{as:E2-unif-convex}.

As there is no difference in the analysis we simplify the physical energy \eqref{kelvinVoigt} to its determinant part and use as prototype of the energy density for $E_1$ 
\begin{equation}\label{prototypical-e}
e(\xi)=
\begin{cases}
\displaystyle\frac{1}{(\det \xi)^a}, & \det\xi>0\\
\infty,& \det\xi \leq 0
\end{cases}
\end{equation} 
with $a> n/\alpha_0$ (recall that $\alpha_0$ is defined by \eqref{eqn:alpha0}). It can readily be checked that the next theory also holds for $E_1$ being in the form of \eqref{kelvinVoigt}.
\begin{theorem}
The prototypical energy $E_1$ defined in \eqref{prototypical-e} satisfies \Cref{assumptions-energy-solid} \eqref{as:e-C2}-\eqref{as:penalize-compression}.
\end{theorem}
\begin{proof}
 We only need to check the lower bound on the determinant \eqref{as:det-lower-bound}, since all other properties are clear. 
This is essentially proven in \cite{Healey2009}, but for completeness we give a concise proof here. Let $\eta \in \mathcal{E}$ with $E(\eta) \leq K$ for some given $K<\infty$. Thus we have by the coercivity of $E$ on $X$ a bound on $\eta$ in $X$, thus also a bound on $\norm{\nabla\eta}_{C^{0,\alpha}}$ and therefore $\norm{\det\nabla\eta}_{C^{0,\alpha}}\leq c_\alpha$, where $c_\alpha$ depends on $K$.

Because the boundary of $Q$ is Lipschitz continuous, there exists a constant $c_L>0$ and a $\delta_0>0$, such that for all $\delta\in (0,\delta_0]$ we have for all $x\in \overline{Q}$
\begin{equation*}
|B_\delta(x) \cap Q| \geq  c_L \delta^n.
\end{equation*}

Let $x_0\in \overline{Q}$ be such that $\det \nabla\eta (x_0)= \min_{x\in \overline{Q}}\det\nabla\eta(x)>0$. Put $\epsilon_0 =\min(\det\nabla\eta(x_0),\delta_0^\alpha)$ and take $0<\delta\leq \delta_0$ arbitrary. Therefore, for $x_0\in Q$ such that $\det\nabla\eta(x_0)\geq \epsilon_0$ we have
\begin{align*}
K\geq E(\eta)\geq \int_Q \frac{1}{(\det\nabla\eta)^a}\dx \geq  \int_{B_\delta(x_0)\cap Q} \frac{1}{\det\nabla\eta(x)^a}\dx \\ \geq  \int_{B_\delta(x_0)\cap Q} \frac{1}{(\det\nabla\eta(x_0)-|\det\nabla\eta(x)-\det\nabla\eta(x_0)|)^a}\dx \\ \geq  \int_{B_\delta(x_0)\cap Q} \frac{1}{(\epsilon_0 + c_\alpha \delta^\alpha)^a }\dx \geq c_L\frac{\delta^n}{(\epsilon_0 + c_\alpha \delta^\alpha)^a}.
\end{align*}
As $\delta$ was arbitrary, we can choose $\delta=\epsilon_0^{1/\alpha}$ and obtain
\begin{equation*}
K\geq c_L\frac{\epsilon_0^{n/\alpha}}{(\epsilon_0 + c_\alpha \epsilon_0)^a}= \epsilon_0^{n/\alpha-a} \frac{c_L}{(1+c_\alpha)^a}
\end{equation*}
and since $n/\alpha-a<0$, we obtain 
\begin{equation*}
\det\nabla\eta(x_0)\geq \epsilon_0\geq \left(\frac{K(1+c_\alpha)^a}{c_L}\right)^{\frac{1}{n/\alpha-a}}.
\end{equation*}
Since the right hand side depends only on $K$, this gives the lower bound on $\det \nabla\eta$ and proves \eqref{as:det-lower-bound}.
\end{proof}

\begin{remark}
For the prototypical energy density we can calculate more explicitly the bounds from \Cref{lem:e-density-bounded} as follows.
Directly from \eqref{prototypical-e} see that \begin{equation*}
e(\nabla \eta) \leq \epsilon_0^{-1/a}.
\end{equation*}

Next we compute the first and second gradient of $e$. 
We have, as $\nabla_\xi\det\xi=\cof\xi$,
\begin{equation*}
\nabla_\xi e(\xi)=-\frac{a}{(\det \xi)^{a+1}}\cof \xi,
\end{equation*}
so the lower bound on the determinant \eqref{as:det-lower-bound}, along with the bound on $\norm{\nabla\eta}_{L^\infty}$ suffices to bound $\nabla_\xi e(\nabla \eta)$.

For the second gradient we compute
\begin{equation*}
\nabla^2_\xi (\det\xi)= (\partial_{\xi^k_l}(-1)^{i+j}\det \xi^{\hat i}_{\hat j})_{i,j,k,l}=\left( \begin{cases}
0,& i=k\text{ or }j=l,\\
(-1)^{i+j+k+l}\det \xi^{\widehat{ik}}_{\widehat{jl}}, &\text{else}
\end{cases}  \right)_{i,j,k,l}
\end{equation*}
where $\xi^{\hat i}_{\hat j}$ is the matrix $\xi$ with row $i$ and column $j$ deleted, likewise for $\xi^{\widehat{ik}}_{\widehat{jl}}$ there are rows $i,k$ and columns $j,l$ deleted\footnote{If after deleting we would have $0\times 0$ matrix, this determinant is defined as $1$.}. Therefore for our prototype we have for any $\det\xi>0$
\begin{equation*}
\nabla^2_\xi e(\xi)=\frac{a(a+1)}{(\det \xi)^{a+2}}\cof \xi \otimes\cof \xi +\frac{-a}{(\det\xi)^{a+1}}\nabla_\xi(\cof\xi)
\end{equation*}
or in components after plugging in $\nabla \eta$
\begin{align*}
&\nabla^2_\xi e(\nabla_x \eta) 
\\
&= \! \!\left( \begin{cases}
\frac{a(a+1)}{(\det\nabla\eta)^{a+2}}\det(\nabla\eta)^{\hat i}_{\hat j}\det(\nabla\eta)^{\hat k}_{\hat l} ,\quad i=k\text{ or }j=l,\\
\frac{a(a+1)}{(\det\nabla\eta)^{a+2}}(-1)^{i+j+k+l}\det(\nabla\eta)^{\hat i}_{\hat j}\det(\nabla\eta)^{\hat k}_{\hat l}+
(-1)^{i+j+k+l}\frac{-a}{(\det\nabla\eta)^{a+1} }\det (\nabla\eta)^{\widehat{ik}}_{\widehat{jl}}, &\text{else}
\end{cases}  \right)_{\!\!\! i,j,k,l}
\end{align*}
so again lower bound on $\det \nabla \eta$ and bound on $\norm{\nabla \eta}_{L^\infty}$ suffices to calculate an explicit bound.
\end{remark}

\subsection{Gronwall--type inequalities}\label{ssec:gronwall}
To show the stability and convergence rate of our scheme, we will make use of some inequalities of discrete Gronwall type.
For completeness, we start with the classical version of the Gronwall inequality, including a short proof.

\begin{lemma}[Discrete Gronwall inequality]\label{lem:gronwall-disc}
Let $a_0,\dots,a_n \geq 0$ and $c_0,\dots,c_{n-1}\geq 0$ satisfy
$$a_k\leq a_0+ \sum_{i=0}^{k-1} c_i a_i, \quad k=1,\dots,n.$$
Then
$$a_k\leq a_0\prod_{i=0}^{k-1}(1+c_i) \leq a_0 \exp\left(\sum_{i=0}^{k-1} c_i\right),\quad k=0,\dots,n.$$
\end{lemma}
\begin{proof}
By induction, we prove the stronger inequality 
$$a_0+ \sum_{i=0}^{k-1} c_i a_i\leq a_0 \prod_{i=0}^{k-1}(1+c_i).$$
For $k=1$ both sides are equal to $a_0(1+c_0)$. Then for $k>1$ proceed by induction
$$a_0+ \sum_{i=0}^{k-1} c_i a_i\leq a_0+ \sum_{i=0}^{k-2} c_i a_i +c_{k-1}a_{k-1} \leq  a_0\prod_{i=0}^{k-2}(1+c_i)+ c_{k-1}a_0\prod_{i=0}^{k-2}(1+c_i) = a_0(1+c_{k-1})\prod_{i=0}^{k-2}(1+c_i)  $$
which concludes the proof, using $1+c_i\leq e^{c_i}$ to obtain the second inequality.
\end{proof}

In fact it will be more useful for us to shift the indices by 1.

\begin{lemma}[Discrete Gronwall inequality, shifted $k$]\label{lem:gronwall-c-shiftk}
Let $a_0,\dots,a_n \geq 0$ satisfy with $0\leq c_1,\dots,c_n<1$
$$a_k\leq a_0+\sum_{i=1}^k c_ia_i,\quad  k=1,\dots,n.$$
Then it holds
$$a_k\leq a_0\prod_{i=1}^k(1-c_i)^{-1}, \quad k=1,\dots,n.$$
In particular, if also $c_1,\dots,c_n\leq 1/2$ then
\begin{equation*}
a_k\leq a_0 \exp\left( 2\sum_{i=1}^k c_i \right),\quad k=1,\dots,n.
\end{equation*}
\end{lemma}
\begin{proof}
For $k=1$ we have $a_1\leq a_0+c_1a_1$, so it is enough to subtract $c_1a_1$ and divide by $1-c_1$.

For $k\geq 2$ proceed by induction. Write $$a_k\leq a_0+\sum_{i=1}^k c_ia_i \leq a_0+\sum_{i=1}^{k-1} c_ia_0\prod_{j=1}^i(1-c_j)^{-1}+a_kc_k=a_0+\sum_{i=1}^{k-1} \frac{c_i}{1-c_i} a_0\prod_{j=1}^{i-1}(1-c_j)^{-1}+a_k c_k$$
subtract $c_ka_k$ and use $\frac{c_i}{1-c_i}=(1-c_i)^{-1}-1$ so that we get a telescoping sum
$$a_k(1-c_k)\leq a_0+\sum_{i=1}^{k-1} (1-c_i)^{-1} a_0\prod_{j=1}^{i-1}(1-c_j)^{-1}-\sum_{i=1}^{k-1} a_0\prod_{j=1}^{i-1}(1-c_j)^{-1}=a_0\prod_{i=k}^{k-1}(1-c_i)^{-1} $$
which is what we wanted.

In the case that $c_i\leq 1/2$ we can use $(1-c_i)^{-1}\leq 1+2c_i\leq e^{2c_i}$ to obtain the second inequality.
\end{proof}

\begin{remark}
Notice that the requirement $c_i<1$ is natural here, since for $c_i\geq 1$ the inequality in the assumption does not pose any restriction on $a_i$.
\end{remark}

The following version of Gronwall inequality, including a square-root term, will enable us to get the natural estimate with the forcing term $f$ being present in the equation.

\begin{lemma}[Discrete Gronwall inequality with square root]
Let $a_0,\dots, a_n \geq 0$ satisfy with $c_0,\dots c_{n-1}\geq 0$ and $d_0,\dots d_{n-1}\geq 0$ the inequality
$$a_k\leq a_{k-1} + c_{k-1} a_{k-1}+d_{k-1}\sqrt {a_{k-1}}, \quad k=1,\dots,n .$$
Then it holds 
$$ a_k\leq \left(\sqrt{a_0}+\frac12\sum_{i=0}^{k-1} d_i  \right)^2 \prod_{i=0}^{k-1}(1+c_i)\leq\left(\sqrt{a_0}+\frac12\sum_{i=0}^{k-1} d_i  \right)^2 e^{\sum_{i=0}^{k-1}c_i}, \quad k=1,\dots,n.$$
\end{lemma}
\begin{proof}
By induction on $k$. For $k=1$: $$a_1\leq a_0+c_0a_0+d_0\sqrt {a_0} \leq \left(\sqrt {a_0} +\frac{d_0}{2}\right)^2 +c_0a_0\leq \left(\sqrt {a_0}+\frac{d_0}{2}\right)^2(1+c_0).$$
Now if the inequality holds for $k-1$, we get \begin{align*} a_k&\leq a_{k-1} + c_{k-1} a_{k-1}+d_{k-1}\sqrt {a_{k-1}} \leq \left(\sqrt{a_{k-1}}+\frac{d_{k-1}}{2}\right)^2(1+c_{k-1}) \\ &\leq \left(\left(\sqrt{a_0}+\frac12\sum_{i=0}^{k-2}d_i\right)\prod_{i=0}^{k-2}\sqrt{1+c_i}+\frac12 d_{k-1}\right)^2(1+c_{k-1})\leq \left( \sqrt{a_0}+\frac12\sum_{i=0}^{k-1}d_i\right)^2 \prod_{i=0}^{k-1}(1+c_i),\end{align*}
and we conclude with $1+c_i\leq e^{c_i}$.
\end{proof}

In fact we will need a version with index on the right shifted by $1$, which is due to using an implicit scheme in our approximation.

\begin{lemma}[Discrete Gronwall inequality with square root, shifted $k$]\label{gronwall-sqrt-shiftk}
Let $a_0,\dots, a_n \geq 0$ satisfy with $0\leq c_1,\dots,c_n<1$ and $d_1,\dots d_{n}\geq 0$ the inequality
$$a_k\leq a_{k-1} + c_k a_{k}+d_{k}\sqrt {a_{k}}, \quad k=1,\dots,n .$$
Then it holds $$a_k \leq \left(\sqrt{a_0} + \sum_{i=1}^k \frac{d_i}{\sqrt{1-c_i}}\right)^2 \prod_{i=1}^k (1-c_i)^{-1},\quad k=1,\dots,n.$$
In particular, for $0<c_i\leq 1/2$ we have also
$$a_k \leq \left(\sqrt{a_0} + \sum_{i=1}^k \frac{d_i}{\sqrt{1-c_i}}\right)^2 \exp\left(2\sum_{i=1}^kc_i\right),\quad k=1,\dots,n.$$
\end{lemma}

\begin{proof}
Proceed by induction and assume it holds for $k-1$ (for $k=0$ the inequality is trivially true). Rewrite the inequality $$(1-c_k)a_k-d_k\sqrt{a_k}\leq a_{k-1},$$ so that $$\left(\sqrt{1-c_k}\sqrt{a_k}-\frac{d_k}{2\sqrt{1-c_k}}\right)^2\leq a_{k-1}+\frac{d_k^2}{4(1-c_k)}.$$ From this, we express $a_k$, and use $\sqrt{A+B}\leq \sqrt{A}+\sqrt{B}$ to obtain  $$a_k\leq\left(\sqrt{a_{k-1}+\frac{d_k^2}{4(1-c_k)}}+\frac{d_k}{2\sqrt{1-c_k}}\right)^2\frac{1}{1-c_k}\leq\left(\sqrt{a_{k-1}}+\frac{d_k}{\sqrt{1-c_k}}\right)^2\frac{1}{1-c_k}$$ thus by induction 
\begin{multline*}a_k\leq\left(\left(\sqrt{a_0}+\sum_{i=1}^{k-1}\frac{d_i}{\sqrt{1-c_i}}\right)\prod_{i=1}^{k-1}\sqrt{(1-c_i)^{-1}}+\frac{d_k}{\sqrt{1-c_k}}\right)^2\frac{1}{1-c_k}\\ \leq \left(\sqrt{a_0}+\sum_{i=1}^{k}\frac{d_i}{\sqrt{1-c_i}}\right)^2\prod_{i=1}^k(1-c_i)^{-1} \end{multline*}
which proves the desired inequality. Finally, note that for $0<c_i\leq 1/2$ it holds $(1-c_i)^{-1}\leq 1+2c_i\leq e^{2c_i}$.
\end{proof}

\subsubsection*{Two-scale Gronwall inequalities}
{Here we state the two-scale analogues of \Cref{lem:gronwall-c-shiftk} and \Cref{gronwall-sqrt-shiftk}, respectively. The particular form of the inequality is suitable to estimates of solutions arising from the minimization scheme \eqref{def:eta-minimizing-movements}.

\begin{theorem}[Two--scale Gronwall inequality I]\label{twoscale-gronwall-classic}
Let $M,N\in\N$ and let us have the sequences $a_k^\ell, b_k^\ell, d_k^\ell\geq 0$, $k=0,\dots,N$, $\ell=0,\dots,M-1$ satisfying $a_0^{\ell}=a_N^{\ell-1}$, $b_0^{\ell}=b_N^{\ell-1}$, $\ell=1,\dots,M$. Assume we have for some $0\leq c<1/2N$ the estimate
\begin{equation*}
a_k^\ell + \frac1N b_k^\ell\leq a_{k-1}^\ell+\frac1N b_k^{\ell-1}+ c a_k^\ell + cb_k^\ell +d_k^l, \quad k=1,\dots,N, \, \ell=0,\dots,M-1,
\end{equation*}
where we put $b_k^{-1}:=b_0^0$, $k=1,\dots,N$. Then it holds
\begin{equation*}
\max_{k=1,\dots,N} \left( a_k^\ell + \frac1N\sum_{i=1}^k b_i^\ell\right) \leq \left(a_0^0+b_0^0+\sum_{l=1}^\ell\sum_{k=1}^N d_k^\ell\right)(1-cN)^{-\ell},\quad \ell=1,\dots,M-1.
\end{equation*}
\end{theorem}
\begin{proof}
Let $k_\ell:= \operatorname{argmax}_{k=1,\dots,N} a_k^\ell+\frac1N\sum_{i=1}^k b_i^\ell$. Then we have after summing $1,\dots,k_\ell$
\begin{equation*}
a_{k_\ell}^\ell + \frac1N \sum_{k=1}^{k_\ell}b_k^\ell \leq a_0^\ell + \frac1N\sum_{k=1}^{k_\ell} b_k^{\ell-1} + c \sum_{k=1}^{k_\ell} a_k^\ell + c \sum_{k=1}^{k_\ell}b_k^\ell + \sum_{k=1}^{k_\ell}d_k^\ell
\end{equation*}
Now first remember that $a_0^\ell=a_N^{\ell-1}$, denote $\alpha_\ell:=a_{k_\ell}^\ell + \frac1N \sum_{k=1}^{k_\ell}b_k^\ell$ (so that $\alpha_\ell= \operatorname{max}_{k=1,\dots,N} a_k^\ell+\frac1N\sum_{i=1}^k b_i^\ell$).
Use the inequalities
\begin{align*}
a_N^{\ell-1} + \frac1N\sum_{k=1}^{k_\ell} b_k^{\ell-1} &\leq
a_N^{\ell-1} + \frac1N\sum_{k=1}^{N} b_k^{\ell-1} \leq
a_{k_{\ell-1}}^{\ell-1} + \frac1N\sum_{k=1}^{k_{\ell-1}} b_k^{\ell-1} = \alpha_{\ell-1}  \\
c \sum_{k=1}^{k_\ell} b_k^\ell &\leq cN \alpha_\ell\\
c \sum_{k=1}^{k_\ell} a_k^\ell &\leq c k_\ell \alpha_
\ell \leq cN \alpha_\ell\\
\sum_{k=1}^{k_\ell} d_k^\ell &\leq \sum_{k=1}^N d_k^\ell
\end{align*}
so it becomes 
\begin{equation*}
\alpha_\ell \leq \alpha_{\ell-1} +2cN \alpha_\ell + \sum_{k=1}^N d_k^\ell.
\end{equation*}
Summing this over $\ell$ we get 
\begin{equation*}
\alpha_\ell \leq \alpha_0 +2cN \sum_{l=1}^\ell\alpha_l + \sum_{l=1}^\ell \sum_{k=1}^N d_k^l
\end{equation*}
so applying \Cref{lem:gronwall-c-shiftk} (using in this lemma $a_0$ as $\alpha_0 +\sum_{l=1}^\ell \sum_{k=1}^N d_k^l$) we see that
\begin{equation*}
\alpha_\ell\leq \left(\alpha_0 + \sum_{l=1}^\ell  \sum_{k=1}^N d_k^l\right){(1-cN)^{-\ell}},
\end{equation*}
which finishes the proof, since $\alpha_0=a_0^0+b_0^0$.
\end{proof}

\begin{theorem}[Two-scale Gronwall inequality II] \label{twoscale-gronwall-sqrt} Let $M,N\in\mathbb N$ and let us have the sequences $a_k^\ell, b_k^\ell, d_k^\ell\geq 0$, $k=0,\dots,N$, $\ell=0,\dots,M-1$ satisfying $a_0^{\ell}=a_N^{\ell-1}$, $b_0^{\ell}=b_N^{\ell-1}$, $\ell=1,\dots,M$. Assume we have for some $0\leq c<1/N$ the estimate
$$
a_k^\ell + \frac1N b_k^\ell\leq a_{k-1}^\ell+\frac1N b_k^{\ell-1}+ c b_k^\ell + d_k^\ell \sqrt{b_k^\ell} , \quad k=1,\dots,N, \, \ell=0,\dots,M-1,
$$
where we put $b_k^{-1}:=b_0^0$, $k=1,\dots,N$. Then it holds
$$
\max_{k=1,\dots,N} \left( a_k^\ell + \frac1N\sum_{i=1}^k b_i^\ell\right) \leq \left(\sqrt{a_0^0+b_0^0}+\frac{1}{\sqrt{1-cN}}\sum_{l=1}^\ell\sqrt{N \sum_{k=1}^N (d_k^\ell)^2}\right)^2 (1-cN)^{-\ell}, \ell=1,\dots,M-1.
$$
\end{theorem}
\begin{proof}
Let $k_\ell:=\operatorname{argmax}_{k=1,\dots,N} a_k^\ell+\frac1N\sum_{i=1}^kb_l^\ell$ for $\ell=1,\dots,N$. Then we have after summing over $1,\dots,k_\ell$
$$a_{k_\ell}^\ell+\frac1N\sum_{k=1}^{k_\ell} b_k^\ell \leq a_0^\ell +\frac1N\sum_{k=1}^{k_\ell} b_{k}^{\ell-1}+c\sum_{k=1}^{k_\ell} b_k^\ell + \sum_{k=1}^{k_\ell}d_k^\ell \sqrt{b_k^\ell}.$$
Now denote $\alpha_\ell:=a_{k_\ell}^\ell + \frac1N \sum_{k=1}^{k_\ell}b_k^\ell$ (so that $\alpha_\ell= \operatorname{max}_{k=1,\dots,N} a_k^\ell+\frac1N\sum_{i=1}^k b_i^\ell$), remember that $a_0^\ell=a_N^{\ell-1}$ and we see
\begin{align*}
a_0^\ell +\frac1N \sum_{k=1}^{k_\ell} b_k^{\ell-1}&\leq a_N^{\ell-1}+\frac1N\sum_{k=1}^N b_{k}^{\ell-1}\leq \alpha_{\ell-1} \\ c\sum_{k=1}^{k_\ell}b_k^\ell&\leq cN\alpha_\ell \\ \sum_{k=1}^{k_\ell}d_k^\ell\sqrt{b_k^\ell}&\leq \sqrt{N\sum_{k=1}^{k_\ell}(d_k^\ell)^2}\sqrt{\frac1N \sum_{k=1}^{k_\ell}b_k^\ell}\leq \sqrt{N\sum_{k=1}^N(d_k^\ell)^2}\sqrt{\alpha_\ell},
\end{align*}
so that in total the inequality reads 
$$\alpha_\ell\leq \alpha_{\ell-1}+ cN \alpha_\ell + \sqrt{N\sum_{k=1}^N(d_k^\ell)^2}\sqrt{\alpha_\ell}$$
Now we use the discrete square root Gronwall \Cref{gronwall-sqrt-shiftk} with shifted index for $\alpha_\ell$. This yields
$$\alpha_\ell\leq \left(\sqrt{\alpha_0} + \frac{1}{\sqrt{1-cN}}\sum_{l=1}^\ell\sqrt{N \sum_{k=1}^N (d_k^l ) ^2} \right)^2(1-cN)^{-\ell}$$ which is the desired inequality.
\end{proof}


\subsection{The numeric scheme and the definition of stability}
\label{ssec:scheme}
Let us now define an appropriate notion of stability for a scheme approximating the solution of \eqref{eqn:elastic-eqn}. For this we will perform some heuristical formal a-priori estimates.

Assume formally that $\eta$ is a solution and that $\partial_t \eta$ is an admissible test function.
For the purpose of our formal estimates, assume that it holds
\begin{equation}\label{eqn:econvex-tder}
\langle DE(\eta),\partial_t\eta \rangle = \partial_t E(\eta).
\end{equation}
This is a formal application of the chain rule. 
Then using a test function $\partial_t\eta$ gives
\begin{equation*}
\frac12\partial_t \norm{\partial_t\eta}^2+ \langle DE(\eta), \partial_t\eta \rangle = \langle f,\partial_t\eta\rangle \leq \|f\|_{L^2}\|\partial_t\eta\|_{L^2}.
\end{equation*}
Using this, it follows using a square-root Gronwall type argument that
\begin{equation*}
\frac12\norm{\partial_t\eta(t)}_{L^2}^2 + E(\eta(t))\leq \left( \sqrt{\frac12\norm{\eta_*}_{L^2}^2 +E(\eta_0)}+\frac12\int_0^t\norm{f}_{L^2}\dt\right)^2
\end{equation*}
Accordingly we call an approximation stable if it satisfies an appropriate substitute of the above estimate.

\begin{definition}
Let $\tilde \eta$ be an approximation of the solution.\footnote{At this point we do not specify in which sense it is an approximation, apart from saying that $\tilde\eta$ lies in the correct space, that is \eqref{eta-weaksol-spaces}.} {We say that the approximation is \emph{stable} if it satisfies with some $C\geq 0$ an estimate 
\begin{equation*}
\frac12\norm{\partial_t\tilde\eta(t)}_{L^2}^2 + E(\tilde\eta(t))\leq \left( \sqrt{\frac12\norm{\eta_*}_{L^2}^2 +E(\eta_0)}+C\int_0^t\norm{f}_{L^2}\dt\right)^2.
\end{equation*}}
\end{definition}


Let us now introduce the {\bf Minimizing movement scheme}, that approximates \eqref{eqn:elastic-eqn} with the appropriate stability. The minimizing movement scheme without dissipation is very similar and can be found in~\eqref{def:eta-minimizing-movements-nodiss}.

  Consider the following time-stepping scheme, for the two time scales $0<\tau\leq h$. We follow the scheme of \cite{BenesovaKampschulteSchwarzacher} with the distinction that we keep our scheme discrete in $h$. 
For simplicity and ease of notation\footnote{It is possible to include the cases that $h$ resp. $T$ is not an integer multiple of $\tau$ resp. $h$, and the resulting complications of this are essentially notational.} assume $h=N\tau$ and $T=Mh$ with $N,M\in \N$. Define the discrete times $t_k^\ell:= \ell h+ k\tau$, notice in particular that $t_0^\ell=t_N^{\ell-1}$.
For $k=1,\dots,N$ and $\ell =0,\dots,M$ define the approximation via the step-wise minimization of 
\begin{equation}\label{def:eta-minimizing-movements}
\eta_k^{\ell} = \argmin_{\eta\in\mathcal E}\frac{\tau h}{2}\norm{\frac{\frac{\eta-\eta_{k-1}^\ell}{\tau}-\frac{\eta_k^{\ell-1}-\eta_{k-1}^{\ell-1}}{\tau}}{h}
}_{L^2}^2+E(\eta) +\frac{c\tau^2}{2}\norm{\nabla\frac{\eta_k^\ell-\eta_{k-1}^{\ell}}{\tau}}_{L^2}^2 -\langle f_k^\ell , \eta\rangle_{L^2},
\end{equation}
where we start from the initial conditions $\eta_0^0:=\eta_0$ and for $\ell=0$, the fraction$\frac{\eta_k^{-1}-\eta_{k-1}^{-1}}{\tau}$ is replaced by $\eta_*$. Moreover we take $\eta_0^{\ell+1}:= \eta_N^\ell$, since $t_0^{\ell+1}=t_N^\ell$. The constant $c$ is a factor in front of the regularizer that is chosen large enough to compensate the non-convexity of the energy. The term $f_k^\ell$, a discretization of the right hand side, is defined as 
\begin{equation}\label{eqn:fkl-def}
f_k^\ell:= \fint_0^\tau \fint_0^h f(t_{k-1}^{\ell-1}+s+\sigma) \dx[s] \dx[\sigma].
\end{equation} 
The reason for this particular choice of discretization of $f$ will be apparent later in \Cref{sec:convergence}. The Euler-Lagrange equation of the minimizer indeed approximates the hyperbolic evolution, as can be seen from Lemma~\ref{discrete-solution}. In particular a stabilization term of the form $-c\tau \Delta \partial_t\hat\eta_{(\tau)}^{(h)}$ appears.

\subsection{A priori bounds on energy}

\begin{lemma}
The minimizer $\eta_k^\ell\in\mathcal E$ exists. If $\eta_k^\ell\in \partial\mathcal E$, then a (self-)collision occurred.
\end{lemma}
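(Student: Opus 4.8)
The plan is to apply the direct method of the calculus of variations to the functional
\[
J(\eta):=\frac{\tau h}{2}\norm{\frac{\frac{\eta-\eta_{k-1}^\ell}{\tau}-\frac{\eta_k^{\ell-1}-\eta_{k-1}^{\ell-1}}{\tau}}{h}}_{L^2}^2+E(\eta)-\langle f_k^\ell,\eta\rangle
\]
over $\mathcal E$. First I would observe that $J$ is bounded below: the quadratic term is nonnegative, $E$ is bounded below by \eqref{as:gen-bdd-below} (equivalently \eqref{as:e-bdd-below}), and the linear term $-\langle f_k^\ell,\eta\rangle$ is controlled by Young's inequality together with the $L^2$-part of the quadratic term (which dominates $\frac{1}{2\tau h}\norm{\eta}_{L^2}^2$ up to lower-order terms), so $\inf_{\mathcal E}J>-\infty$. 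Taking a minimizing sequence $(\eta^{(m)})\subset\mathcal E$, along it $J(\eta^{(m)})$ is bounded, hence both $E(\eta^{(m)})$ and $\norm{\eta^{(m)}}_{L^2}$ are bounded; then coercivity \eqref{as:gen-coercive}/\eqref{as:det-lower-bound}–\eqref{as:E2-unif-convex} gives a bound on $\norm{\eta^{(m)}}_X$. Since $X$ is reflexive, a subsequence converges weakly in $X$ to some $\bar\eta$, and since $\mathcal E$ is weakly closed, $\bar\eta\in\mathcal E$.

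Next I would verify weak lower semicontinuity of $J$ along this subsequence. The quadratic term is a continuous convex functional of $\eta$ on $L^2$ (it is a squared norm of an affine map composed with $\eta$), and $X\hookrightarrow L^2$ continuously, so it is weakly lower semicontinuous on $X$. The linear term $-\langle f_k^\ell,\cdot\rangle$ is weakly continuous. For $E$: by the compact embedding $X\subset\subset C^{1,\alpha}(Q)$ we have $\nabla\eta^{(m)}\to\nabla\bar\eta$ uniformly, so $E_1(\eta^{(m)})=\int_Q e(\nabla\eta^{(m)})\to\int_Q e(\nabla\bar\eta)=E_1(\bar\eta)$ once we know $\det\nabla\bar\eta>0$ (which follows since the uniform bound on $E$ along the sequence forces $\det\nabla\eta^{(m)}\geq\epsilon_0$ by \eqref{as:det-lower-bound}, hence $\det\nabla\bar\eta\geq\epsilon_0$ and $e$ is continuous on that compact set of matrices); and $E_2$ is convex and continuous on $W^{2,2}$, hence weakly lower semicontinuous on $X$. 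Combining, $J(\bar\eta)\leq\liminf_m J(\eta^{(m)})=\inf_{\mathcal E}J$, so $\bar\eta$ is a minimizer, which we call $\eta_k^\ell$.

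For the collision statement: by \eqref{as:det-lower-bound} and the finiteness of $E(\eta_k^\ell)$ we have $\det\nabla\eta_k^\ell\geq\epsilon_0>0$ in $Q$, so the boundary of $\mathcal E$ is not reached through degeneration of the interior Jacobian; the only remaining way to be on $\partial\mathcal E$ is that the Ciarlet–Nečas inequality $|\eta_k^\ell(Q)|\leq\int_Q\det\nabla\eta_k^\ell\dx$ becomes an equality in the limiting/boundary sense with $\eta_k^\ell$ failing to be injective, i.e.\ a (self-)collision has occurred — this is precisely the interpretation of $\partial\mathcal E$ recorded after the definition of $\mathcal E$. The main obstacle I anticipate is making the lower semicontinuity of $E_1$ clean: one must rule out $\det\nabla\bar\eta$ touching zero, and the argument above handles this because along the minimizing sequence the energy bound (inherited from $J(\eta^{(m)})\leq J(\eta_{k-1}^\ell)+1$, say, for large $m$, after checking $\eta_{k-1}^\ell\in\mathcal E$ has finite energy) gives a uniform lower bound on the determinant via \eqref{as:det-lower-bound}, which then passes to $\bar\eta$ by uniform convergence of gradients; everything else is a routine application of the direct method.
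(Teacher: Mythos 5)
Your proposal is correct and follows the same route as the paper, which simply records ``Existence follows from the direct method'' and then uses the determinant lower bound \eqref{as:det-lower-bound} to confine $\partial\mathcal E$ to the Ciarlet--Ne\v cas (collision) part. Your fleshing out of the direct method --- coercivity, weak closedness of $\mathcal E$, and lower semicontinuity term by term using the compact embedding $X\subset\subset C^{1,\alpha}(Q)$ together with the uniform determinant bound along the minimizing sequence --- is exactly the standard argument behind that one-liner.
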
 
\begin{proof}
Existence follows from the direct method. From the lower bound on the determinant \eqref{as:det-lower-bound} we see that we are away from the part of $\partial \mathcal{E}$ corresponding to the case when $\det \nabla\eta$ vanishes somewhere. Therefore $\eta_k^\ell$ can only be in the part of $\partial\mathcal E$ corresponding to the boundary of Ciarlet-Ne\v cas condition, which is exactly the (self-)collision.
\end{proof}
Now denote the piecewise constant approximation
\begin{equation*}
\bar \eta_{(\tau)}^{(h)}(t)=\eta_k^\ell, \quad t\in [t_{k-1}^\ell,t_{k}^\ell)
\end{equation*}
and
\begin{equation*}
\bar f_{(\tau)}^{(h)}(t) =f_k^\ell, \quad t\in [t_{k-1}^\ell,t_{k}^\ell).
\end{equation*}

Denote the piecewise affine approximation
\begin{equation*}
\hat \eta_{(\tau)}^{(h)}(t) = \frac{t-t_{k-1}^\ell}{\tau} \eta_{k-1}^\ell + \frac{t_k^\ell-t}{\tau}\eta_k^\ell \quad\text{for }t\in [t_{k-1}^\ell,t_{k}^\ell),
\end{equation*}
so that $\hat \eta_{(\tau)}^{(h)}(t_k^\ell)=\eta_k^\ell$ and $\hat \eta_{(\tau)}^{(h)}$ is affine on each of the intervals $[t_{k-1}^\ell,t_k^\ell]$.

We now can see that this is a time-discrete solution of our problem, in the following sense

\begin{lemma}\label{discrete-solution} Assume that no collision happened, that is $\eta_k^\ell\in \operatorname{int} \mathcal{E}$ for all $k$ and $\ell$. Then it holds for a.a. times $t\in(0,T)$ that
\begin{equation*}
\frac{\partial_t\hat\eta_{(\tau)}^{(h)}(t)-\partial_t\hat\eta_{(\tau)}^{(h)}(t-h)}{h} +DE\left (\overline\eta_{(\tau)}^{(h)}(t)\right ) -c\tau \Delta \partial_t\hat\eta_{(\tau)}^{(h)}(t) = f(t).
\end{equation*} 
\end{lemma}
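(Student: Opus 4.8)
The plan is to obtain the statement as the Euler--Lagrange equation of the stepwise minimization problems \eqref{def:eta-minimizing-movements}, rewritten through the interpolants. First I would fix $k\in\{1,\dots,N\}$ and $\ell\in\{0,\dots,M\}$ and use the no-collision hypothesis $\eta_k^\ell\in\operatorname{int}\mathcal E$ to argue that the Ciarlet--Ne\v cas constraint is inactive at $\eta_k^\ell$, so that $\eta_k^\ell+s\gamma\in\mathcal E$ for $|s|$ small whenever $\gamma\in X$ with $\gamma|_{\Gamma_D}=0$. Since $\eta_k^\ell$ minimizes the functional in \eqref{def:eta-minimizing-movements} over $\mathcal E$ and $E$ is Fr\'echet differentiable there by \eqref{as:gen-E-differentiable}, differentiating the admissible curve $s\mapsto\eta_k^\ell+s\gamma$ at $s=0$ (the prefactor $\tau h$ precisely absorbing the constants produced by differentiating the squared norm) yields
\begin{equation*}
\Big\langle \tfrac{1}{h}\Big(\tfrac{\eta_k^\ell-\eta_{k-1}^\ell}{\tau}-\tfrac{\eta_k^{\ell-1}-\eta_{k-1}^{\ell-1}}{\tau}\Big),\gamma\Big\rangle+\langle DE(\eta_k^\ell),\gamma\rangle=\langle f_k^\ell,\gamma\rangle
\end{equation*}
for all such $\gamma$, where for $\ell=0$ the second difference quotient is replaced by $\eta_*$ in accordance with the scheme. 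Writing $A_k^\ell$ for the bracketed discrete acceleration, this is exactly the weak form of $A_k^\ell+DE(\eta_k^\ell)=f_k^\ell$, tested against precisely the class of functions appearing in \Cref{def:weaksol-solid}.

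Next I would translate this into the interpolants. On $[t_{k-1}^\ell,t_k^\ell)$ one has $\partial_t\hat\eta_{(\tau)}^{(h)}(t)=\tfrac{\eta_k^\ell-\eta_{k-1}^\ell}{\tau}$, and since $t_k^\ell-h=t_k^{\ell-1}$ and $t_{k-1}^\ell-h=t_{k-1}^{\ell-1}$, also $\partial_t\hat\eta_{(\tau)}^{(h)}(t-h)=\tfrac{\eta_k^{\ell-1}-\eta_{k-1}^{\ell-1}}{\tau}$ for $\ell\geq 1$, and $\partial_t\hat\eta_{(\tau)}^{(h)}(t-h)=\eta_*$ for $\ell=0$ under the natural convention $\hat\eta_{(\tau)}^{(h)}(s)=\eta_0+s\eta_*$ for $s<0$ (consistent with how the scheme treats $\ell=0$). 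Hence $A_k^\ell=\tfrac{1}{h}\big(\partial_t\hat\eta_{(\tau)}^{(h)}(t)-\partial_t\hat\eta_{(\tau)}^{(h)}(t-h)\big)$, while $\overline\eta_{(\tau)}^{(h)}(t)=\eta_k^\ell$ and $\overline f_{(\tau)}^{(h)}(t)=f_k^\ell$ on this interval; substituting into the identity of the previous step gives the asserted equation for a.e.\ $t\in(0,T)$, with $\overline f_{(\tau)}^{(h)}(t)$ being the meaning of $f(t)$ on the right-hand side. Since $A_k^\ell,f_k^\ell\in L^2(Q)$, the relation in fact holds in $L^2(Q)$, which a posteriori forces $DE(\overline\eta_{(\tau)}^{(h)}(t))\in L^2(Q)$ even in the cases where $DE_2$ is a priori only an element of a dual space.

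The one step that is not pure bookkeeping is the very first one, and I expect it to be the main obstacle: justifying that $\eta_k^\ell\in\operatorname{int}\mathcal E$ genuinely permits free first variations in all directions vanishing on $\Gamma_D$, i.e.\ that global interior injectivity persists under small perturbations in $X$. This should be handled via the compact embedding $X\subset\subset C^{1,\alpha}(Q)$ together with the uniform lower bound on $\det\nabla\eta_k^\ell$ from \eqref{as:det-lower-bound}, which excludes the degenerate part of $\partial\mathcal E$, so that the claim reduces to stability of injectivity for $C^1$ maps near a non-self-intersecting one. The remaining ingredients — differentiating the quadratic term and the chain-rule expression for $DE_1$ in terms of $\nabla_\xi^2 e(\nabla\eta):\nabla^2\eta$ — have already been recorded in the text preceding this lemma.
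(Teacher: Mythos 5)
Your proposal is correct and follows exactly the paper's argument: the Euler--Lagrange equation at the interior minimizer $\eta_k^\ell$ (valid by \eqref{as:gen-E-differentiable} and the inactivity of the constraint), then a rewriting in terms of the interpolants $\hat\eta_{(\tau)}^{(h)}$, $\overline\eta_{(\tau)}^{(h)}$. Your observation that the right-hand side is really $\overline f_{(\tau)}^{(h)}(t)=f_k^\ell$ rather than $f(t)$ is a fair reading of the lemma's statement, which the paper's own one-line proof also tacitly uses.
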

\begin{proof}
Since $\eta_k^\ell$ is a minimizer of \eqref{def:eta-minimizing-movements} and it is an interior point, by \eqref{as:gen-E-differentiable} $E$ is differentiable at $\eta_k^\ell$, and we have the Euler-Lagrange equation
\begin{equation*}
\frac{\frac{\eta_k^\ell-\eta_{k-1}^\ell}{\tau}-\frac{\eta_k^{\ell-1}-\eta_{k-1}^{\ell-1}}{\tau}}{h} + DE(\eta_k^\ell) - c\tau \frac{\eta_k^\ell-\eta_{k-1}^\ell}{\tau}= f_k^\ell.  
\end{equation*}
Using the notation above, we have proven the claim.
\end{proof}

\subsubsection*{Approximation of the right hand side}
Now we verify that the discretization of the right hand side is well-behaved.
\begin{lemma}
Let $f\in L^p((0,T);X)$ with $X$ a Banach space and $1\leq p < \infty$. Define
\begin{equation*}
\overline{f}^{(\tau)}(t)=f_k, t\in[t_{k-1},t_k),\quad \text{where  }f_k=\fint_{t_{k-1}}^{t_k}f\dt. 
\end{equation*}
 Then $\norm{\overline{f}^{(\tau)}}_{L^p((0,T);X)}\leq \norm{f}_{L^p((0,T);X)}$ and moreover $\overline{f}^{(\tau)}\to f$ in $L^p((0,T);X)$ as $\tau \to 0$.
\end{lemma}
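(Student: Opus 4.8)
The statement is a standard fact about time-averaging operators, so the plan is to treat the two claims separately: first the norm bound, then the strong convergence. For the norm bound, I would fix $k$ and apply Jensen's inequality (or H\"older's inequality with the normalized measure $\fint_{t_{k-1}}^{t_k}$) to the Bochner integral defining $f_k$, obtaining $\norm{f_k}_X^p = \norm{\fint_{t_{k-1}}^{t_k} f\dt}_X^p \leq \fint_{t_{k-1}}^{t_k}\norm{f}_X^p\dt$. Multiplying by $\tau = t_k - t_{k-1}$ gives $\int_{t_{k-1}}^{t_k}\norm{\overline f^{(\tau)}}_X^p\dt \leq \int_{t_{k-1}}^{t_k}\norm{f}_X^p\dt$, and summing over $k$ yields $\norm{\overline f^{(\tau)}}_{L^p((0,T);X)}^p \leq \norm{f}_{L^p((0,T);X)}^p$. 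One should note that the Bochner integral is well-defined since $f\in L^p((0,T);X)\subset L^1_{\mathrm{loc}}((0,T);X)$, and that the piecewise-constant function $\overline f^{(\tau)}$ is strongly measurable.

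\textbf{Convergence.} For the strong convergence $\overline f^{(\tau)}\to f$ in $L^p((0,T);X)$, I would use a density argument. The set $C([0,T];X)$ (or simple functions, or $C^\infty_c$-type functions valued in $X$) is dense in $L^p((0,T);X)$ for $1\leq p<\infty$. Given $\varepsilon>0$, pick $g\in C([0,T];X)$ with $\norm{f-g}_{L^p((0,T);X)}<\varepsilon$. Then split
\begin{equation*}
\norm{\overline f^{(\tau)}-f}_{L^p} \leq \norm{\overline f^{(\tau)}-\overline g^{(\tau)}}_{L^p} + \norm{\overline g^{(\tau)}-g}_{L^p} + \norm{g-f}_{L^p}.
\end{equation*}
The first term is bounded by $\norm{f-g}_{L^p}<\varepsilon$ by the contraction property just proved (applied to $f-g$, noting that averaging is linear so $\overline f^{(\tau)}-\overline g^{(\tau)} = \overline{(f-g)}^{(\tau)}$); the third term is $<\varepsilon$ by choice of $g$. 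For the middle term, uniform continuity of $g$ on the compact interval $[0,T]$ gives that on each subinterval $[t_{k-1},t_k)$ the oscillation of $g$ is at most the modulus of continuity $\omega_g(\tau)$, hence $\norm{\overline g^{(\tau)}(t)-g(t)}_X \leq \omega_g(\tau)$ pointwise, so $\norm{\overline g^{(\tau)}-g}_{L^p((0,T);X)}\leq T^{1/p}\omega_g(\tau)\to 0$ as $\tau\to 0$. Thus $\limsup_{\tau\to 0}\norm{\overline f^{(\tau)}-f}_{L^p}\leq 2\varepsilon$ for every $\varepsilon>0$, giving the claim.

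\textbf{Main obstacle.} There is no serious obstacle; the only points requiring a little care are measurability of $\overline f^{(\tau)}$ (immediate, as it is a finite sum of indicator functions times fixed vectors in $X$) and the legitimacy of Jensen's inequality for Bochner integrals (valid because $t\mapsto \norm{t}_X^p$ is convex and $X$-valued integration commutes with bounded linear functionals, so one can first reduce to scalar-valued via Hahn--Banach duality if one wishes to be fully rigorous). I would present the proof compactly in two short paragraphs mirroring the two assertions.
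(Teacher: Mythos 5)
Your proposal is correct and follows essentially the same route as the paper's proof: Jensen's inequality for the contraction property, then a density argument with a continuous approximant $g$, the triangle inequality, and uniform continuity of $g$ for the middle term. The only cosmetic difference is that you phrase the oscillation bound via the modulus of continuity $\omega_g(\tau)$ whereas the paper fixes $\tau$ so that the oscillation is at most $\epsilon$.
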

\begin{proof}
For the first part, use the Jensen inequality
\begin{equation*}
\norm{\overline{f}^{(\tau)}}_{L^p((0,T);X)}^p=\sum_{k=1}^N\tau \norm{\fint_{t_{k-1}}^{t_k} f(t)\dt }_X^p  \leq \sum_{k=1}^N \tau\fint_{t_{k-1}}^{t_k}\norm{f(t)}_X^p \dt =\norm{f}_{L^p((0,T);X)}^p.
\end{equation*}
To prove the convergence, fix $\epsilon>0$ and find $g\in C([0,T];X)$ with $\norm{f-g}_{L^p((0,T);X)}\leq \epsilon$. By uniform continuity we find $\tau>0$ such that $|t-s|<\tau$ implies $\norm{g(t)-g(s)}_X\leq \epsilon$. Then by Jensen inequality and the uniform continuity
\begin{multline*}
\norm{g-\overline g^{(\tau)}}_{L^p([0,T];X)}^p=\sum_{k=1}^N \int_{t_{k-1}}^{t_k} \norm{\fint_{t_{k-1}}^{t_k}g(t)-g(s)\dx[s]}_X^p\dt \\
\leq \sum_{k=1}^N \int_{t_{k-1}}^{t_k} \fint_{t_{k-1}}^{t_k}\norm{g(t)-g(s)}_X^p\dx[s]\dx[t] \leq T\epsilon^p.
\end{multline*}
Moreover by linearity $\overline{f}^{(\tau)}-\overline{g}^{(\tau)} = \overline{(f-g)}^{(\tau)}$, so $
\norm{\overline{f}^{(\tau)}-\overline{g}^{(\tau)}}_{L^p((0,T);X)}^p\leq \norm{f-g}_{L^p((0,T);X)}^p$ by the first part. We conclude the proof by the triangle inequality
\begin{multline*}
\norm{f-\overline{f}^{(\tau)}}_{L^p((0,T);X)} \leq \norm{f-g}_{L^p((0,T);X)}+\norm{g-\overline{g}^{(\tau)}}_{L^p((0,T);X)}+\norm{\overline{g}^{(\tau)}-\overline{f}^{(\tau)}}_{L^p((0,T);X)}\\
 \leq \epsilon+\sqrt[p]{T} \epsilon +\epsilon.
\end{multline*}

\end{proof}

\begin{lemma}\label{lem:approx-rhs-convergence}
Let $f\in L^p((0,T);X)$ with $X$ a Banach space and $1\leq p<\infty$. Extend $f$ by $0$ outside $(0,T)$ and define
\begin{equation*}
f_k^\ell:= \fint_0^\tau \fint_0^h f(t_{k-1}^{\ell-1}+s+\sigma) \dx[s] \dx[\sigma], \quad \bar f^{(h)}_{(\tau)}(t):= f_k^\ell, \quad t\in [t_{k-1}^\ell,t_k^\ell).
\end{equation*}
Then $\norm{\bar f_{(\tau)}^{(h)}}_{L^p((0,T);X)}\leq\norm{ f}_{L^p((0,T);X)}$ and moreover $\bar f_{(\tau)}^{(h)} \to f$ in $L^p((0,T);X)$ as $h,\tau\to 0$.
\end{lemma}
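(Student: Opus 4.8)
The plan is to recognise the two-scale average $\bar f^{(h)}_{(\tau)}$ as a composition of three elementary operators on $L^p(\R;X)$ --- a sliding average of width $h$, a cell average on the grid $\tau\Z$, and a causal shift by $h$ --- each a contraction that tends strongly to the identity as its parameter goes to $0$; the statement then reduces to the (single-scale) preceding lemma together with standard facts. First I would extend $f$ by zero to all of $\R$ and use Fubini to write
\begin{equation*}
f_k^\ell=\fint_0^\tau\Big(\fint_0^h f(t_{k-1}^{\ell-1}+s+\sigma)\dx[\sigma]\Big)\dx[s]=\fint_0^\tau (B_hf)(t_{k-1}^{\ell-1}+s)\dx[s],\qquad (B_hf)(v):=\fint_0^h f(v+\sigma)\dx[\sigma].
\end{equation*}
Since $t_{k-1}^{\ell-1}=\big((\ell-1)N+k-1\big)\tau$ lies on the grid $\tau\Z$, the outer average is precisely the mean of $B_hf$ over the $\tau$-cell $[t_{k-1}^{\ell-1},t_{k-1}^{\ell-1}+\tau)$. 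Writing $A_\tau g$ for the function that on each cell $[j\tau,(j+1)\tau)$, $j\in\Z$, equals the mean of $g$ over that cell, and $S_h$ for the translation $(S_hu)(t)=u(t-h)$, and observing that $[t_{k-1}^\ell,t_k^\ell)-h=[t_{k-1}^{\ell-1},t_{k-1}^{\ell-1}+\tau)$, I would conclude the pointwise identity
\begin{equation*}
\bar f^{(h)}_{(\tau)}=\big(S_hA_\tau B_hf\big)\big|_{(0,T)}.
\end{equation*}

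The norm estimate is then immediate: each of $S_h,A_\tau,B_h$ is a contraction on $L^p(\R;X)$. Indeed $S_h$ is an isometry; $B_hf=f*\big(\tfrac{1}{h}\mathbf{1}_{(-h,0)}\big)$ is a convolution with a probability density, so Minkowski's integral inequality applies; and $A_\tau$ is a contraction by Jensen's inequality, which is exactly the computation carried out in the preceding lemma (now on the grid $\tau\Z$ rather than on $(0,T)$). Composing and using that restriction does not increase the norm gives $\norm{\bar f^{(h)}_{(\tau)}}_{L^p((0,T);X)}\le\norm{S_hA_\tau B_hf}_{L^p(\R;X)}\le\norm{f}_{L^p(\R;X)}=\norm{f}_{L^p((0,T);X)}$.

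For the convergence I would mimic the density argument used in the preceding lemma. Fix $\varepsilon>0$ and choose $g\in C_c(\R;X)$ with $\norm{f-g}_{L^p(\R;X)}\le\varepsilon$, possible since $p<\infty$. By the contraction property $\norm{S_hA_\tau B_h(f-g)}_{L^p(\R;X)}\le\varepsilon$, so by the triangle inequality it suffices to show $S_hA_\tau B_hg\to g$ in $L^p(\R;X)$ as $h,\tau\to0$. For $g$ continuous with compact support this is elementary: $B_hg\to g$ and then $A_\tau B_hg\to g$ uniformly (averaging a uniformly continuous function over cells of width $\tau$, with an equicontinuity modulus independent of $h$), and all the functions involved are supported in one fixed compact set, hence $A_\tau B_hg\to g$ in $L^p(\R;X)$; combining with $\norm{S_hg-g}_{L^p(\R;X)}\to0$ (immediate from the uniform continuity of $g$) and the splitting $S_hA_\tau B_hg-g=S_h(A_\tau B_hg-g)+(S_hg-g)$ together with the isometry property of $S_h$, this yields $S_hA_\tau B_hg\to g$. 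Restricting to $(0,T)$ completes the argument.

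I expect the only genuinely non-routine step --- and the one I would write out in full --- to be the bookkeeping identity $\bar f^{(h)}_{(\tau)}=(S_hA_\tau B_hf)|_{(0,T)}$: one has to chase the indices $k$ and $\ell$ through \eqref{eqn:fkl-def} and the definition of $\bar f^{(h)}_{(\tau)}$ and see that $f_k^\ell$ is a $\tau$-cell average of an $h$-sliding average evaluated a time $h$ in the past, a reflection of the causal structure of the scheme \eqref{def:eta-minimizing-movements}. Everything afterwards is the standard circle of ideas --- contractions, approximate identities, continuity of translation, density of $C_c$ --- all valid verbatim in the Bochner setting, and the joint limit $h,\tau\to0$ causes no trouble, since the single $\varepsilon$-net provided by $g$ handles both parameters at once.
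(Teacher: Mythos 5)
Your proof is correct, and it reorganizes the paper's argument into a cleaner operator-theoretic form rather than following its direct computation. The paper establishes the contraction property by applying Jensen's inequality twice inside the double sum and then Fubini to recognise the result as $\int_0^T\fint_0^h\|f(t+s)\|^p\,\mathrm{d}s\,\mathrm{d}t$; your decomposition $\bar f^{(h)}_{(\tau)}=(S_hA_\tau B_hf)|_{(0,T)}$ packages the same Jensen/Fubini content into the statement that $B_h$ (convolution with a probability kernel), $A_\tau$ (grid cell-averaging), and $S_h$ (translation) are each contractions on $L^p(\R;X)$. For the convergence, the paper fixes $g\in C([0,T];X)$ and estimates $\|g-\bar g^{(h)}_{(\tau)}\|$ directly cell by cell using uniform continuity; you instead use density of $C_c(\R;X)$ and the strong convergence of each factor to the identity, with the minor extra care you note about the equicontinuous family $\{B_hg\}_h$ to handle the joint limit. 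Both routes rely on the same triad --- Jensen, Fubini, uniform continuity via density --- but your factorisation makes the norm bound one-line and makes transparent why the causal shift by $h$ appears (a feature the paper leaves implicit in its change of variables); the paper's version is more self-contained in that it avoids introducing the auxiliary operators and works entirely on the finite interval. One small bookkeeping remark: when you restrict $S_hA_\tau B_hf$ to $(0,T)$, the identity with $\bar f^{(h)}_{(\tau)}$ uses that $t_{k-1}^{\ell-1}=((\ell-1)N+k-1)\tau$ lies on the grid $\tau\Z$ and that $f$ is extended by zero so the formula remains meaningful even when $t_{k-1}^{\ell-1}+s+\sigma$ falls outside $(0,T)$; you have this, but it is worth stating explicitly since the scheme's indexing makes the off-by-$h$ shift easy to lose.
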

\begin{proof}
For the first claim, use twice Jensen inequality as follows
\begin{multline*}
\norm{\bar f_{(\tau)}^{(h)}}_{L^p((0,T);X)}^p = \sum_{\ell=1}^M\sum_{k=1}^N \tau\norm{f_k^\ell}_X^p= \sum_{\ell=1}^M\sum_{k=1}^N \tau\norm{\fint_0^\tau \fint_0^h f(t_{k-1}^{\ell-1}+s+\sigma) \dx[s] \dx[\sigma]}_X^p \\
\leq \sum_{\ell=1}^M\sum_{k=1}^N  \int_0^\tau \norm{\fint_0^h f(t_{k-1}^{\ell-1}+s+\sigma) \dx[s] }_X^p \dx[\sigma]=\sum_{\ell=1}^M\sum_{k=1}^N  \int_{t_{k-1}^{\ell-1}}^{t_k^{\ell-1}} \norm{\fint_0^h f(t+s) \dx[s] }_X^p\dx[t] \\
= \int_0^T \norm{\fint_0^h f(t+s) \dx[s] }_X^p\dx[t]
 \leq \int_0^T \fint_0^h \norm{f(t+s)}_X^p \dx[s] \dx[t] = \fint_0^h \int_0^T \norm{f(t+s)}_X^p\dx[t]  \dx[s] \\
 \leq \fint_0^h \norm{f}_{L^p((0,T);X)}^p \dx[s] = \norm{f}_{L^p((0,T);X)}^p.
\end{multline*}

Fix $\epsilon>0$. Find $g\in C([0,T];X)$ with $\norm{f-g}_{L^p((0,T);X)}\leq \epsilon$. Then, by uniform continuity of $g$, find $h_0>0$ such that for all $|t-s|
\leq h_0$ it holds $\norm{g(t)-g(s)}_X\leq \epsilon$. Then, using this and Jensen inequality, for $\tau\leq h\leq h_0$
\begin{align*}
\norm{g-\overline{g}_{(\tau)}^{(h)}}_{L^p((0,T);X)}^p= \sum_{\ell=1}^M\sum_{k=1}^N\int_{t_{k-1}^\ell}^{t_k^\ell} \norm{\fint_0^\tau \fint_0^h g(t)-g(t_{k-1}^{\ell-1}+s+\sigma) \dx[s]\dx[\sigma]}_X^p \dt\\
\leq \sum_{\ell=1}^M\sum_{k=1}^N\int_{t_{k-1}^\ell}^{t_k^\ell} \fint_0^\tau \fint_0^h \norm{g(t)-g(t_{k-1}^{\ell-1}+s+\sigma)}_X^p \dx[s]\dx[\sigma]\dt\leq T \epsilon^p.
\end{align*}
Further, by linearity $\overline f_{(\tau)}^{(h)}-\overline g_{(\tau)}^{(h)} = (\overline{f-g})_{(\tau)}^{(h)} $ so by the first part $\norm{\overline f_{(\tau)}^{(h)}-\overline g_{(\tau)}^{(h)}}_{L^p((0,T);X)}^p \leq  \norm{f-g}_{L^p((0,T);X)}^p$. We then conclude the proof by the triangle inequality
\begin{multline*}
\norm{\overline{f}_{(\tau)}^{(h)}-f}_{L^p((0,T);X)}\leq \norm{\overline{f}_{(\tau)}^{(h)}-\overline{g}_{(\tau)}^{(h)}}_{L^p((0,T);X)}+\norm{\overline{g}_{(\tau)}^{(h)}-g}_{L^p((0,T);X)}+\norm{g-f}_{L^p((0,T);X)}\\
\leq \epsilon+ \sqrt[p]{T} \epsilon+\epsilon.
\end{multline*}
\end{proof}

\subsection{Stability -- elastic solid}
\label{ssec:stab}

We can now state our main stability result for the approximation \eqref{def:eta-minimizing-movements}. Note that we have an energy estimate with bound given by the initial condition and the right hand side, and in particular the energy can not grow in time in case no right hand side is considered.

\begin{theorem}[Stability with dissipation]\label{thm:stability-solid-stabilized}
There exists a $h_0>0$ and $c>0$ depending on $E(\eta_0)$, $\norm{\eta_*}_{L^2(Q)}$, $\norm{f}_{L^2((0,T)\times Q)}$, the assumptions on $E$ and $T$, such that for all $N\tau=h\leq h_0$ if the corresponding approximation $\eta_k^\ell$ does not reach a collision, i.e. satisfies $\eta_k^\ell \in \operatorname{int}\mathcal E$ for all $k$ and $\ell$, then the following stability estimate holds
\begin{equation}\label{eqn:stability-diss}
\max_{k=1,\dots,N} \left(E(\eta_k^\ell) + \frac1N\sum_{i=1}^k \norm{\frac{\eta_k^\ell-\eta_{k-1}^\ell}{\tau}}_{L^2}^2\right)
 \leq \left(\sqrt{E(\eta_0)+\frac12\norm{\eta_*}_{L^2}^2}+\norm{f}_{L^2((0,h\ell)\times Q)} \right)^2.
\end{equation}
\end{theorem}

\begin{proof}
To ease the notation, the norm $\norm{\cdot}$ without any index is the $L^2(Q)$ norm, and $\langle \cdot,\cdot\rangle$ is the $L^2(Q)$ scalar product (or dual pairing of $X$ and $X^*$ in the $DE$ terms).

We proceed by induction on $\ell$. Thus assume that the inequality \eqref{eqn:stability-diss} holds for $\ell-1$ (and every $k$), and we want to prove it for $\ell$.

We first show the following auxiliary estimate for $k=1,\dots,N$:
\begin{equation}\label{eqn:para-estimate}
E(\eta_{k}^\ell)\leq K:= \left(\sqrt{E(\eta_0)+\frac12\norm{\eta_*}_{L^2}^2}+\norm{f}_{L^2((0, T)\times Q)} \right)^2 + h_0\norm{f}_{L^2((0,T)\times Q)}^2.  
\end{equation}
For this, take $\eta_{k-1}^\ell$ as competitor in \eqref{def:eta-minimizing-movements}, which implies

\begin{align*}
\frac{\tau h}{2}\norm{\frac{\frac{\eta_k^\ell-\eta_{k-1}^\ell}{\tau}-\frac{\eta_k^{\ell-1}-\eta_{k-1}^{\ell-1}}{\tau}}{h}
}_{L^2}^2+E(\eta_k^\ell) +\frac{c\tau^2}{2}\norm{\nabla\frac{\eta_k^\ell-\eta_{k-1}^{\ell}}{\tau}}_{L^2}^2 
\\
\leq \frac{\tau h}{2}\norm{\frac{\eta_k^{\ell-1}-\eta_{k-1}^{\ell-1}}{\tau h}}^2+E(\eta_{k-1}^\ell) +\langle f_k^\ell , (\eta_{k}^\ell-\eta_{k-1}^\ell)\rangle_{L^2}.
\end{align*}
We then find that
\begin{align*}
&\frac{\tau}{2h}\norm{\frac{\eta_k^\ell-\eta_{k-1}^\ell}{\tau}-\frac{\eta_{k}^{\ell-1}-\eta_{k-1}^{\ell-1}}{\tau}
}_{L^2}^2+E(\eta_k^{\ell}) +\frac{c\tau^2}{2}\norm{\nabla\frac{\eta_k^\ell-\eta_{k-1}^\ell}{\tau}}_{L^2}^2 \\
&\leq \frac{\tau}{2h}\norm{\frac{\eta_{k}^{\ell-1}-\eta_{k-1}^{\ell-1}}{\tau}}^2+E(\eta_{k-1}^\ell) +\langle f_k^\ell , (\eta_{k}-\eta_{k-1})\rangle_{L^2}
\\
&\leq \frac{\tau}{2h}\norm{\frac{\eta_{k}^{\ell-1}-\eta_{k-1}^{\ell-1}}{\tau}}^2+E(\eta_{k-1}^\ell) + \norm{f_k^\ell}\tau\norm{\frac{\eta_{k}^{\ell-1}-\eta_{k-1}^{\ell-1}}{\tau}}+\norm{f_k^\ell}\tau\norm{\frac{\eta_{k}^\ell-\eta_{k-1}^\ell}{\tau}-\frac{\eta_k^{\ell-1}-\eta_{k-1}^{\ell-1}}{\tau}}
\\
&\leq \frac{\tau}{h}\norm{\frac{\eta_{k}^{\ell-1}-\eta_{k-1}^{\ell-1}}{\tau}}^2+E(\eta_{k-1}^\ell) +h\tau \norm{f_k^\ell}^2 + \frac{\tau}{2h}\norm{\frac{\eta_k^\ell-\eta_{k-1}^\ell}{\tau}-\frac{\eta_k^{\ell-1}-\eta_{k-1}^{\ell-1}}{\tau}}^2.
\end{align*}
Subtracting $\frac{\tau}{2h}\norm{\frac{\eta_k^\ell-\eta_{k-1}^\ell}{\tau} -\frac{\eta_k^{\ell-1}-\eta_{k-1}^{\ell-1}}{\tau}}^2$ on both sides and summing over $k$, we find that the $E(\eta_k^\ell)$ term telescopes, so that 
\begin{align*}&E(\eta_k^\ell) +\sum_{i=1}^k \frac{c\tau^2}{2}\norm{\nabla\frac{\eta_i^\ell-\eta_{i-1}^\ell}{\tau}}^2 
\leq E(\eta_0^\ell) + \sum_{i=1}^k \frac{\tau}{h} \norm{\frac{\eta_i^{\ell-1}-\eta_{i-1}^{\ell-1}}{\tau}}^2 + h\sum_{i=1}^k \tau \norm{f_i^\ell}^2.
\end{align*}
Now remember that we have $\eta_0^\ell=\eta_N^{\ell-1}$ and \eqref{eqn:stability-diss} with $\ell-1$ holds, so after a Jensen inequality $h\sum_{i=1}^k \tau \norm{f_i^\ell}^2 \leq \norm{f}_{L^2((\ell-1)h,\ell h)\times Q)}^2$ and $0<h\leq h_0$ we show \eqref{eqn:para-estimate}.

	
Equipped with this, we come to the main estimate.
Since we assume that $\eta_k^\ell\in \operatorname{int} \mathcal{E}$, $E$ is differentiable at $\eta_k^\ell$ and we can test the minimizer with $\frac{\eta_k^\ell-\eta_{k-1}^\ell}{\tau}$, which gives
\begin{equation*}
\left\langle \frac{\frac{\eta_k^\ell-\eta_{k-1}^\ell}{\tau}-\frac{\eta_k^{\ell-1}-\eta_{k-1}^{\ell-1}}{\tau}}{h}, \frac{\eta_k^\ell-\eta_{k-1}^\ell}{\tau} \right\rangle+\left\langle DE(\eta_k^\ell),\frac{\eta_k^\ell-\eta_{k-1}^\ell}{\tau}\right\rangle +c\tau \!\norm{\nabla\frac{\eta_k^\ell-\eta_{k-1}^\ell}{\tau}}^2\!\!\!=\!\left\langle f_k^\ell,\frac{\eta_k^\ell-\eta_{k-1}^\ell}{\tau}\right\rangle\!.
\end{equation*}
Using $a(a-b)=\frac{a^2}{2}-\frac{b^2}{2}+\frac{(a-b)^2}{2}$, we obtain for the first term
\begin{multline*}
\left\langle \frac{\frac{\eta_k^\ell-\eta_{k-1}^\ell}{\tau}-\frac{\eta_k^{\ell-1}-\eta_{k-1}^{\ell-1}}{\tau}}{h}, \frac{\eta_k^\ell-\eta_{k-1}^\ell}{\tau} \right\rangle\\
=  \frac{1}{2h}\norm{\frac{\eta_k^\ell-\eta_{k-1}^\ell}{\tau}}^2  + \frac{1}{2h}\norm{\frac{\eta_k^\ell-\eta_{k-1}^\ell}{\tau} - \frac{\eta_k^{\ell-1}-\eta_{k-1}^{\ell-1}}{\tau}}^2 -\frac{1}{2h}\norm{\frac{\eta_k^{\ell-1}-\eta_{k-1}^{\ell-1}}{\tau}}^2.
\end{multline*}

Now multiply by $\tau$, omit the middle term, use \Cref{thm:E-noncovexity} (note carefully that by \eqref{eqn:para-estimate} the resulting constant $C_1$ is independent of $\ell$ and $k$), and obtain
\begin{multline*}
\frac{\tau}{2h} \norm{\frac{\eta_k^\ell-\eta_{k-1}^\ell}{\tau}}^2
\!+E(\eta_k^\ell)+c\tau^2 \norm{\nabla\frac{\eta_k^\ell-\eta_{k-1}^\ell}{\tau}}^2 \\ \leq \frac{\tau}{2h}\norm{\frac{\eta_k^{\ell-1}-\eta_k^{\ell-1}}{\tau}}^2 \!+ E(\eta_{k-1}^\ell)+\tau\!\left\langle f_k^\ell,\frac{\eta_k^\ell-\eta_{k-1}^\ell}{\tau}\right\rangle +C_1\tau^2\norm{\nabla\frac{\eta_k^\ell-\eta_{k-1}^\ell}{\tau}}^2,
\end{multline*}
where we precisely here choose $c>2C_1$.
Hence 
\begin{align}
\label{eq:est}
\begin{aligned}
&\frac{\tau}{2h} \norm{\frac{\eta_k^\ell-\eta_{k-1}^\ell}{\tau}}^2
\!+E(\eta_k^\ell) +C_1\tau^2 \norm{\nabla\frac{\eta_k^\ell-\eta_{k-1}^\ell}{\tau}}^2
\\
&\quad \leq \frac{\tau}{2h}\norm{\frac{\eta_k^{\ell-1}-\eta_k^{\ell-1}}{\tau}}^2 \!+ E(\eta_{k-1}^\ell)+\tau\!\left\langle f_k^\ell,\frac{\eta_k^\ell-\eta_{k-1}^\ell}{\tau}\right\rangle.
\end{aligned}
\end{align}
Using the inequality
\begin{equation*}
\tau\left\langle f_k^\ell, \frac{\eta_k^\ell-\eta_{k-1}^\ell}{\tau}\right\rangle \leq \tau \norm{f_k^\ell} \norm{\frac{\eta_k^\ell-\eta_{k-1}^\ell}{\tau}}
\end{equation*}
we get, denoting $a_k^\ell = E(\eta_k^\ell)$, $b_k^\ell =\frac12\norm{\frac{\eta_k^\ell-\eta_{k-1}^\ell}{\tau}}^2$, $d_k^\ell=\tau \norm{f_k^\ell}$ and $N=h/\tau$ the inequality
\begin{equation*}
a_k^\ell+\frac1N b_k^\ell\leq a_{k-1}^\ell + \frac1N b_k^{\ell-1} + d_k^\ell \sqrt{b_k^\ell}.
\end{equation*}
Now we are in a position to use the Two-scale Gronwall inequality with square root \Cref{twoscale-gronwall-sqrt}. Thus, we obtain
$$
\max_{k=1,\dots,N} \left( a_k^\ell + \frac1N\sum_{i=1}^k b_i^\ell\right) \leq \left(\sqrt{a_0^0+b_0^0}+\sum_{l=1}^\ell\sqrt{N \sum_{k=1}^N (d_k^\ell)^2}\right)^2, \quad \ell=1,\dots,M-1.
$$
So this reads
$$
\max_{k=1,\dots,N} \left(E(\eta_k^\ell) + \frac1N\sum_{i=1}^k \norm{\frac{\eta_k^\ell-\eta_{k-1}^\ell}{\tau}}^2\right) \leq \left(\sqrt{E(\eta_0)+\frac12\norm{\eta_*}^2}+\sum_{l=1}^\ell h \sqrt{\frac1N \sum_{k=1}^N \|f_k^\ell\|^2}\right)^2.
$$
Finally, using \Cref{lem:approx-rhs-convergence}, from which we get by Jensen inequality 
\begin{equation*}
\sum_{l=1}^\ell h \sqrt{\frac1N \sum_{k=1}^N \|f_k^\ell\|^2}\leq \norm{f}_{L^2((0,h\ell)\times Q)},
\end{equation*}
and the induction on $\ell$ is finished.
\end{proof}

\begin{lemma}[Dissipation estimate]\label{dissipation-estimate} 
Further, the approximation satisfies 
\begin{equation}
c\tau \norm{\partial_t\nabla \hat\eta_{(\tau)}^{(h)}}_{L^2((0,T)\times Q)}^2 =c \tau \sum_{\ell=1}^M\sum_{k=1}^N \tau \norm{\nabla\frac{\eta_k^\ell-\eta_{k-1}^\ell}{\tau}}^2 \leq C
\end{equation}
with $C$ depending on $\eta_0$, $\eta_*$, $f$. 
\end{lemma}
\begin{proof}
When we sum \eqref{eq:est}
over $k$ and $\ell$, we obtain 
$$C_1\tau \sum_{\ell=1}^M \sum_{k=1}^N \tau \norm{\nabla\frac{\eta_k^\ell-\eta_{k-1}^\ell}{\tau}}^2 \leq \frac12 \|\eta_*\|^2 +E(0)-E_{\min}+\sum_{\ell=1}^M \sum_{k=1}^N\tau \left\langle f_k^\ell,\frac{\eta_k^\ell-\eta_{k-1}^\ell}{\tau}\right\rangle 
$$
Estimate
$$\sum_{\ell=1}^M \sum_{k=1}^N\tau \left\langle f_k^\ell,\frac{\eta_k^\ell-\eta_{k-1}^\ell}{\tau}\right\rangle  \leq \sum_{\ell=1}^M h \sqrt{\frac1N \sum_{k=1}^N \norm{f_k^\ell}^2 } \sqrt{\frac1N \sum_{k=1}^N \norm{\frac{\eta_k^\ell-\eta_{k-1}^\ell}{\tau}}^2}$$
Since by the stability estimate \ref{thm:stability-solid-stabilized} the last term is bounded by a constant, we are finished.
\end{proof}

\subsection{Stability -- general}

For the general case, namely assuming Assumption~\ref{assumptions-general}, the minimizing movement approximation \eqref{def:eta-minimizing-movements} becomes

\begin{equation}\label{def:eta-minimizing-movements-gen}
\eta_k^{\ell} = \argmin_{\eta\in\mathcal E}\frac{\tau h}{2}\norm{\frac{\frac{\eta-\eta_{k-1}^\ell}{\tau}-\frac{\eta_k^{\ell-1}-\eta_{k-1}^{\ell-1}}{\tau}}{h}
}_{H}^2+E(\eta) +\frac{c\tau^2}{2}\left\langle L\frac{\eta_k^\ell-\eta_k^{\ell-1}}{\tau},\frac{\eta_k^\ell-\eta_k^{\ell-1}}{\tau}\right\rangle_Z -\langle f_k^\ell , \eta\rangle_H,
\end{equation}

It is readily seen that the argument above for the elastic solids goes through in the general case. We thus have 

\begin{theorem}
There exists a $h_0>0$ and $c_0>0$ depending on $E(\eta_0)$, $\norm{\eta_*}_H$, $\norm{f}_{L^2((0,T);H)}$, the assumptions on $E$ and $T$ such that for all $N\tau=h\leq h_0$ if $c>c_0$ in \eqref{def:eta-minimizing-movements-gen} and the corresponding approximation  $\eta_k^\ell$ satisfies $\eta_k^\ell \in \operatorname{int} \mathcal E$ we have the stability estimate
\begin{equation*}
\max_{k=1,\dots,N} \left(E(\eta_k^\ell) +\frac{1}{2N} \sum_{i=1}^k\norm{\frac{\eta_i^\ell-\eta_{i-1}^\ell}{\tau}}_H^2 \right) \leq  \left(\sqrt{E(\eta_0)+\frac12 \norm{\eta_*}_H^2}+ \norm{f}_{L^2((0,\ell h);H)}\right)^2,
\end{equation*}
further the approximation satisfies 
\begin{equation}
c\lambda \tau \sum_{\ell=1}^M\sum_{k=1}^N \tau \norm{\frac{\eta_k^\ell-\eta_{k-1}^\ell}{\tau}}_Z^2 \leq C
\end{equation}
with $C$ depending on $\eta_0$, $\eta_*$, $f$. 
\end{theorem}

\subsection{Stability -- ODE} 
\label{ssec:stabend}
It is worth mentioning that second-order ordinary differential equations fit into our framework, and therefore our approximation is also stable here.

 Namely, consider the equation with unknown $x\colon (0,T)\to \R^n$
\begin{align}
\begin{aligned}
x''+ \nabla E(x) &= f,\\
x(t)&\in\mathcal E,\\
x(0)&=x_0,\\
x'(0)&=x_*,
\end{aligned}
\end{align}
where $f\in L^2((0,T);\R^n)$, $x_0,x_*\in\R^n$, $E\colon \R^{n} \to (-\infty,\infty]$ and $\mathcal E \subset \R^n$ closed with nonempty interior (and $X=H=\R^n$). 

Moreover let $E\in C(\R^n,(-\infty,\infty])\cap C^2(\mathcal{E})$ and $\mathcal E \subset \{z\in\R^n: E(z)<\infty\}$ and $E$ be coercive in the sense $\lim_{|x|\to\infty}E(x)=\infty$. We only need to show the non-convexity estimate \eqref{as:nonconvexity-general-Z}, the rest of the properties are clear.

\begin{lemma}
The function $E$ satisfies the non-convexity estimate: For all $x,y\in \mathcal{E}$ we find 
\begin{equation*}
 \nabla E(y) \cdot(y-x) \geq E(y)-E(x) - C|x-y|^2,
\end{equation*}
where $C$ depends on $K=\max(E(x),E(y))$.
\end{lemma}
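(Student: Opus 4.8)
The plan is to mimic the proof of \Cref{thm:E1-nonconvexity}, but now in the finite-dimensional setting where there is no need to split off a regularizer: the whole energy $E$ plays the role that $E_1$ played there, and the Ciarlet--Ne\v cas complication is absent. The key point is that, since $E$ is coercive and finite on $\mathcal E$ with $E\in C^2(\mathcal E)$, the sublevel set $\{E\le K\}$ is compact, and $D^2E$ is bounded on a suitable compact neighbourhood of it inside $\mathcal E$.

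\medskip

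\noindent\textbf{Step 1: Reduce to a compact set.} By coercivity, $S_K:=\{z\in\R^n: E(z)\le K\}$ is bounded, hence compact (as $E$ is lower semicontinuous, being continuous into $(-\infty,\infty]$). Both $x,y$ lie in $S_K\cap\mathcal E$. Since $\mathcal E$ has nonempty interior and $E\in C^2(\mathcal E)$, but a priori $x,y$ could sit on $\partial\mathcal E$, I would introduce a small radius $r>0$ such that the closed $r$-neighbourhood of $S_K$ intersected with $\mathcal E$ is still a compact set $\mathcal B$ on which $D^2E$ is continuous, hence bounded by some $C_2=\max_{\mathcal B}|D^2E|<\infty$. (Here one uses that $S_K\cap\mathcal E$ is compact and $E\in C^2$ on the open set $\operatorname{int}\mathcal E\supset$ a neighbourhood — if $\mathcal E$ is merely closed one works with points at distance $\ge r$ from $\partial\mathcal E$ and handles the rest as the ``far'' part below, exactly as in \Cref{thm:E1-nonconvexity}.)

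\medskip

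\noindent\textbf{Step 2: Split into near and far parts.} Write, with $x_\theta=\theta y+(1-\theta)x$,
\begin{equation*}
\nabla E(y)\cdot(y-x)=\int_{\{|y-x|\le r\}}\!\!\!\nabla E(y)\cdot(y-x)+\int_{\{|y-x|>r\}}\!\!\!\nabla E(y)\cdot(y-x),
\end{equation*}
which of course in $\R^n$ is just a case distinction, not an integral, but I keep the notation parallel to the PDE proof. On the far part, where $|y-x|>r$, use that $|\nabla E(y)|\le C_K$ on $S_K$ (continuity of $\nabla E$ on the compact $S_K$) and similarly $|E(y)-E(x)|\le 2C_K$, so both $\nabla E(y)\cdot(y-x)$ and $E(y)-E(x)$ are bounded below by $-c|y-x|^2/r$ up to constants depending only on $K$. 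On the near part, where the whole segment $[x,y]$ stays inside $\mathcal B$ (provided $r$ was chosen so that $B_r(S_K)\cap\mathcal E\subset\mathcal B$), apply the second-order Taylor theorem:
\begin{equation*}
\nabla E(y)\cdot(y-x)=E(y)-E(x)+\tfrac12 D^2E(x_\theta)(y-x,y-x)\ge E(y)-E(x)-\tfrac{C_2}{2}|y-x|^2.
\end{equation*}
Adding the two parts and absorbing all constants into a single $C$ depending only on $K$ gives the claim.

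\medskip

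\noindent\textbf{Main obstacle.} The only delicate point is the interplay between $C^2$-regularity of $E$ and the possibility that $x$ or $y$ lies on $\partial\mathcal E$: one must ensure that Taylor's theorem along the segment $[x,y]$ is applied only where $E$ is genuinely twice differentiable, i.e.\ only on a compact subset of $\operatorname{int}\mathcal E$ (or wherever the $C^2$-hypothesis is meant to hold). This is handled exactly as in the proof of \Cref{thm:E1-nonconvexity}: choose $r$ so that the near part stays in the compact set $\mathcal B\subset\operatorname{int}\mathcal E$, and push everything with $|y-x|>r$ into the far part, which is controlled purely by the $C_K$-bounds on $E$ and $\nabla E$ over the compact sublevel set. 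Everything else is routine.
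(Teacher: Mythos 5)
Your skeleton (split into $|y-x|\le r$ and $|y-x|>r$, Taylor on the near part, trivial quadratic absorption on the far part) is sound and does give the estimate, and it is a genuinely different organization from the paper's. The paper instead constructs a single global cutoff $\tilde E\in C^2(\R^n)$ with $\tilde E=E$ on (a neighbourhood of) the compact set $\{z\in\mathcal E: E(z)\le K\}$ and $\|\tilde E\|_{C^2(\R^n)}$ depending only on $K$, then applies Taylor to $\tilde E$ along the \emph{entire} segment $[x,y]$; since $\tilde E$ is $C^2$ everywhere with uniformly bounded Hessian, there is no near/far case distinction at all, and the question of whether the segment stays inside $\mathcal E$ never arises. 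Your route buys a more explicit constant ($C_K/r+2C_K'/r^2+C_2$, as in \Cref{thm:E1-nonconvexity}); the paper's route buys a shorter argument and, more importantly, immunity to the geometry of $\mathcal E$.

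That geometric point is where your proposal has a genuine gap. You claim the delicate boundary case is ``handled exactly as in \Cref{thm:E1-nonconvexity}: choose $r$ so that the near part stays in a compact set $\mathcal B\subset\operatorname{int}\mathcal E$.'' This is not achievable: if $x$ or $y$ lies on $\partial\mathcal E$ (which the hypotheses permit, since nothing forces the sublevel set $\{z\in\mathcal E:E(z)\le K\}$ away from $\partial\mathcal E$), then no choice of $r$ places the segment $[x,y]$ in a compact subset of $\operatorname{int}\mathcal E$. Your fallback --- push such pairs into the ``far part'' --- also fails, because the far part is controlled only via $|y-x|>r$ (which converts the linear bounds $|\nabla E(y)\cdot(y-x)|\le C_K|y-x|$ and $|E(y)-E(x)|\le 2C_K$ into quadratic ones); a pair with $x,y$ both near $\partial\mathcal E$ but $|y-x|\le r$ is caught by neither case. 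The analogy with \Cref{thm:E1-nonconvexity} breaks precisely here: there, the set $\mathcal A$ is compactly contained in the open set $\R^{n\times n}_{\det>0}$ \emph{because of the uniform determinant lower bound} \eqref{as:det-lower-bound}, and no such quantitative separation from $\partial\mathcal E$ is available in the ODE setting. To close the argument you must either read the hypothesis $E\in C^2(\mathcal E)$ as ``$C^2$ on an open neighbourhood of $\mathcal E$'' (then take $\mathcal B$ to be a closed $r$-neighbourhood of the compact sublevel set on which $D^2E$ is bounded, and your Taylor step goes through verbatim), or adopt the paper's cutoff $\tilde E$, which is exactly the device designed to make the segment irrelevant.
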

\begin{proof}
Let $x,y\in\mathcal{E}$. Let us introduce a cutoff $\tilde E$ of $E$ such that $\tilde E\colon \R^n\to \R$, $\tilde E\in C^2(\R^n)$, $\tilde E(z)=E(z)$ for all $z\in \mathcal E$ with $E(z)\leq K$, and $\|\tilde E\|_{C^2(\R^n)}$ norm depends only on $K$. This we can certainly achieve by smoothly cutting off $E$ near the compact set $\{z\in\mathcal E : E(z)\leq K \}$, which depends only on $K$.

Then we find that for some $z$ on the line segment between $x$ and $y$
\begin{equation*}
\tilde E(x)=\tilde E(y)+\nabla \tilde E(y)\cdot(x-y) +\frac12 \nabla^2 \tilde E(z) : (y-x)\otimes (y-x)
\end{equation*}
Since $E=\tilde E$ on a neighbourhood of $x$ and $y$, we find that
\begin{equation*}
\nabla E(y)\cdot (y-x) \geq E(y)-E(x) - \frac12\|\tilde E\|_{C^2(\R^n)}|y-x|^2.
\end{equation*}
which finishes the proof.
\end{proof}

\subsection{Stability estimate without stabilizing term--elastic solids}

In the approximation \eqref{def:eta-minimizing-movements} we have used a stabilizing dissipation term. In this section, we show that if we assume the stronger convexity assumption \eqref{as:E2-unif-convex}, we obtain a stability estimate without introducing the stabilizing dissipation term. Although this estimate is weaker than the previous one, we consider it possibly of independent interest due to easier implementation of this scheme.
\\
Therefore, throughout this section, we assume \eqref{as:E2-unif-convex} instead of \eqref{as:E2-convex}, respectively \eqref{as:nonconvexity-general-H} instead of \eqref{as:nonconvexity-general-Z}.

The time step \eqref{def:eta-minimizing-movements} is now replaced by the following minimization:
\begin{equation}\label{def:eta-minimizing-movements-nodiss}
\eta_k^{\ell} = \argmin_{\eta\in\mathcal E}\frac{\tau h}{2}\norm{\frac{\frac{\eta-\eta_{k-1}^\ell}{\tau}-\frac{\eta_k^{\ell-1}-\eta_{k-1}^{\ell-1}}{\tau}}{h}
}_{L^2}^2+E(\eta)-\langle f_k^\ell , \eta\rangle,
\end{equation}
where as before $\eta_0^0:=\eta_0$ and for $\ell=0$, the fraction$\frac{\eta_k^{-1}-\eta_{k-1}^{-1}}{\tau}$ is replaced by $\eta_*$. Moreover we take $\eta_0^{\ell+1}:= \eta_N^\ell$, since $t_0^{\ell+1}=t_N^\ell$. The term $f_k^\ell$ is as before defined by \eqref{eqn:fkl-def}.

Analogously as before we find (denoting the piecewise constant/affine interpolations) that an approximation can be constructed.
\begin{lemma}
The minimizer $\eta_k^\ell\in\mathcal E$ exists. If $\eta_k^\ell\in \partial\mathcal E$, then a (self-)collision occurred. Assume that no collision happened, that is $\eta_k^\ell\in \operatorname{int} \mathcal{E}$ for all $k$ and $\ell$. Then it holds for a.a. times $t\in(0,T)$ that 
\begin{equation*}
\frac{\partial_t\hat\eta_{(\tau)}^{(h)}(t)-\partial_t\hat\eta_{(\tau)}^{(h)}(t-h)}{h}  +DE\left (\overline\eta_{(\tau)}^{(h)}(t)\right ) = f(t).
\end{equation*}
\end{lemma} 
Now we present the proof of the stability in this case. Notice in particular on the right hand side the term which linearly depends on $\tau$ and approaches $1$ with $\tau\to 0$.

\begin{theorem}[Stability for elastic solid]\label{thm:stability-solid}
There exists a $h_0>0$ and $C>0$ depending on $E(\eta_0)$, $\norm{\eta_*}_{L^2(Q)}$, $\norm{f}_{L^2((0,T)\times Q)}$, the assumptions on $E$ and $T$, such that for all $N\tau=h\leq h_0$ with $h\ell\leq T$ the following holds: If the corresponding approximation $\eta_k^\ell$ does not reach a collision, i.e. it satisfies $\eta_k^\ell \in \operatorname{int}\mathcal E$ for all $k$ and $\ell$, then the following stability estimate holds
\begin{align}
\label{eq:stability}
\max_{k=1,\dots,N}\! \left(E(\eta_k^\ell) + \frac1N\sum_{i=1}^k \norm{\frac{\eta_k^\ell-\eta_{k-1}^\ell}{\tau}}_{L^2}^2\right)\! \leq\! \left(\sqrt{E(\eta_0)+\frac12\norm{\eta_*}_{L^2}^2}+\norm{f}_{L^2((0,h\ell)\times Q)} \right)^{\!\!2}\!\! (1+4C\tau h \ell).
\end{align}
\end{theorem}

\begin{proof}
As before to ease the notation, the norm $\norm{\cdot}$ without any index is the $L^2(Q)$ norm, and $\langle \cdot,\cdot\rangle$ is the $L^2(Q)$ scalar product (or dual pairing of $X$ and $X^*$ in the $DE$ terms).

We proceed by induction on $\ell$. Thus assume that the inequality \eqref{eq:stability} holds for $\ell-1$ (and every $k$), and we want to prove it for $\ell$. Analogous to the damped case we need the following auxiliary estimate for $k=1,\dots,N$:
\begin{equation}\label{eqn:hyper-estimate}
E(\eta_{k}^\ell)\leq K:= (1+4C\tau T)\left(\sqrt{E(\eta_0)+\frac12\norm{\eta_*}_{L^2}^2}+\norm{f}_{L^2((0, T)\times Q)} \right)^2 + h_0\norm{f}_{L^2((0,T)\times Q)}^2.  
\end{equation}
But this estimate does however follow line by line by the argument in the proof of Theorem~\ref{thm:stability-solid-stabilized}. Indeed, the damping part is not used for any of the absorbed terms.

Hence we assume that $\eta_k^\ell\in \operatorname{int} \mathcal{E}$, with  $E$ is differentiable at $\eta_k^\ell$ and we can test the minimizer with a uniform bound. Now taking $\frac{\eta_k^\ell-\eta_{k-1}^\ell}{\tau}$ as a test function gives
\begin{equation*}
\left\langle \frac{\frac{\eta_k^\ell-\eta_{k-1}^\ell}{\tau}-\frac{\eta_k^{\ell-1}-\eta_{k-1}^{\ell-1}}{\tau}}{h}, \frac{\eta_k^\ell-\eta_{k-1}^\ell}{\tau} \right\rangle+\left\langle DE(\eta_k^\ell),\frac{\eta_k^\ell-\eta_{k-1}^\ell}{\tau}\right\rangle=\left\langle f_k^\ell,\frac{\eta_k^\ell-\eta_{k-1}^\ell}{\tau}\right\rangle.
\end{equation*}
Using $a(a-b)=\frac{a^2}{2}-\frac{b^2}{2}+\frac{(a-b)^2}{2}$, we obtain for the first term
\begin{multline*}
\left\langle \frac{\frac{\eta_k^\ell-\eta_{k-1}^\ell}{\tau}-\frac{\eta_k^{\ell-1}-\eta_{k-1}^{\ell-1}}{\tau}}{h}, \frac{\eta_k^\ell-\eta_{k-1}^\ell}{\tau} \right\rangle\\
=  \frac{1}{2h}\norm{\frac{\eta_k^\ell-\eta_{k-1}^\ell}{\tau}}^2  + \frac{1}{2h}\norm{\frac{\eta_k^\ell-\eta_{k-1}^\ell}{\tau} - \frac{\eta_k^{\ell-1}-\eta_{k-1}^{\ell-1}}{\tau}}^2 -\frac{1}{2h}\norm{\frac{\eta_k^{\ell-1}-\eta_{k-1}^{\ell-1}}{\tau}}^2.
\end{multline*}

Now multiply by $\tau$, omit the middle term, use the non convexity estimate from \Cref{thm:E-noncovexity} and obtain
\begin{equation*}
\frac{\tau}{2h} \norm{\frac{\eta_k^\ell-\eta_{k-1}^\ell}{\tau}}^2
\!+E(\eta_k^\ell) \leq \frac{\tau}{2h}\norm{\frac{\eta_k^{\ell-1}-\eta_k^{\ell-1}}{\tau}}^2 \!+ E(\eta_{k-1}^\ell)+\tau\!\left\langle f_k^\ell,\frac{\eta_k^\ell-\eta_{k-1}^\ell}{\tau}\right\rangle + C\tau^2\norm{\frac{\eta_k^\ell-\eta_{k-1}^\ell}{\tau}}^2
\end{equation*}
where the constant $C$ depends only on $K$.

Using the inequality
\begin{equation*}
\tau\left\langle f_k^\ell, \frac{\eta_k^\ell-\eta_{k-1}^\ell}{\tau}\right\rangle \leq \tau \norm{f_k^\ell} \norm{\frac{\eta_k^\ell-\eta_{k-1}^\ell}{\tau}}
\end{equation*}
we get, denoting $a_k^\ell = E(\eta_k^\ell)$, $b_k^\ell =\frac12\norm{\frac{\eta_k^\ell-\eta_{k-1}^\ell}{\tau}}^2$, $c=2C\tau^2$, $d_k^\ell=\tau \norm{f_k^\ell}$ and $N=h/\tau$ the inequality
\begin{equation*}
a_k^\ell+\frac1N b_k^\ell\leq a_{k-1}^\ell + \frac1N b_k^{\ell-1} + c b_k^\ell + d_k^\ell \sqrt{b_k^\ell}.
\end{equation*}
Now we are in a position to use the Two-scale Gronwall inequality with square root \Cref{twoscale-gronwall-sqrt}. Thus, provided $c<\frac1N$ (i.e. $\tau h <\frac{1}{2C}$, which is guaranteed by $h<h_0:=\sqrt{2C}$), we obtain
$$
\max_{k=1,\dots,N} \left( a_k^\ell + \frac1N\sum_{i=1}^k b_i^\ell\right) \leq \left(\sqrt{a_0^0+b_0^0}+\frac{1}{\sqrt{1-cN}}\sum_{l=1}^\ell\sqrt{N \sum_{k=1}^N (d_k^\ell)^2}\right)^2 (1-cN)^{-\ell},\quad \ell=1,\dots,M-1.
$$
Further, for $c\leq\frac{1}{2N}$ (i.e. $\tau h \leq\frac{1}{4C}$, which is guaranteed by $h\leq h_0:=\frac{1}{2\sqrt C}$) we have $(1-cN)^{-\ell}\leq 1+2cN\ell$, so that this implies
$$
\max_{k=1,\dots,N} \left(E(\eta_k^\ell) + \frac1N\sum_{i=1}^k \norm{\frac{\eta_k^\ell-\eta_{k-1}^\ell}{\tau}}^2\right) \leq \left(\sqrt{E(\eta_0)+\frac12\norm{\eta_*}^2}+\sum_{l=1}^\ell h \sqrt{\frac1N \sum_{k=1}^N \|f_k^\ell\|^2}\right)^2 ( 1+4C\tau h \ell).
$$

Finally using \Cref{lem:approx-rhs-convergence}, from which we know, using a Jensen inequality
\begin{equation*}
\sum_{l=1}^\ell h \sqrt{\frac1N \sum_{k=1}^N \|f_k^\ell\|^2} \leq 
\sqrt{\sum_{l=1}^\ell  \sum_{k=1}^N \tau \|f_k^\ell\|^2}  \leq \norm{f}_{L^2((0,h\ell)\times Q)}
\end{equation*}
the proof is finished.

\end{proof}

\subsection{Stability estimate without stabilzing term--general}
The conditions in Assumption~\ref{assumptions-general}, replacing $(A.5)$ by $(A.5')$ are chosen precisely in such a way that we have the same stability result. Hence the proof is the same as the proof for elastic solids after making the obvious changes, therefore we omit it. And we reach with the following estimate:
\begin{theorem}
There exists a $h_0>0$ and $C>0$ depending on $E(\eta_0)$, $\norm{\eta_*}_H$, $\norm{f}_{L^2((0,T);H)}$, the assumptions on $E$ and $T$ such that for all $N\tau=h\leq h_0$ if the corresponding approximation $\eta_k^\ell$ satisfies $\eta_k^\ell \in \operatorname{int} \mathcal E$ we have the stability estimate
\begin{equation*}
\max_{k=1,\dots,N} \left(E(\eta_k^\ell) +\frac{1}{2N} \sum_{i=1}^k\norm{\frac{\eta_i^\ell-\eta_{i-1}^\ell}{\tau}}_H^2 \right) \leq  \left(\sqrt{E(\eta_0)+\frac12 \norm{\eta_*}_H^2}+ \frac12\norm{f}_{L^2((0,T);H)}\right)^2(1+4C\tau h\ell),
\end{equation*}
for all $\ell$, with $\ell h\leq T$.
\end{theorem}

\begin{remark} Another possibility, in case \eqref{as:E2-unif-convex} holds, we can achieve stabilization by using $c\tau \partial_t\hat\eta_{(\tau)}^{(h)}$, that is the minimization
$$\eta_k^{\ell} = \argmin_{\eta\in\mathcal E}\frac{\tau h}{2}\norm{\frac{\frac{\eta-\eta_{k-1}^\ell}{\tau}-\frac{\eta_k^{\ell-1}-\eta_{k-1}^{\ell-1}}{\tau}}{h}
}_{L^2}^2+E(\eta) +\frac{c\tau^2}{2}\norm{\frac{\eta_k^\ell-\eta_k^{\ell-1}}{\tau}}^2 -\langle f_k^\ell , \eta\rangle.
$$
The proof of the stability goes through the same way as in \Cref{thm:stability-solid-stabilized}, utilizing the estimate of \Cref{thm:E-noncovexityii} instead of \Cref{thm:E-noncovexity}, and gives same estimate as in \Cref{thm:stability-solid-stabilized}.
\end{remark}

\section{Convergence rate}\label{sec:convergence}
Now our focus will be on quantifying the convergence results, namely showing that the under some regularity conditions, our scheme \emph{converges to the solution with a linear rate}. Here we focus on the model case of elastic  energies. This means we stick to \Cref{assumptions-energy-solid}.
Further, throughout the entire section we assume the $W^{k_0,2}$-case, that is
\begin{equation}\label{eqn:Wk02-setting}
X=W^{k_0,2}(Q), \quad\text{and} \quad E_2(\eta)=\frac12\norm{\nabla^{k_0} \eta}_{L^2(Q)}^2.
\end{equation}

The convergence analysis here can also be applied to more abstract settings. For that one needs to assume besides Assumption~\ref{assumptions-general} that the estimates in Lemma~\ref{lem:ineq-DE1} and \Cref{lem:ineq-DE2} have to be satisfied. In particular this follows when additional to Assumption~\ref{assumptions-general} the following two assumptions are made:
\begin{enumerate}
\item[(A.6)] The limit solution $\eta$ satisfies
\begin{equation*}
\norm{\fint_0^a DE(\eta(t+s)) -DE(\eta(t))\dx[s]}_{X^*}\leq Ca
\end{equation*}
for all $t\in [0,T-a]$ and $a\in [0,\tau_0]$.
\item[(A.7)] For all $\alpha,\beta \in \mathcal E$, $\gamma \in X$ 
\begin{equation*}
\langle DE(\alpha)-DE(\beta)), \gamma\rangle\leq C\norm{\alpha-\beta}_X\norm{\gamma}_H
\end{equation*}
are satisfied.
\end{enumerate}
Please note that these assumptions are true for a large class of problems including the ODE examples in the numeric section.

\subsection{Time-regularity}
\label{ssec:timereg}
In order to indicate the validity of the convergence analysis we first include here some higher order a-priori estimates for smooth solutions. Our technique is to introduce the dissipation term of the form $\epsilon(-\Delta)^{k_0}\partial_t\eta_\epsilon$ and then remove the term after having obtained uniform estimates.

We have, for each $\epsilon>0$, given 
\begin{equation}\label{eqn:convrate-ic-rhs}
 \eta_0\in \mathcal{E},\quad \eta_* \in L^2(Q),\quad f\in L^2((0,T);L^2(Q)),
 \end{equation} a solution $\eta_\epsilon$ of 
\begin{align}
\begin{aligned}\label{eqn:eta-epsilon-eqn}
\partial_{tt}\eta_\epsilon + (-\Delta)^{k_0}\eta_\epsilon +\epsilon (-\Delta)^{k_0}\partial_t\eta_\epsilon -\div (\nabla_\xi e(\nabla\eta_\epsilon))&= f \\
\eta_\epsilon(t) &\in \mathcal E, \quad t\in(0,T)\\
\eta_\epsilon(0)&=\eta_0\\
\partial_t\eta_\epsilon (0) &=\eta_*\\
\eta_\epsilon(t,x)&= x,\quad x\in \Gamma_D\\
\partial_\nu \eta_\epsilon(t,x) &= \nu(x), \quad x\in \Gamma_N.
\end{aligned}
\end{align}
provided no collision happens in the time interval $(0,T)$.

By the previous, we have the existence of $\eta_\epsilon$ with \begin{equation*}
\eta_\epsilon\in L^\infty((0,T);W^{k_0,2}(Q)),\quad \partial_t\eta_\epsilon \in L^\infty((0,T);L^2(Q)),\quad \partial_t \eta_\epsilon \in L^2((0,T);W^{k_0,2}(Q))
\end{equation*}
satisfying the estimates
\begin{equation}\label{eqn:eta-epsilon-estimates}
\norm{\eta_\epsilon}_{L^\infty((0,T);W^{k_0,2}(Q))},\norm{\partial_t\eta_\epsilon}_{L^\infty((0,T);L^2(Q))}, \sqrt \epsilon\norm{\partial_t \eta_\epsilon}_{  L^2((0,T);W^{k_0,2}(Q))}\leq C(\eta_0,\eta_*,f),
\end{equation}
here and further $C(\dots)$ is a constant depending on the parameters in parenthesis.
Indeed one can use the approximation 
\begin{equation*}
\eta_k^{\ell} = \argmin_{\eta\in\mathcal E}\frac{\tau h}{2}\norm{\frac{\frac{\eta-\eta_{k-1}^\ell}{\tau}-\frac{\eta_k^{\ell-1}-\eta_{k-1}^{\ell-1}}{\tau}}{h}
}_{L^2}^2 +\epsilon\frac{\tau}{2}\norm{\nabla^{k_0}\frac{\eta-\eta_{k-1}^\ell}{\tau}}_{L^2}^2+E(\eta) -\langle f_k^\ell , \eta\rangle,
\end{equation*}
with the same initial conditions. Following step by step the argument in \Cref{thm:stability-solid} uniform stability estimates are available with the dissipation additionally appearing on the left hand side. Hence by the usual weak compactness results and compact embeddings a solution to \eqref{eqn:eta-epsilon-estimates} can be established by letting $\tau,h\to 0$.

We now investigate higher time regularity, given regularity of initial conditions and the right hand side. In particular we note which estimate do or do not depend on $\epsilon$. For ease of notation let us denote \begin{equation*}
W^{k_0,2}_D(Q):= \{\eta\in W^{k_0,2}(Q): \eta(x)=x\text{ for }x\in \Gamma_D\}.
\end{equation*} We begin with the following lemma.

\begin{lemma}\label{lem:beta-regularity}
The following problem for $\beta$ 
\begin{align*}
\begin{aligned}
\partial_{tt}\beta +(-\Delta)^{k_0} \beta +\epsilon (-\Delta)^{k_0}\partial_t\beta &= g ,\quad \text{in }(0,T) \\
\beta(0)&=\beta_0\\
\partial_t \beta(0)&=\beta_* 
\end{aligned}
\end{align*}
has, for given data
\begin{equation*}
g \in L^2((0,T);W^{-1,2}(Q)),\quad \beta_0\in W^{k_0,2}_D(Q),\quad \beta_* \in L^2(Q)
\end{equation*}
a unique solution $\beta$ with
\begin{equation*}
\beta \in L^\infty((0,T);W^{k_0,2}_D(Q)), \quad \partial_t\beta \in L^\infty((0,T);L^2(Q)),\quad \partial_t\beta \in L^2((0,T);W^{k_0,2}(Q))
\end{equation*}
satisfying the estimate
\begin{equation}\label{eqn:beta-estimate}
\norm{\beta}_{L^\infty((0,T);W^{k_0,2}(Q))}, \norm{\partial_t \beta}_{L^\infty((0,T);L^2(Q))}, \sqrt \epsilon \norm{\partial_t\beta}_{L^2((0,T);W^{k_0,2}(Q))}\leq C = C(\epsilon,\beta_0,\beta_*,g).
\end{equation}
\end{lemma}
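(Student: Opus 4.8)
The equation is linear in $\beta$, with the uniformly convex (indeed quadratic) leading term $(-\Delta)^{k_0}$ and the parabolic-type regularizing term $\epsilon(-\Delta)^{k_0}\partial_t$, so the natural route is a Galerkin approximation together with energy estimates. First I would fix an orthonormal basis $(w_j)$ of $L^2(Q)$ consisting of eigenfunctions of $(-\Delta)^{k_0}$ on $W^{k_0,2}_D(Q)$ (with the Navier-type boundary conditions), which is simultaneously orthogonal in $W^{k_0,2}(Q)$. Seeking $\beta_m(t)=\sum_{j=1}^m c_j^m(t)w_j$ solving the projected system yields a linear system of ODEs for the $c_j^m$ with continuous (in fact $L^2$-in-time) coefficients, hence a unique solution on $[0,T]$ by Carathéodory/Picard–Lindelöf; the initial data are taken as the $L^2$- (resp. $W^{k_0,2}_D$-) projections of $\beta_*$ and $\beta_0$.

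The core is the a priori estimate. Testing the projected equation with $\partial_t\beta_m$ gives, after using $\langle (-\Delta)^{k_0}\beta_m,\partial_t\beta_m\rangle=\tfrac12\partial_t\norm{\nabla^{k_0}\beta_m}_{L^2}^2$ and $\langle \epsilon(-\Delta)^{k_0}\partial_t\beta_m,\partial_t\beta_m\rangle=\epsilon\norm{\nabla^{k_0}\partial_t\beta_m}_{L^2}^2$,
\begin{equation*}
\frac12\partial_t\left(\norm{\partial_t\beta_m}_{L^2}^2+\norm{\nabla^{k_0}\beta_m}_{L^2}^2\right)+\epsilon\norm{\nabla^{k_0}\partial_t\beta_m}_{L^2}^2=\langle g,\partial_t\beta_m\rangle.
\end{equation*}
The right-hand side I would bound by $\norm{g}_{W^{-1,2}}\norm{\partial_t\beta_m}_{W^{1,2}}$, and here one uses the dissipation: interpolating $W^{1,2}$ between $L^2$ and $W^{k_0,2}$ and absorbing the top-order part into $\epsilon\norm{\nabla^{k_0}\partial_t\beta_m}_{L^2}^2$ via Young's inequality (paying a factor $C(\epsilon)$ on the $L^2$-part), then closing with Gronwall in $t$. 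This yields exactly the three bounds in \eqref{eqn:beta-estimate}, uniform in $m$, with constant $C(\epsilon,\beta_0,\beta_*,g)$. Passing to the limit $m\to\infty$ is routine: weak-$*$ limits in $L^\infty((0,T);W^{k_0,2}_D)$ and $W^{1,\infty}((0,T);L^2)$, weak limit of $\partial_t\beta_m$ in $L^2((0,T);W^{k_0,2})$, linearity of all terms, and the Aubin–Lions lemma to identify nonlinear-free but product-with-test-function terms and the initial data (continuity in time into $L^2$ gives $\beta(0)=\beta_0$, $\partial_t\beta(0)=\beta_*$). Lower semicontinuity of norms under weak convergence transfers the estimate to the limit.

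Uniqueness follows from linearity: the difference $\beta$ of two solutions solves the homogeneous problem with zero data; testing (rigorously, in the Galerkin-consistent weak formulation, or via a Steklov/time-mollification since $\partial_t\beta$ is only $L^2$ in $W^{k_0,2}$ — this is the one technical subtlety) with $\partial_t\beta$ gives $\partial_t(\norm{\partial_t\beta}_{L^2}^2+\norm{\nabla^{k_0}\beta}_{L^2}^2)\le 0$, hence $\beta\equiv 0$. \textbf{The main obstacle} I anticipate is not conceptual but the careful justification of the energy identity for the limit solution (and in the uniqueness argument), since $\partial_t\beta$ is only in $L^2((0,T);W^{k_0,2})\cap L^\infty((0,T);L^2)$ while $\partial_{tt}\beta$ lives in a negative-order space; one resolves this either by keeping all energy computations at the Galerkin level and only passing to the limit in the integrated inequality, or by a standard time-mollification argument exploiting the parabolic regularization to make $\partial_t\beta$ an admissible test function.
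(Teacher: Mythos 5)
Your proposal follows essentially the same route as the paper: a Galerkin approximation in a basis orthonormal in $L^2$ and orthogonal in $W^{k_0,2}$, testing with $\partial_t\beta_m$ to obtain the energy identity, bounding $\langle g,\partial_t\beta_m\rangle$ via $\norm{g}_{W^{-1,2}}$ and absorbing the resulting gradient term into the $\epsilon$-dissipation, then Gronwall, weak-$*$ limits, and uniqueness by linearity. The proof is correct; your extra care about justifying the energy identity at the Galerkin level rather than for the limit solution is exactly the right way to handle the one technical point.
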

\begin{proof}
We show the statement by Galerkin approximation.
Let $\{w_k\}_{k\in\N}\subset W^{k_0,2}(Q)$ be an orthogonal basis, moreover orthonormal in $L^2(Q)$. Then for $n\in \N$ we solve the following system of ODE
\begin{align*}
\alpha _k''(t)+\alpha_k(t)\norm{\nabla^{k_0} w_k}_{L^2}^2+\alpha_k'(t) \epsilon \norm{\nabla^{k_0} w_k}_{L^2}^2 &= \langle g,w_k\rangle, \quad k=1,\dots,n.\\
\alpha_k(0)&= \langle \beta_0,w_k\rangle_{L^2},\\
\alpha_k'(0)&= \langle\beta_*,w_k\rangle_{L^2}.
\end{align*}
The existence of absolutely continuous solutions $\alpha_k\colon (0,T)\to \R$ is standard theory of ODE. Then $\beta_n(t)=\sum_{k=1}^n \alpha_k(t)w_k$ solves the equation
\begin{align*}
\langle \partial_{tt}\beta_n, w_k\rangle_{L^2}+\langle\nabla^{k_0}\beta_n, \nabla^{k_0} w_k\rangle_{L^2}+\epsilon\langle\nabla^{k_0} \partial_t \beta_n, \nabla^{k_0} w_k \rangle_{L^2} &= \langle g, w_k\rangle ,\quad k=1,\dots,n, \\
\langle\beta_n(0),w_k\rangle_{L^2}&= \langle\beta_0,w_k\rangle_{L^2},\\
\langle\partial_t\beta_n(0),w_k\rangle_{L^2}&=\langle\beta_*,w_k\rangle_{L^2}.
\end{align*}
Now we multiply by $\alpha_k'$ and sum for $k=1,\dots,n$ (i.e. use $\partial_t \beta_n$ as a test function), so we obtain
\begin{equation*}
\frac12 \partial_t \norm{\partial_t \beta_n}_{L^2}^2+\frac12 \partial_t\norm{\nabla^{k_0}\beta_n}_{L^2}^2+\epsilon \norm{\nabla^{k_0}\partial_t\beta_n}_{L^2}^2 = \langle g,\partial_t\beta_n\rangle _{L^2}\leq \frac{1}{2\epsilon} \norm{g}_{W^{-1,2}}^2 + \frac{\epsilon}{2}\norm{\nabla \partial_t\beta_n}_{L^2}^2
\end{equation*}
Absorbing last term, using Gronwall inequality gives
\begin{equation*}
\frac12 \norm{\partial_t \beta_n}_{L^\infty((0,T);L^2)}^2+\frac12 \norm{\nabla^{k_0}\beta_n}_{L^\infty((0,T);L^2)}^2+\frac{\epsilon}{2} \norm{\nabla^{k_0}\partial_t\beta_n}_{L^2((0,T);L^2)}^2 \leq C(\epsilon,\beta_0,\beta_*,g). 
\end{equation*}
Passing $n\to \infty$ gives the result.
Finally, uniqueness of the solution can be readily seen from linearity.
\end{proof}

The lemma will now be used to show a better regularity of $\eta_\epsilon$.
\begin{lemma}\label{lem:epsilon-dependent}
Let it further be satisfied that \begin{equation}\label{eta0w2k0}
\eta_0\in W^{2k_0,2}(Q),\quad \eta_*\in W^{ 2k_0,2}(Q), \quad \partial_t f\in W^{1,2}((0,T);L^2(Q)).
\end{equation}
Let $\eta_\epsilon$ be a solution of 
\begin{align*}
\begin{aligned}
\partial_{tt}\eta_\epsilon + (-\Delta)^{k_0}\eta_\epsilon +\epsilon (-\Delta)^{k_0}\partial_t\eta_\epsilon -\div (\nabla_\xi e(\nabla\eta_\epsilon))&= f,\\
\eta_\epsilon(0)&=\eta_0,\\
\partial_t\eta_\epsilon (0) &=\eta_*.
\end{aligned}
\end{align*}
Then it holds
\begin{equation*}
\partial_t \eta_\epsilon \in L^\infty((0,T);W^{k_0,2}(Q)), \quad \partial_{tt}\eta_\epsilon \in L^\infty((0,T);L^2(Q)),\quad \partial_{tt}\eta_\epsilon \in L^2((0,T);W^{k_0,2}(Q))
\end{equation*}
with the estimate
\begin{equation*}
\norm{\partial_t\eta_\epsilon}_{L^\infty((0,T);W^{k_0,2}(Q))}, \norm{\partial_{tt} \eta_\epsilon}_{L^\infty((0,T);L^2(Q))}, \sqrt \epsilon \norm{\partial_{tt}\eta_\epsilon}_{L^2((0,T);W^{k_0,2}(Q))}\leq  C(\epsilon,\eta_0,\eta_*,f).
\end{equation*}

\end{lemma}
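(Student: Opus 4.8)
The plan is to differentiate the equation for $\eta_\epsilon$ in time and apply \Cref{lem:beta-regularity} to $\beta := \partial_t \eta_\epsilon$. Formally, differentiating
\[
\partial_{tt}\eta_\epsilon + (-\Delta)^{k_0}\eta_\epsilon + \epsilon(-\Delta)^{k_0}\partial_t\eta_\epsilon - \div(\nabla_\xi e(\nabla\eta_\epsilon)) = f
\]
with respect to $t$ and setting $\beta = \partial_t\eta_\epsilon$ yields
\[
\partial_{tt}\beta + (-\Delta)^{k_0}\beta + \epsilon(-\Delta)^{k_0}\partial_t\beta = \partial_t f + \div\big(\nabla_\xi^2 e(\nabla\eta_\epsilon):\nabla\partial_t\eta_\epsilon\big) =: g,
\]
with initial data $\beta_0 = \eta_*$ and $\beta_* = \partial_{tt}\eta_\epsilon(0)$. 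The latter must be read off the equation at $t=0$: $\beta_* = f(0) - (-\Delta)^{k_0}\eta_0 - \epsilon(-\Delta)^{k_0}\eta_* + \div(\nabla_\xi e(\nabla\eta_0))$, which lies in $L^2(Q)$ precisely because $\eta_0,\eta_*\in W^{2k_0,2}(Q)$ (using \Cref{lem:e-density-bounded} for the lower-order term, noting $\eta_0\in\mathcal E$ with finite energy). Here $f(0)$ makes sense because $\partial_t f\in W^{1,2}((0,T);L^2)\hookrightarrow C([0,T];L^2)$ implies $f\in C^1([0,T];L^2)$.

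\textbf{Main steps.} First I would verify that $g\in L^2((0,T);W^{-1,2}(Q))$ with a bound depending on $\epsilon$: the term $\partial_t f$ is controlled by \eqref{eta0w2k0}, while for $\div(\nabla_\xi^2 e(\nabla\eta_\epsilon):\nabla\partial_t\eta_\epsilon)$ one pairs with a test function $\varphi\in W^{1,2}_0$, integrates by parts to move the divergence onto $\varphi$, and estimates $\|\nabla_\xi^2 e(\nabla\eta_\epsilon)\|_{L^\infty}$ by \Cref{lem:e-density-bounded} (valid since the stability estimate gives uniform energy bounds), leaving $\|\nabla\partial_t\eta_\epsilon\|_{L^2}$, which is in $L^2((0,T);L^2)$ only after using the $\epsilon$-dependent bound $\sqrt\epsilon\|\partial_t\eta_\epsilon\|_{L^2((0,T);W^{k_0,2})}\leq C$ from \eqref{eqn:eta-epsilon-estimates} --- this is why the final constant is $\epsilon$-dependent. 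Second, I would apply \Cref{lem:beta-regularity} with this $g$, $\beta_0 = \eta_*\in W^{k_0,2}_D(Q)$ (which holds since $\eta_*\in W^{2k_0,2}\subset W^{k_0,2}$ and $\eta_*|_{\Gamma_D}=0$ as a compatibility condition on the data, the boundary being kept fixed), and $\beta_*\in L^2(Q)$ as computed above, obtaining
\[
\|\partial_t\eta_\epsilon\|_{L^\infty((0,T);W^{k_0,2})},\ \|\partial_{tt}\eta_\epsilon\|_{L^\infty((0,T);L^2)},\ \sqrt\epsilon\|\partial_{tt}\eta_\epsilon\|_{L^2((0,T);W^{k_0,2})}\leq C(\epsilon,\eta_0,\eta_*,f).
\]
Third, to make the differentiation rigorous rather than formal, I would argue at the level of the Galerkin approximation used to construct $\eta_\epsilon$ (or via difference quotients in time): differentiate the ODE system, test with the time derivative of the Galerkin coefficients, and pass to the limit; alternatively, use difference quotients $\tfrac{1}{\delta}(\eta_\epsilon(\cdot+\delta)-\eta_\epsilon(\cdot))$, derive $\delta$-uniform bounds from the same energy estimate, and let $\delta\to 0$.

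\textbf{Main obstacle.} The principal difficulty is the nonlinear term $\div(\nabla_\xi^2 e(\nabla\eta_\epsilon):\nabla\partial_t\eta_\epsilon)$ appearing in $g$ after differentiation: it involves $\nabla\partial_t\eta_\epsilon$, for which only the $\epsilon$-weighted bound is a priori available. One must therefore be careful to place $g$ in $W^{-1,2}$ (not $L^2$) so that only one spatial derivative of $\partial_t\eta_\epsilon$ is needed, and to track that the resulting constant degenerates as $\epsilon\to 0$ --- which is acceptable at this stage, since the $\epsilon$-uniform estimates are recovered in the subsequent lemmas. A secondary technical point is the justification of the compatibility $\eta_*|_{\Gamma_D}=0$ and the well-definedness of $\partial_{tt}\eta_\epsilon(0)$ in $L^2$; both follow from \eqref{eta0w2k0} together with the fixed-boundary condition, but should be stated explicitly. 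The rigorous differentiation-in-time (Galerkin or difference quotient) is routine once the bound on $g$ is in hand, so I would not dwell on it.
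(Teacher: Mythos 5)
Your proposal is essentially the same strategy as the paper's: isolate the linear problem for $\beta=\partial_t\eta_\epsilon$ with exactly the same right-hand side $g$, the same initial data $\beta_0=\eta_*$ and $\beta_*=f(0)-(-\Delta)^{k_0}\eta_0-\epsilon(-\Delta)^{k_0}\eta_*+\div(\nabla_\xi e(\nabla\eta_0))$, and then invoke \Cref{lem:beta-regularity}. You also correctly identify that the nonlinear forcing term $\div(\nabla_\xi^2 e(\nabla\eta_\epsilon):\nabla\partial_t\eta_\epsilon)$ lives only in $W^{-1,2}$ and that its $L^2_t$ integrability requires the $\epsilon$-weighted estimate $\sqrt\epsilon\|\partial_t\eta_\epsilon\|_{L^2(W^{k_0,2})}\le C$ from \eqref{eqn:eta-epsilon-estimates}, which is the source of the $\epsilon$-dependence in the constant; this is exactly the paper's reasoning. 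Your expansion $\nabla_\xi^2 e(\nabla\eta_\epsilon):\nabla\partial_t\eta_\epsilon$ is actually written out more carefully than the paper's slightly abbreviated notation.

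The one genuine divergence is how you close the argument. You propose to justify the formal time-differentiation either by differentiating a Galerkin system or by difference quotients. Two remarks. First, $\eta_\epsilon$ is not constructed by Galerkin in this paper: it comes from the minimizing-movements scheme with added dissipation, so ``differentiate the Galerkin system for $\eta_\epsilon$'' is not directly available and you would have to redo the argument at the level of that scheme or use difference quotients. Second, the paper sidesteps this entirely with a cleaner bootstrap: it never differentiates the nonlinear equation. Instead it lets \Cref{lem:beta-regularity} produce the unique solution $\beta$ of the \emph{linear} auxiliary problem (with the $\eta_\epsilon$-dependent data fixed), defines $\tilde\eta_\epsilon(t)=\eta_0+\int_0^t\beta$, integrates the $\beta$-equation in time to show $\tilde\eta_\epsilon$ solves the same linear equation as $\eta_\epsilon$ (with the nonlinearity frozen as a given right-hand side), and concludes $\tilde\eta_\epsilon=\eta_\epsilon$, hence $\beta=\partial_t\eta_\epsilon$, by uniqueness for that linear problem. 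This ``define-and-verify'' trick is what makes the rigor essentially free and is worth adopting in place of your difference-quotient/Galerkin route, which, while workable, would require a more careful bookkeeping of uniform-in-$\delta$ (or uniform-in-Galerkin-dimension) estimates for the nonlinear term.
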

\begin{proof}
Consider the problem for $\beta$ 
\begin{align}
\begin{aligned}\label{eqn:beta-inproof}
\partial_{tt}\beta +(-\Delta)^{k_0} \beta +\epsilon (-\Delta)^{k_0}\partial_t\beta &= \partial_t f +\div(\nabla_\xi e(\nabla\eta_\epsilon)\nabla\partial_t\eta_\epsilon) \\
\beta(0)&=\eta_*\\
\partial_t \beta(0)&=f(0)-(-\Delta)^{k_0}\eta_0- \epsilon (-\Delta)^{k_0}\eta_*+\div(\nabla_\xi e(\nabla \eta_0)).
\end{aligned}
\end{align}
We are in a position to use \Cref{lem:beta-regularity} with
\begin{equation*}
g=\partial_t f +\div(\nabla_\xi e(\nabla\eta_\epsilon)\nabla\partial_t\eta_\epsilon),\quad \beta_0=\eta_*,\quad \beta_*= f(0)-(-\Delta)^{k_0}\eta_0- \epsilon (-\Delta)^{k_0}\eta_*+\div(\nabla_\xi e(\nabla \eta_0)).
\end{equation*}
Note that $g$, $\beta_0$, $\beta_*$ lie in the correct spaces to apply \Cref{lem:beta-regularity}, as $\nabla_\xi e(\nabla\eta_\epsilon)\in L^\infty((0,T)\times Q)$ and $\nabla\partial_t\eta_\epsilon \in L^2((0,T)\times Q)$ implies $g\in L^2((0,T);W^{-1,2}(Q))$, and also assumptions \eqref{eta0w2k0} imply $\beta_*\in L^2(Q)$. Thus \Cref{lem:beta-regularity} gives existence of $\beta$ with the respective estimates \eqref{eqn:beta-estimate}.

Now we need to check that $\beta=\partial_t\eta_\epsilon$. For this we define 
\begin{equation*}
\tilde \eta_\epsilon(t):= \eta_0 + \int_0^t \beta\dt.
\end{equation*}
Clearly $\partial_t\tilde{\eta}_\epsilon=\beta$ and now we need to check that $\tilde{\eta_\epsilon}=\eta_\epsilon$. We will show this by arguing that $\tilde{\eta_\epsilon}$ solves the same linear equation as $\eta_\epsilon$.

For this we integrate the equation \eqref{eqn:beta-inproof} over $(0,t)$ to obtain
\begin{multline*}
\partial_t\beta(t)-\partial_t\beta(0)+ (-\Delta)^{k_0}\tilde \eta_\epsilon(t)-(-\Delta)^{k_0} \eta_0 + \epsilon(-\Delta^{k_0})\beta(t)-\epsilon(-\Delta^{k_0})\beta_0 \\
= f(t)-f(0) + \div(\nabla_\xi e(\nabla\eta_\epsilon(t))- \div(\nabla_\xi e(\nabla\eta_0)).
\end{multline*}
Using the initial conditions on $\beta$ we thus conclude that $\nabla\eta_\epsilon$ solves the linear equation with initial conditions
\begin{align*}
\partial_{tt}\tilde{\eta_\epsilon} + (-\Delta)^{k_0} \tilde{\eta_\epsilon} +\epsilon(-\Delta^{k_0})\partial_t\tilde{\eta_\epsilon} &= f +\div(\nabla_\xi e(\nabla\eta_\epsilon)) \\
\tilde{\eta_\epsilon}(0)&=\eta_0\\
\partial_t\tilde{\eta_\epsilon}(0)&=\eta_*.
\end{align*}
Since $\eta_\epsilon$ solves the same linear equation and its solutions are unique, it follows that $\eta_\epsilon=\tilde{\eta_\epsilon}$. Thus $\beta =\partial_t \eta_\epsilon$ and the proof is finished.
\end{proof}

We now can prove the time regularity estimate for $\eta_\epsilon$, which is \emph{independent} of $\epsilon$.

\begin{theorem}[Time regularity]
\label{thm:timereg}
Let it further be satisfied \begin{equation*}
\eta_0\in W^{2k_0,2}(Q),\quad \eta_*\in W^{ 2 k_0,2}(Q), \quad \partial_t f\in L^2((0,T);L^2(Q)).
\end{equation*}
Then the solution $\eta_\epsilon$ of \eqref{eqn:eta-epsilon-eqn} satisfies
\begin{equation*}
\partial_{tt}\eta_\epsilon \in L^\infty((0,T);L^2(Q)), \quad \partial_t \eta_\epsilon\in L^\infty((0,T);W^{k_0,2}(Q))
\end{equation*}
with the $\epsilon$-independent estimate
\begin{equation*}
\norm{\partial_{tt}\eta_\epsilon}_{L^\infty((0,T);L^2(Q))}+ \norm{\partial_t \eta_\epsilon}_{L^\infty((0,T);W^{k_0,2}(Q))}\leq C = C(\eta_0,\eta_*,f).
\end{equation*}
\end{theorem}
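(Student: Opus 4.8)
The plan is to differentiate the equation \eqref{eqn:eta-epsilon-eqn} in time and run an energy estimate on $v:=\partial_t\eta_\epsilon$, testing with $\partial_t v=\partial_{tt}\eta_\epsilon$, while tracking carefully which bounds are independent of $\epsilon$. By \Cref{lem:epsilon-dependent} we already know $v\in L^\infty((0,T);W^{k_0,2})$ and $\partial_t v\in L^\infty((0,T);L^2)\cap L^2((0,T);W^{k_0,2})$, so the differentiated equation
\[
\partial_{tt}v+(-\Delta)^{k_0}v+\epsilon(-\Delta)^{k_0}\partial_t v-\div\!\big(\nabla^2_\xi e(\nabla\eta_\epsilon):\nabla v\big)=\partial_t f
\]
holds weakly and $\partial_t v$ is an admissible test function (to be fully rigorous one runs the estimate on the Galerkin level underlying the construction of $\eta_\epsilon$, and approximates $f$ if only $\partial_t f\in L^2((0,T);L^2)$ is available). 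The $\epsilon$-uniform inputs are: \Cref{thm:stability-solid} applied to the $\epsilon$-regularized scheme gives $\sup_t E(\eta_\epsilon(t))\le K$ with $K$ independent of $\epsilon$; hence by \eqref{as:det-lower-bound} we have $\det\nabla\eta_\epsilon\ge\epsilon_0>0$, and arguing as in \Cref{lem:e-density-bounded} (using that $e$ is smooth on $\R^{n\times n}_{\det>0}$, as for the prototypical density) both $\nabla^2_\xi e(\nabla\eta_\epsilon)$ and $\nabla^3_\xi e(\nabla\eta_\epsilon)$ are bounded in $L^\infty((0,T)\times Q)$ uniformly in $\epsilon$; finally $\|v\|_{L^\infty((0,T);L^2)}\le C$ uniformly by \eqref{eqn:eta-epsilon-estimates}.

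Testing with $\partial_t v$: the term $\langle\partial_{tt}v,\partial_t v\rangle$ is $\tfrac12\tfrac{d}{dt}\|\partial_t v\|^2_{L^2}$, the term $\langle(-\Delta)^{k_0}v,\partial_t v\rangle$ is $\tfrac12\tfrac{d}{dt}\|\nabla^{k_0}v\|^2_{L^2}$, and $\epsilon\langle(-\Delta)^{k_0}\partial_t v,\partial_t v\rangle=\epsilon\|\nabla^{k_0}\partial_t v\|_{L^2}^2\ge0$ is discarded. The delicate term is the elastic one: after integration by parts in space and the identity $a\cdot\partial_t b=\partial_t(a\cdot b)-(\partial_t a)\cdot b$,
\[
\big\langle\nabla^2_\xi e(\nabla\eta_\epsilon):\nabla v,\nabla\partial_t v\big\rangle=\frac{d}{dt}\,\frac12\big\langle\nabla^2_\xi e(\nabla\eta_\epsilon):\nabla v,\nabla v\big\rangle-\frac12\big\langle\partial_t[\nabla^2_\xi e(\nabla\eta_\epsilon)]:\nabla v,\nabla v\big\rangle ,
\]
where $\partial_t[\nabla^2_\xi e(\nabla\eta_\epsilon)]=\nabla^3_\xi e(\nabla\eta_\epsilon):\nabla v$ is bounded pointwise by $C|\nabla v|$. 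Writing $Y(t):=\|\partial_t v\|_{L^2}^2+\|\nabla^{k_0}v\|_{L^2}^2$ and $R(t):=\tfrac12\langle\nabla^2_\xi e(\nabla\eta_\epsilon):\nabla v,\nabla v\rangle$, and using Young's inequality on $\langle\partial_t f,\partial_t v\rangle$, this yields
\[
\frac{d}{dt}\big(Y+2R\big)\le \|\partial_t f\|_{L^2}^2+\|\partial_t v\|_{L^2}^2+C\!\int_Q|\nabla v|^3\dx .
\]

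It remains to close the estimate. Since $k_0>n/2+1$, we have $W^{k_0-1,2}(Q)\hookrightarrow L^\infty(Q)$, hence $\|\nabla v\|_{L^\infty}\le C\|v\|_{W^{k_0,2}}$ and $\int_Q|\nabla v|^3\le\|\nabla v\|_{L^\infty}\|\nabla v\|_{L^2}^2\le C\|v\|_{W^{k_0,2}}\|\nabla v\|_{L^2}^2$. Interpolating $\|\nabla v\|_{L^2}\le\|v\|_{W^{1,2}}\le C\|v\|_{L^2}^{1-1/k_0}\|v\|_{W^{k_0,2}}^{1/k_0}$ and using the uniform bound on $\|v\|_{L^2}$ gives $\int_Q|\nabla v|^3\le C\|v\|_{W^{k_0,2}}^{1+2/k_0}\le C(1+\|v\|_{W^{k_0,2}}^2)$, the last inequality because $k_0\ge2$ (which is forced by $k_0>n/2+1$). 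By the equivalence of norms on $W^{k_0,2}(Q)$ and the uniform $L^2$-bound, $\|v\|_{W^{k_0,2}}^2\le C(\|\nabla^{k_0}v\|_{L^2}^2+\|v\|_{L^2}^2)\le C(1+Y)$; likewise $|R|\le C\|\nabla v\|_{L^2}^2\le\tfrac14 Y+C$. Setting $Z:=Y+2R+C_0$ with $C_0$ so large that $\tfrac12 Y\le Z\le 2Y+C_0$, the differential inequality becomes $\frac{d}{dt}Z\le\|\partial_t f\|_{L^2}^2+C(1+Z)$, and Gronwall gives $\sup_t Z(t)\le\big(Z(0)+\|\partial_t f\|_{L^2((0,T);L^2)}^2+CT\big)e^{CT}$. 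Finally $Z(0)$ is $\epsilon$-uniformly bounded: $\|\nabla^{k_0}v(0)\|_{L^2}=\|\nabla^{k_0}\eta_*\|_{L^2}\le C\|\eta_*\|_{W^{2k_0,2}}$, $|R(0)|\le C\|\nabla\eta_*\|_{L^2}^2$, and from the equation at $t=0$,
\[
\partial_{tt}\eta_\epsilon(0)=f(0)-(-\Delta)^{k_0}\eta_0-\epsilon(-\Delta)^{k_0}\eta_*+\div(\nabla_\xi e(\nabla\eta_0)),
\]
whose $L^2$-norm is $\le\|f(0)\|_{L^2}+C\|\eta_0\|_{W^{2k_0,2}}+\|\eta_*\|_{W^{2k_0,2}}$ for $\epsilon\le1$ (here $f(0)$ makes sense because $f\in W^{1,2}((0,T);L^2)\hookrightarrow C([0,T];L^2)$, and $\div(\nabla_\xi e(\nabla\eta_0))=\nabla^2_\xi e(\nabla\eta_0):\nabla^2\eta_0\in L^2$ by $E(\eta_0)<\infty$ and $\eta_0\in W^{2k_0,2}$). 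Hence $\sup_t Y(t)\le C(\eta_0,\eta_*,f)$, which with the uniform $L^2$-bound and norm equivalence gives the claimed $\epsilon$-independent bounds for $\partial_{tt}\eta_\epsilon$ in $L^\infty((0,T);L^2)$ and $\partial_t\eta_\epsilon$ in $L^\infty((0,T);W^{k_0,2})$.

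The main obstacle is the elastic term after differentiating in time: a naive test with $\partial_{tt}\eta_\epsilon$ produces $\langle\nabla^2_\xi e(\nabla\eta_\epsilon):\nabla\partial_t\eta_\epsilon,\nabla\partial_{tt}\eta_\epsilon\rangle$, for which there is no $\epsilon$-uniform control of $\nabla\partial_{tt}\eta_\epsilon$ (the dissipation only gives the degenerating bound $\epsilon\|\nabla^{k_0}\partial_{tt}\eta_\epsilon\|_{L^2}^2$). The time-derivative trick removes this at the cost of the cubic term $\int_Q|\nabla v|^3$, and the second key point is that this term can be absorbed into a Gronwall inequality precisely because $k_0\ge2$ and $W^{k_0-1,2}\hookrightarrow L^\infty$, together with the fact that all coefficient bounds stemming from $e$ are $\epsilon$-uniform thanks to the $\epsilon$-uniform energy bound of \Cref{thm:stability-solid} and \eqref{as:det-lower-bound}.
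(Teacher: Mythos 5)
Your proof is correct, and the overall skeleton (differentiate the equation in time, test with $\partial_{tt}\eta_\epsilon$, discard the nonnegative $\epsilon$-dissipation, Gronwall on $\|\partial_{tt}\eta_\epsilon\|_{L^2}^2+\|\nabla^{k_0}\partial_t\eta_\epsilon\|_{L^2}^2$, and recover the $\epsilon$-uniform initial value of $\partial_{tt}\eta_\epsilon(0)$ from the equation) is exactly the paper's. Where you genuinely diverge is the treatment of the differentiated elastic term. The paper keeps it in divergence form as an $L^2$ right-hand side: it expands $\div(\nabla^2_\xi e(\nabla\eta_\epsilon):\nabla\partial_t\eta_\epsilon)$ by the chain rule, bounds its $L^2$-norm by $C\|\nabla^{k_0}\partial_t\eta_\epsilon\|_{L^2}$ via the uniform $L^\infty$ bounds on derivatives of $e$, the embedding $W^{k_0,2}\hookrightarrow W^{1,\infty}$ and Poincar\'e, and then simply Cauchy--Schwarzes against $\|\partial_{tt}\eta_\epsilon\|_{L^2}$; this closes the Gronwall linearly with no modification of the energy functional. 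You instead integrate by parts onto $\nabla\partial_{tt}\eta_\epsilon$, pull out the total time derivative $\frac{d}{dt}R$ with $R=\frac12\langle\nabla^2_\xi e(\nabla\eta_\epsilon):\nabla v,\nabla v\rangle$, and pay for it with the commutator term $\int_Q|\nabla v|^3$ and the need to show $Z=Y+2R+C_0$ is equivalent to $Y$ (which you do correctly via interpolation and the uniform $L^2$ bound on $v$, using $k_0\ge 2$). So your remark that the ``naive'' test is blocked by lack of control on $\nabla\partial_{tt}\eta_\epsilon$ is a bit too pessimistic: one never needs to integrate by parts, because $\partial_t\eta_\epsilon\in W^{k_0,2}$ with $k_0\ge 2$ already makes the divergence an $L^2$ function. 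Your route buys nothing here and costs the cubic term and the indefinite correction $R$ (indefinite since $E_1$ is non-convex), but all these extra steps are carried out correctly, so the argument stands. A minor point: both your commutator $\partial_t[\nabla^2_\xi e(\nabla\eta_\epsilon)]$ and, implicitly, the paper's own chain-rule expansion use boundedness of the third $\xi$-derivative of $e$, which strictly speaking goes beyond \eqref{as:e-C2} and anticipates the strengthening (E.1$'$) made in the following subsection; this is harmless for the prototypical energy.
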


\begin{proof}
As shown during the proof of \Cref{lem:epsilon-dependent}, $\eta_\epsilon$ satisfies the equation
\begin{align*}
\partial_{ttt}\eta_\epsilon +(-\Delta)^{k_0} \partial_t \eta_\epsilon +\epsilon (-\Delta)^{k_0}\partial_{tt}\eta_\epsilon &= \partial_t f + \div(\nabla_\xi e(\nabla\eta_\epsilon)\nabla \partial_t\eta_\epsilon) \\
\eta_\epsilon(0)&=\eta_0\\
\partial_t\eta_\epsilon(0)&=\eta_*\\
\partial_{tt} \eta_\epsilon(0)&= f(0)-(-\Delta)^{k_0}\eta_0- \epsilon(-\Delta)^{k_0}\eta_*+\div(\nabla_\xi e(\nabla \eta_0))
\end{align*}
By the same lemma we have  $\partial_{tt}\eta_\epsilon \in L^\infty((0,T);L^2(Q))\cap L^2((0,T);W^{k_0,2}(Q))$ so we can use $\partial_{tt}\eta_\epsilon$ as a test function. This yields 
\begin{align*}
\frac{1}{2}\partial_t\norm{\partial_{tt}\eta_\epsilon}_{L^2}^2 + \frac{1}{2}\partial_t \norm{\nabla^{k_0}\partial_t\eta_\epsilon}_{L^2}^2+\epsilon\norm{\nabla^{k_0}\partial_{tt}\eta_\epsilon}_{L^2}^2 = \langle \partial_t f, \partial_{tt} \eta_\epsilon\rangle +\langle\div(\nabla_\xi e(\nabla\eta_\epsilon)\nabla \partial_t\eta_\epsilon), \partial_{tt}\eta_\epsilon\rangle \\ \leq \norm{\partial_t f + \div(\nabla_\xi e(\nabla\eta_\epsilon)\nabla \partial_t\eta_\epsilon)}_{L^2} \norm{\partial_{tt}\eta_\epsilon}_{L^2}.
\end{align*}
We now apply the chain rule (for which we have enough regularity by above) 
\begin{equation*}
\div(\nabla_\xi e(\nabla\eta_\epsilon)\nabla \partial_t\eta_\epsilon)= \nabla_{\xi}^2 e(\nabla\eta_\epsilon) \nabla^2\eta_\epsilon \nabla\partial_t\eta_\epsilon + \nabla_\xi e(\nabla\eta_\epsilon) \nabla^2 \partial_t\eta_\epsilon.
\end{equation*}
Now estimate by H\"older inequality and the Sobolev inequality for the continuous embedding $W^{k_0,2}(Q)\subset W^{1,\infty}(Q)$ and the Poincar\'e inequality (here $c_s$ resp. $c_p$ is the Sobolev resp. Poincar\'e constant)
\begin{multline*}
\norm{\div(\nabla_\xi e(\nabla\eta_\epsilon)\nabla \partial_t\eta_\epsilon)}_{L^2} \leq \norm{\nabla^2_\xi e(\nabla\eta_\epsilon)}_{L^\infty}\! \norm{\nabla^2\eta_\epsilon}_{L^2}\norm{\nabla\partial_t\eta_\epsilon}_{L^\infty}\! + \norm{\nabla_\xi e(\nabla\eta_\epsilon)}_{L^\infty }\!\norm{\nabla^2\partial_t\eta_\epsilon}_{L^2} \\ \leq (c_s \norm{\nabla_\xi^2 e(\nabla\eta_\epsilon)}_{L^\infty} \norm{\nabla^2\eta_\epsilon}_{L^2}+ c_p\norm{\nabla_\xi e(\eta_\epsilon)}_{L^\infty} ) \norm{\nabla^{k_0}\partial_t\eta_\epsilon}_{L^2}
\end{multline*}
Note that by energy estimates \eqref{eqn:eta-epsilon-estimates} and \Cref{lem:e-density-bounded} we have $$\norm{\nabla^2\eta_\epsilon}_{L^2}, \norm{\nabla^2 e(\nabla\eta_\epsilon)}_{L^\infty} \leq C=C(E(\eta_0),\norm{\eta_*}_{L^2},f).$$ Therefore we have 
\begin{equation*}
\frac12 \left(\partial_t\norm{\partial_{tt}\eta_\epsilon}_{L^2}^2+\partial_t\norm{\nabla^{k_0}\partial_t\eta_\epsilon}_{L^2}^2\right) +\epsilon\underbrace{\norm{\nabla^{k_0} \partial_{tt}\eta_\epsilon}_{L^2}^2}_{\geq 0} \leq C\left(1+\norm{\partial_{tt}\eta_\epsilon}_{L^2}^2+\norm{\nabla^{k_0}\partial_t\eta_\epsilon}_{L^2}^2\right)
\end{equation*}
where $C$ is independent of $\epsilon$. Applying Gronwall inequality to  $\norm{\partial_{tt}\eta_\epsilon}_{L^2}^2+\norm{\nabla^{k_0}\partial_t\eta_\epsilon}_{L^2}^2$ we obtain the desired estimate
\begin{equation*}
\norm{\partial_{tt}\eta_\epsilon}_{L^\infty((0,T);L^2(Q))}^2+\norm{\nabla^{k_0}\partial_t\eta_\epsilon}_{L^\infty((0,T);L^2(Q))}^2 \leq e^T\left (\norm{\eta_*}_{L^2}^2 +\norm{\nabla^{k_0} \eta_0}_{L^2}^2 +CT\right ),
\end{equation*}
where the right hand side depends on $\eta_0,\eta_*,f$.
\end{proof}

\begin{theorem}[Higher time regularity]
\label{thm:highertime}
Assume it holds
\begin{equation*}
\eta_0, \eta_*\in W^{3k_0,2}(Q), \quad f \in W^{2,2}((0,T); L^2(Q)).
\end{equation*}
and further assume that $\norm{\Delta^{2k_0}\eta_*}_{L^2(Q)}\leq \frac{1}{\epsilon}$,
then
\begin{equation*}
\partial_{tt} \eta_\epsilon \in L^\infty((0,T);W^{k_0,2}(Q)), \quad \partial_{ttt}\eta_\epsilon \in L^\infty((0,T);L^2(Q)),\quad \partial_{ttt}\eta_\epsilon \in L^2((0,T);W^{k_0,2}(Q)).
\end{equation*}
with the $\epsilon$-independent estimate 
\begin{equation*}
\norm{\partial_{tt} \eta_\epsilon}_{L^\infty((0,T);W^{k_0,2}(Q))}+\norm{\partial_{ttt}\eta_\epsilon }_{ L^\infty((0,T);L^2(Q))}+\sqrt{\epsilon}\norm{\partial_{ttt}\eta_\epsilon}_{L^2((0,T);W^{k_0,2}(Q))}\leq C(\eta_0,\eta_*,f).
\end{equation*}
\end{theorem}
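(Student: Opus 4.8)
The plan is to differentiate the equation one further time in $t$ and to run, one order higher, the two-step argument already used for \Cref{thm:timereg}: first produce a (possibly $\epsilon$-dependent) bound that makes $\partial_{ttt}\eta_\epsilon$ an admissible test function, then test and extract the $\epsilon$-independent estimate. Recall that the proof of \Cref{lem:epsilon-dependent} shows that $v:=\partial_t\eta_\epsilon$ solves
\begin{equation*}
\partial_{tt}v+(-\Delta)^{k_0}v+\epsilon(-\Delta)^{k_0}\partial_t v=\partial_t f+\div\bigl(\nabla_\xi e(\nabla\eta_\epsilon)\nabla v\bigr).
\end{equation*}
Differentiating once more in time, writing $w:=\partial_{tt}\eta_\epsilon=\partial_t v$ and using $\partial_t[\nabla_\xi e(\nabla\eta_\epsilon)]=\nabla_\xi^2 e(\nabla\eta_\epsilon):\nabla v$, one obtains the equation, linear in $w$,
\begin{equation*}
\partial_{tt}w+(-\Delta)^{k_0}w+\epsilon(-\Delta)^{k_0}\partial_t w=\partial_{tt}f+\div\bigl(\nabla_\xi^2 e(\nabla\eta_\epsilon)(\nabla v,\nabla v)\bigr)+\div\bigl(\nabla_\xi e(\nabla\eta_\epsilon)\nabla w\bigr),
\end{equation*}
with $w(0)=\partial_{tt}\eta_\epsilon(0)$ read off from \eqref{eqn:eta-epsilon-eqn} at $t=0$ and $\partial_t w(0)=\partial_{ttt}\eta_\epsilon(0)$ read off from the once-differentiated equation at $t=0$. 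Proceeding exactly as in \Cref{lem:epsilon-dependent} --- solve the corresponding linear $\beta$-problem with \Cref{lem:beta-regularity}, the right-hand side being $\partial_{tt}f+\div(\nabla_\xi^2 e(\nabla\eta_\epsilon)(\nabla v,\nabla v))+\partial_t\div(\nabla_\xi e(\nabla\eta_\epsilon)\nabla v)\in L^2((0,T);W^{-1,2}(Q))$, and identify the solution with $\partial_{tt}\eta_\epsilon$ --- one first obtains $\partial_{ttt}\eta_\epsilon\in L^\infty((0,T);L^2(Q))\cap L^2((0,T);W^{k_0,2}(Q))$ and $\partial_{tt}\eta_\epsilon\in L^\infty((0,T);W^{k_0,2}(Q))$ with ($\epsilon$-dependent) bounds; in particular $\partial_{ttt}\eta_\epsilon$ is now an admissible test function in the $w$-equation.

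Testing the $w$-equation with $\partial_t w=\partial_{ttt}\eta_\epsilon$ gives
\begin{equation*}
\tfrac12\partial_t\norm{\partial_{ttt}\eta_\epsilon}_{L^2}^2+\tfrac12\partial_t\norm{\nabla^{k_0}\partial_{tt}\eta_\epsilon}_{L^2}^2+\epsilon\norm{\nabla^{k_0}\partial_{ttt}\eta_\epsilon}_{L^2}^2=\bigl\langle\partial_{tt}f+\div(\nabla_\xi^2 e(\nabla\eta_\epsilon)(\nabla v,\nabla v))+\div(\nabla_\xi e(\nabla\eta_\epsilon)\nabla w),\,\partial_{ttt}\eta_\epsilon\bigr\rangle,
\end{equation*}
and it remains to bound the right-hand side in $L^2$. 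Expanding the last divergence by the chain rule as $\nabla_\xi^2 e(\nabla\eta_\epsilon)\nabla^2\eta_\epsilon\,\nabla w+\nabla_\xi e(\nabla\eta_\epsilon)\nabla^2 w$ and using $W^{k_0,2}\subset W^{1,\infty}$ and the Poincar\'e inequality (valid since $\partial_{tt}\eta_\epsilon=0$ on $\Gamma_D$), its $L^2$-norm is $\le C\norm{\nabla^{k_0}w}_{L^2}$, where $C$ involves only the $\epsilon$-independent quantities $\norm{\nabla^2\eta_\epsilon}_{L^2},\norm{\nabla_\xi e(\nabla\eta_\epsilon)}_{L^\infty},\norm{\nabla_\xi^2 e(\nabla\eta_\epsilon)}_{L^\infty}\le C$ from \Cref{lem:e-density-bounded} and \eqref{eqn:eta-epsilon-estimates}. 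The genuinely new term $\div(\nabla_\xi^2 e(\nabla\eta_\epsilon)(\nabla v,\nabla v))$ expands into a $\nabla_\xi^3 e(\nabla\eta_\epsilon)\,\nabla^2\eta_\epsilon\,(\nabla v,\nabla v)$ piece and a $\nabla_\xi^2 e(\nabla\eta_\epsilon)\,\nabla v\,\nabla^2 v$ piece, and both are bounded in $L^2$ \emph{uniformly in $\epsilon$}: indeed $\norm{\nabla v}_{L^\infty}=\norm{\nabla\partial_t\eta_\epsilon}_{L^\infty}$ and $\norm{\nabla^2 v}_{L^2}=\norm{\nabla^2\partial_t\eta_\epsilon}_{L^2}$ are controlled $\epsilon$-independently through $\norm{\partial_t\eta_\epsilon}_{L^\infty((0,T);W^{k_0,2})}\le C$ of \Cref{thm:timereg} together with $W^{k_0,2}\subset W^{1,\infty}$, $\norm{\nabla^2\eta_\epsilon}_{L^2}$ by \eqref{eqn:eta-epsilon-estimates}, and $\norm{\nabla_\xi^3 e(\nabla\eta_\epsilon)}_{L^\infty}$ by the same compact-set argument as in \Cref{lem:e-density-bounded} (this is where one uses that the prototypical density $e$ is $C^3$, in fact $C^\infty$, away from $\{\det=0\}$). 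Consequently the right-hand side is bounded in $L^2$ by $C\bigl(1+\norm{\partial_{tt}f}_{L^2}+\norm{\nabla^{k_0}\partial_{tt}\eta_\epsilon}_{L^2}\bigr)$ with $C$ independent of $\epsilon$, so that, writing $y(t):=\norm{\partial_{ttt}\eta_\epsilon}_{L^2}^2+\norm{\nabla^{k_0}\partial_{tt}\eta_\epsilon}_{L^2}^2$, one arrives at $y'\le C(1+\norm{\partial_{tt}f}_{L^2}^2+y)$; Gronwall's inequality, using $f\in W^{2,2}((0,T);L^2(Q))$, then yields the asserted $\epsilon$-independent bound (and the $\epsilon\norm{\nabla^{k_0}\partial_{ttt}\eta_\epsilon}_{L^2}^2$ term in the same identity, integrated in time, gives the stated $\sqrt\epsilon$-weighted $L^2((0,T);W^{k_0,2})$ bound on $\partial_{ttt}\eta_\epsilon$), \emph{provided} $y(0)$ is bounded independently of $\epsilon$.

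Bounding $y(0)$ $\epsilon$-independently is the main obstacle, and it is precisely here that the hypothesis $\norm{\Delta^{2k_0}\eta_*}_{L^2}\le 1/\epsilon$ is used. From \eqref{eqn:eta-epsilon-eqn} at $t=0$ one has $\partial_{tt}\eta_\epsilon(0)=f(0)-(-\Delta)^{k_0}\eta_0-\epsilon(-\Delta)^{k_0}\eta_*+\div(\nabla_\xi e(\nabla\eta_0))$, which lies in $W^{k_0,2}(Q)$ with $\epsilon$-independent norm since $\eta_0,\eta_*\in W^{3k_0,2}(Q)$, $e$ is smooth near $\nabla\eta_0(\overline Q)\subset\R^{n\times n}_{\det>0}$, and $f$ is regular enough near $t=0$. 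Differentiating once more, $\partial_{ttt}\eta_\epsilon(0)=\partial_t f(0)-(-\Delta)^{k_0}\eta_*-\epsilon(-\Delta)^{k_0}\partial_{tt}\eta_\epsilon(0)+\bigl[\partial_t\div(\nabla_\xi e(\nabla\eta_\epsilon))\bigr]\big|_{t=0}$, and after inserting the formula for $\partial_{tt}\eta_\epsilon(0)$ the only term that threatens $\epsilon$-uniformity is the contribution $-\epsilon^2(-\Delta)^{2k_0}\eta_*$ arising from $\epsilon(-\Delta)^{k_0}\bigl(-\epsilon(-\Delta)^{k_0}\eta_*\bigr)$; its $L^2$-norm equals $\epsilon^2\norm{\Delta^{2k_0}\eta_*}_{L^2}\le\epsilon\le 1$ by the assumption, while the remaining terms are controlled by $\eta_0,\eta_*\in W^{3k_0,2}(Q)$ and $\partial_t f(0)\in L^2(Q)$. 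I expect this initial-data bookkeeping, together with the careful accounting of which constants in the chain-rule expansions stay $\epsilon$-dependent, to be the only delicate points; everything else is a faithful repetition of the argument of \Cref{thm:timereg}, one time derivative higher.
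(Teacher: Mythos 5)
Your proposal is correct and takes essentially the same route as the paper: iterate the argument of \Cref{lem:epsilon-dependent} one time-derivative higher to obtain an $\epsilon$-dependent bound via \Cref{lem:beta-regularity}, then test the time-differentiated equation with $\partial_{ttt}\eta_\epsilon$ as in \Cref{thm:timereg} to extract the $\epsilon$-independent estimate. The paper's own proof is a one-line ``iterate the previous proof'' with the data $g,\beta_0,\beta_*$ simply listed; you fill in the chain-rule bookkeeping, the $\epsilon$-uniform control of the nonlinear right-hand side via \Cref{lem:e-density-bounded}, \eqref{eqn:eta-epsilon-estimates} and \Cref{thm:timereg}, and correctly pinpoint that the extra hypothesis $\norm{\Delta^{2k_0}\eta_*}_{L^2}\le 1/\epsilon$ is there to tame the $\epsilon^2(-\Delta)^{2k_0}\eta_*$ contribution to $\partial_{ttt}\eta_\epsilon(0)$. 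One small caveat: in the expansion of $-\epsilon(-\Delta)^{k_0}\beta_0$ the companion pieces $\epsilon(-\Delta)^{2k_0}\eta_0$ and $-\epsilon(-\Delta)^{k_0}f(0)$ also appear, which formally exceed the stated regularity $\eta_0\in W^{3k_0,2}(Q)$ and $f\in W^{2,2}((0,T);L^2(Q))$; the paper's sketch glosses over this in exactly the same way, so this is not a gap you introduced, but it is not quite accurate to say the $\eta_*$ piece is the \emph{only} term threatening $\epsilon$-uniformity.
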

\begin{proof}
This follows by an iteration of the previous proof. Use
\begin{align*}
g&=\partial_{tt} f + \partial_{tt} \div(\nabla_\xi e(\nabla\eta_\epsilon)),\\
\beta_0&=f(0)-(-\Delta)^{k_0}\eta_0- \epsilon(-\Delta)^{k_0}\eta_*+\div(\nabla_\xi e(\nabla \eta_0)),\\
\beta_* &= \partial_tf(0)-(-\Delta)^{k_0}\eta_* - \epsilon(-\Delta)^{k_0}\beta_0 +\div(\nabla_\xi^2 e(\nabla\eta_0)\nabla \eta_*),
\end{align*}
  we can readily verify the assumptions of \Cref{lem:beta-regularity}, that is $g\in L^2((0,T);W^{-1,2}(Q))$, $\beta_0\in W^{k_0,2}(Q)$, $\beta_*\in L^2(Q)$. Proceed as before to see that $\beta=\partial_{tt}\eta_\epsilon$ has the given regularity with given estimates.
\end{proof}

\subsubsection*{No dissipation limit}
We will now see that when the dissipation vanishes, we obtain as the limit a~solution to the problem without dissipation.
\begin{theorem}\label{thm:higher-space-regularity} 
For $\eta_0\in W^{k_0,2}_D(Q)$, $\eta_*\in L^2(Q)$, $f\in L^2((0,T);L^2(Q))$. Then
there exists a weak solution to
\begin{align}
\begin{aligned}\label{eqn:eta-regularity-eqn}
\partial_{tt}\eta + (-\Delta)^{k_0}\eta -\div (\nabla_\xi e(\nabla\eta))&= f \\
\eta(0)&=\eta_0\\
\partial_t\eta (0) &=\eta_*.
\end{aligned}
\end{align}
If further 
 \begin{equation*}
\eta_0\in W^{2k_0,2}(Q),\quad \eta_*\in W^{ 2 k_0,2}(Q), \quad \partial_t f\in L^2((0,T);L^2(Q)),
\end{equation*}
then 
\begin{equation*}
\partial_{tt}\eta \in L^\infty((0,T);L^2(Q)), \quad \partial_t \eta\in L^\infty((0,T);W^{k_0,2}(Q)), \quad \eta\in L^\infty((0,T);W^{2k_0,2}_{\text{loc}}(Q))
\end{equation*}
with
\begin{equation*}
\norm{\partial_{tt}\eta}_{L^\infty((0,T);L^2(Q))}+ \norm{\partial_t \eta}_{L^\infty((0,T);W^{k_0,2}(Q))}+\norm{\Delta^{k_0} \eta}_{L^\infty((0,T);L^2(Q))}\leq  C(\eta_0,\eta_*,f).
\end{equation*}
If further,
\begin{equation*}
\eta_0, \eta_*\in W^{3k_0,2}(Q), \quad f \in W^{2,2}((0,T); L^2(Q)),
\end{equation*}
then
\begin{equation*}
\partial_{tt} \eta \in L^\infty((0,T);W^{k_0,2}(Q)), \quad \partial_{ttt}\eta \in L^\infty((0,T);L^2(Q)),\quad  \partial_{t}\eta\in L^\infty((0,T);W^{2k_0,2}_{\text{loc}}(Q))
\end{equation*}
with
\begin{equation}
\label{eqn:time-reg-estimates}
\norm{\partial_{ttt}\eta}_{L^\infty((0,T);L^2(Q))}+ \norm{\partial_{tt} \eta}_{L^\infty((0,T);W^{k_0,2}(Q))}+\norm{\Delta^{k_0} \partial_{t}\eta}_{L^\infty((0,T);L^2(Q))}\leq  C(\eta_0,\eta_*,f).
\end{equation}
\end{theorem}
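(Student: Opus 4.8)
\emph{Proof strategy.} The plan is to obtain $\eta$ as the no‑dissipation limit $\epsilon\to0$ of the regularised solutions $\eta_\epsilon$ of \eqref{eqn:eta-epsilon-eqn}, and to transport the $\epsilon$‑uniform estimates of \Cref{thm:timereg} and \Cref{thm:highertime} to $\eta$. First, from the basic energy bounds \eqref{eqn:eta-epsilon-estimates} the family $\eta_\epsilon$ is bounded in $L^\infty((0,T);W^{k_0,2}(Q))$, $\partial_t\eta_\epsilon$ is bounded in $L^\infty((0,T);L^2(Q))$, and $\sqrt\epsilon\,\partial_t\eta_\epsilon$ is bounded in $L^2((0,T);W^{k_0,2}(Q))$. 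Thus $t\mapsto\eta_\epsilon(t)$ is equi‑Lipschitz into $L^2(Q)$ and equibounded in $W^{k_0,2}(Q)$; interpolating between $L^2$ and $W^{k_0,2}$ and using the compact embedding $X\subset\subset C^{1,\alpha'}(Q)$, Arzel\`a--Ascoli gives a subsequence with $\eta_\epsilon\to\eta$ in $C([0,T];C^{1,\alpha'}(Q))$, $\eta_\epsilon\weakto^*\eta$ in $L^\infty((0,T);W^{k_0,2}(Q))$ and $\partial_t\eta_\epsilon\weakto^*\partial_t\eta$ in $L^\infty((0,T);L^2(Q))$. Since $E(\eta_\epsilon(t))$ is bounded uniformly, \eqref{as:det-lower-bound} yields $\det\nabla\eta_\epsilon\geq\epsilon_0>0$ on $(0,T)\times Q$ with $\epsilon_0$ independent of $\epsilon$, so together with the uniform $C^1$‑bound the matrices $\nabla\eta_\epsilon$ stay in a fixed compact subset of $\R^{n\times n}_{\det>0}$; by \eqref{as:e-C2} and the uniform convergence $\nabla\eta_\epsilon\to\nabla\eta$ we then get $\nabla_\xi e(\nabla\eta_\epsilon)\to\nabla_\xi e(\nabla\eta)$ and $\nabla_\xi^2 e(\nabla\eta_\epsilon)\to\nabla_\xi^2 e(\nabla\eta)$ uniformly on $(0,T)\times Q$. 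Passing to the limit in the weak formulation tested against $\phi\in C^\infty([0,T];C^\infty(Q;\R^n))$ with $\phi|_{\Gamma_D}=0$, $\phi(T)=0$: the inertial and the linear $E_2$‑terms converge by weak‑$*$ convergence, the term $\langle\nabla_\xi e(\nabla\eta_\epsilon),\nabla\phi\rangle$ by the uniform convergence just noted, and the dissipative term is bounded by $\epsilon\,|\langle\nabla^{k_0}\partial_t\eta_\epsilon,\nabla^{k_0}\phi\rangle|\leq\sqrt\epsilon\,\big(\sqrt\epsilon\,\norm{\partial_t\eta_\epsilon}_{L^2(W^{k_0,2})}\big)\,\norm{\phi}_{L^2(W^{k_0,2})}\to0$. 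Hence $\eta$ is a weak solution in the sense of \Cref{def:weaksol-solid}; weak closedness of $\mathcal E$ gives $\eta(t)\in\mathcal E$, $\eta(0)=\eta_0$ follows from the $C^0_t$ convergence, and $\partial_t\eta(0)=\eta_*$ is encoded in the limiting identity.

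\emph{First level of regularity.} Under the hypotheses $\eta_0,\eta_*\in W^{2k_0,2}(Q)$ and $\partial_t f\in L^2((0,T);L^2(Q))$, \Cref{thm:timereg} provides $\epsilon$‑independent bounds for $\partial_{tt}\eta_\epsilon$ in $L^\infty((0,T);L^2(Q))$ and for $\partial_t\eta_\epsilon$ in $L^\infty((0,T);W^{k_0,2}(Q))$. Extracting a further subsequence and using weak‑$*$ lower semicontinuity, these bounds pass to $\eta$, so $\partial_{tt}\eta\in L^\infty((0,T);L^2(Q))$ and $\partial_t\eta\in L^\infty((0,T);W^{k_0,2}(Q))$ with the stated estimate. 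For the spatial regularity I read the limit equation, for a.e. fixed $t$, as the elliptic identity $(-\Delta)^{k_0}\eta=f+\div(\nabla_\xi e(\nabla\eta))-\partial_{tt}\eta$; here $f\in C([0,T];L^2(Q))$ because $f,\partial_t f\in L^2((0,T);L^2(Q))$, $\div(\nabla_\xi e(\nabla\eta))=\nabla_\xi^2 e(\nabla\eta):\nabla^2\eta\in L^\infty((0,T);L^2(Q))$ since $\nabla_\xi^2 e(\nabla\eta)$ is bounded and $\nabla^2\eta\in L^\infty((0,T);L^2(Q))$, and $\partial_{tt}\eta\in L^\infty((0,T);L^2(Q))$. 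Interior regularity for the polyharmonic operator on subdomains $Q'$ with $\overline{Q'}\subset Q$ then yields $\eta\in L^\infty((0,T);W^{2k_0,2}_{\mathrm{loc}}(Q))$ and the bound on $\norm{\Delta^{k_0}\eta}_{L^\infty((0,T);L^2(Q))}$ (working in the interior avoids discussing the higher‑order boundary conditions, hence the subscript $\mathrm{loc}$).

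\emph{Second level of regularity.} Under $\eta_0,\eta_*\in W^{3k_0,2}(Q)$ and $f\in W^{2,2}((0,T);L^2(Q))$, the same limit procedure applied to the $\epsilon$‑independent bounds of \Cref{thm:highertime} gives $\partial_{tt}\eta\in L^\infty((0,T);W^{k_0,2}(Q))$, $\partial_{ttt}\eta\in L^\infty((0,T);L^2(Q))$. Differentiating the limit equation in time, $(-\Delta)^{k_0}\partial_t\eta=\partial_t f+\div(\nabla_\xi^2 e(\nabla\eta)\nabla\partial_t\eta)-\partial_{ttt}\eta$, whose right‑hand side is in $L^\infty((0,T);L^2(Q))$ — $\partial_t f\in C([0,T];L^2(Q))$ as $f\in W^{2,2}((0,T);L^2(Q))$, and the divergence term is exactly the one already estimated in the proof of \Cref{thm:highertime}, while $\partial_{ttt}\eta\in L^\infty((0,T);L^2(Q))$. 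Interior polyharmonic regularity then yields $\partial_t\eta\in L^\infty((0,T);W^{2k_0,2}_{\mathrm{loc}}(Q))$ and the estimate \eqref{eqn:time-reg-estimates}.

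\emph{Main obstacle.} The delicate step is the limit passage in Step 1. One must handle simultaneously the singular nonlinearity $\nabla_\xi e(\nabla\eta_\epsilon)$ — which is only meaningful because the uniform energy bound forces $\det\nabla\eta_\epsilon$ away from zero and keeps $\nabla\eta_\epsilon$ in a compact set where $e\in C^2$ — and, in the regular cases, the second‑order quantity $\nabla_\xi^2 e(\nabla\eta_\epsilon):\nabla^2\eta_\epsilon$, where the uniform convergence of the coefficient has to be combined with the merely weak $L^2$‑convergence of $\nabla^2\eta_\epsilon$. One should also verify that the Ciarlet--Ne\v cas constraint (and the non‑collision) is preserved in the limit, which again relies on the uniform lower bound on $\det\nabla\eta_\epsilon$ together with the $C^1$‑convergence.
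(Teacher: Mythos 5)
Your proposal is correct and follows essentially the same route as the paper: pass to the no-dissipation limit $\epsilon\to0$ using the uniform bounds, Aubin--Lions-type compactness into $C([0,T];C^{1,\alpha}(Q))$ to handle the nonlinearity, transport the $\epsilon$-independent estimates of \Cref{thm:timereg} and \Cref{thm:highertime} to the limit, and then read the equation (and its time derivative) as an elliptic identity to get the $\Delta^{k_0}\eta$ bounds and interior $W^{2k_0,2}_{\mathrm{loc}}$ regularity. The only cosmetic difference is that the global bound on $\Delta^{k_0}\eta$ in $L^\infty((0,T);L^2(Q))$ comes directly from the identity $(-\Delta)^{k_0}\eta=f-\partial_{tt}\eta+\div(\nabla_\xi e(\nabla\eta))$ rather than from interior elliptic regularity, which is needed only for the $W^{2k_0,2}_{\mathrm{loc}}$ statement.
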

\begin{proof}
Let for $\epsilon>0$, be $\eta_\epsilon$ a~solution of
\begin{align*}
\partial_{tt}\eta_\epsilon + (-\Delta)^{k_0}\eta_\epsilon +\epsilon (-\Delta)^{k_0}\partial_t\eta_\epsilon -\div (\nabla_\xi e(\nabla\eta_\epsilon))&= f\\
\eta_\epsilon(0)&=\eta_0\\
\partial_t\eta_\epsilon (0) &=\eta_*.
\end{align*}
Then for (some subsequence of) $\epsilon \to 0$ we have 
\begin{equation*}
\eta_\epsilon \weakto^*  \eta \text{ in }
L^\infty((0,T);W^{k_0,2}(Q)),\quad \partial_t\eta_\epsilon \weakto^* \partial_t\eta\text{ in }L^\infty((0,T);L^2(Q)),
\end{equation*}
From the estimates \eqref{eqn:eta-epsilon-estimates} we see that we can take a converging subsequence 
\begin{equation*}
\eta_\epsilon \weakto^*  \eta \text{ in }
L^\infty((0,T);W^{k_0,2}(Q)),\quad \partial_t\eta_\epsilon \weakto^* \partial_t\eta\text{ in }L^\infty((0,T);L^2(Q)),
\end{equation*}
and moreover, by the Aubin-Lions lemma we see that (for a further subsequence)
\begin{equation}\label{eqn:eta-eps-strongly-C1a}
\eta_\epsilon\to \eta \text{ in }C([0,T];C^{1,\alpha}(Q)).
\end{equation}
It remains to check that $\eta$ solves the equation. Let us test the equation for $\eta_\epsilon$ with a test function $\varphi\in C^\infty_c([0,T)\times Q)$. Then we see
\begin{equation*}
 \langle \eta_*, \varphi(0)\rangle + \int_0^T\hspace{-1em} -\langle \partial_t \eta_\epsilon, \partial_t\varphi \rangle + \langle\nabla^{k_0} \eta_\epsilon ,\nabla^{k_0} \varphi \rangle + \varepsilon\langle \nabla^{k_0} \partial_t \eta_\epsilon ,  \nabla^{k_0} \varphi \rangle + \langle\nabla_\xi e(\nabla\eta_\epsilon),\nabla \varphi\rangle \dx[t] = \int_0^T \langle f,\varphi\rangle \dx[t].
\end{equation*}
Limit passage in the first two terms on the left is from the weak convergence, and in the last term on the left from the strong convergence \eqref{eqn:eta-eps-strongly-C1a}. Finally, regarding the dissipation term we estimate 
\begin{equation*}
\left| \int_0^T \epsilon\langle \nabla^{k_0} \partial_t \eta_\epsilon ,  \nabla^{k_0} \varphi \rangle \dx[t]\right| \leq  \sqrt \epsilon\sqrt\epsilon\norm{\nabla^{k_0} \partial_t\eta_\epsilon}_{L^2((0,T);L^2(Q))} \norm{\nabla^{k_0}\varphi}_{L^2((0,T);L^2(Q))}\to 0,
\end{equation*}
since by the \eqref{eqn:eta-epsilon-estimates}, $\sqrt\epsilon\norm{\nabla^{k_0} \partial_t\eta_\epsilon}_{L^2((0,T);L^2(Q))}$ is bounded independent of $\epsilon$.
So passing to the limit $\epsilon\to 0$ we have 
\begin{equation*}
 \langle \eta_*, \varphi(0)\rangle + \int_0^T -\langle \partial_t \eta, \partial_t\varphi \rangle + \langle\nabla^{k_0} \eta,\nabla^{k_0} \varphi \rangle + \langle\nabla_\xi e(\nabla\eta),\nabla \varphi\rangle \dx[t] = \int_0^T \langle f,\varphi\rangle \dx[t].
\end{equation*}
Further, the initial condition $\eta(0)=\eta_0$ is satisfied due to $\eta_\epsilon(0)=\eta_0$ and the strong convergence \eqref{eqn:eta-eps-strongly-C1a}.

The regularity estimates except for the $\Delta^{k_0}\eta$ and $\Delta^{k_0}\partial_t\eta$ follow by the uniform in $epsilon$ estimates in \Cref{thm:timereg} and \Cref{thm:highertime}. Further they imply in the first case that $f(t) -\partial_{tt}\eta(t) +\div(\nabla_\xi e(\nabla\eta(t)))\in L^\infty((0,T);L^2(Q))$,
which implies that 
\[
(-\Delta)^{k_0} \eta(t) =f(t) -\partial_{tt}\eta(t) +\div(\nabla_\xi e(\nabla\eta(t)))
\]
almost everywhere and the estimate of $\Delta^{k_0}\eta$

Similar we find the second case that $\partial_t f(t) -\partial_{ttt}\eta(t) +\div(\nabla_\xi^2 e(\nabla\eta(t))\nabla\partial_t\eta(t))$ is bounded in $L^\infty((0,T);L^2(Q))$. Hence
\begin{equation*}
(-\Delta)^{k_0} \partial_t\eta(t) = \partial_t f(t) -\partial_{ttt}\eta(t) +\div(\nabla_\xi^2 e(\nabla\eta(t))\nabla\partial_t\eta(t)),
\end{equation*}
is satisfied almost everywhere and the estimate of $\Delta^{k_0}\partial_t\eta$ follows.

The regularity in $W^{2k_0,2}_{\text{local}}(Q)$ follows by the local regularity theory for the $(k_0)$-Laplace equation (which follows by applying iteratively the local theory for the Poisson equation). 
\end{proof}

\begin{remark}[Space regularity]
Please observe that in many situations the fact that $\Delta^{k_0}\eta\in L^2(Q)$ implies global higher spacial regularity up to $\eta\in W^{2k_0,2}(Q)$. This regularity however sensitively depends on the regularity and shape of the domain. For the sake of the generality of domains and boundary values we decided to not make this specific here. Certainly, local estimates are always available by the classical theory for the Poisson equation. 
\end{remark}

\begin{remark}[Improvement of regularity with dissipation]
In case that we equation includes a dissipation term $(-\Delta)^{k_0}\partial_t\eta$, we can observe a time regularizing effect known from parabolic equations. Therefore in that case, it is possible to start with initial data of no higher regularity, namely only \eqref{eqn:convrate-ic-rhs}. By a suitable testing of the equation we get that the regularity improves in an arbitrarily short time interval, and therefore it is possible to take this new time as initial and perform the procedure above.
\end{remark}

\subsection{Convergence rate -- elastic solid} Now we turn to the main question of this section, the rate of convergence of our scheme. 
In this section we again, to ease the notation, adopt the convention that $\|\cdot\|$ without any index is the $L^2(Q)$-norm, resp. $\langle\cdot,\cdot\rangle$ is the $L^2(Q)$-scalar product (or dual pairing in the $DE_1$ terms). Moreover we strengthen the assumption \eqref{as:e-C2} so that $e$ has one more derivative:
\begin{enumerate}
\item[(E.1')] $e\in {C}^3(\R^{n\times n}_{\det > 0})$, where $\R^{n\times n}_{\det > 0}= \{M\in\R^{n\times n }: \det M > 0\}$\label{as:e-C3}.
\end{enumerate}
The solution $\eta$ solves the equation
\begin{equation}\label{eqn:eta-convrate-eqn}
\partial_{tt}\eta + DE(\eta)=f
\end{equation}
To compare $\eta$ with our minimizing movements approximation $\{\eta_k^\ell\}_{k,\ell}$, we will make a discrete version of $\eta$.
For this let us integrate \eqref{eqn:eta-convrate-eqn} over time $s\in (t-h,t)$ and then $t\in(t_{k-1}^\ell,t_k^\ell)$ and divide by $\tau$ and $h$, to get (recall the definition \eqref{eqn:fkl-def} of $f_k^\ell$)
\begin{equation}\label{eqn:eta-discretized-eqn}
\frac{\frac{\eta(t_k^\ell)-\eta(t_{k-1}^\ell)}{\tau}-\frac{\eta(t_k^{\ell-1})-\eta(t_{k-1}^{\ell-1})}{\tau}}{h} + \fint_0^\tau \fint_0^h DE(\eta(t_{k-1}^\ell+s+\sigma)) \dx[s]\dx[\sigma]=f_k^\ell
\end{equation}
Please recall that we have introduced two schemes for construction of $\eta_k^\ell$, one with stabilization term \eqref{def:eta-minimizing-movements} and one without it \eqref{def:eta-minimizing-movements-nodiss}. All calculations are made here for $\eta_k^\ell$ with the dissipation term, that is from the approximation \eqref{def:eta-minimizing-movements}. We remark here that the version without stabilization \eqref{def:eta-minimizing-movements-nodiss}, the only difference is that the term $-c\tau \Delta \frac{\eta_k^\ell-\eta_{k-1}^\ell}{\tau}$ will not appear. Thus all the calculations below go through (slightly more simply) also in this case, and we get the same result.

Let us thus recall the Euler-Lagrange equation for $\eta_k^\ell\in \operatorname{int}\mathcal E$ (recall that we exclude collisions): 
\begin{equation}\label{eqn:eta-discrete-EL-eqn}
\frac{\frac{\eta_k^\ell-\eta_{k-1}^\ell}{\tau}-\frac{\eta_k^{\ell-1}-\eta_{k-1}^{\ell-1}}{
\tau}}{h} + DE(\eta_k^\ell) -c\tau \Delta \frac{\eta_k^\ell-\eta_{k-1}^\ell}{\tau} = f_k^\ell.
\end{equation}

Subtract \eqref{eqn:eta-discrete-EL-eqn} and \eqref{eqn:eta-discretized-eqn}, denote the error term by $e_k^\ell = \eta_k^\ell - \eta(t_k^\ell)$ and get
\begin{equation*}
\frac{\frac{e_k^\ell-e_{k-1}^\ell}{\tau}-\frac{e_k^{\ell-1}-e_{k-1}^{\ell-1}}{\tau}}{h}-c\tau \Delta \frac{\eta_k^\ell-\eta_{k-1}^\ell}{\tau}+\fint_0^\tau\fint_0^h DE(\eta_k^\ell)-DE(\eta(t_{k-1}^\ell+s+\sigma)) \dx[s]\dx[\sigma]=0
\end{equation*}
We add and subtract $DE(\eta(t_k^\ell))-\Delta \frac{\eta(t_k^\ell)-\eta(t_{k-1}^\ell)}{\tau}$, so that
\begin{multline*}
\frac{\frac{e_k^\ell-e_{k-1}^\ell}{\tau}-\frac{e_k^{\ell-1}-e_{k-1}^{\ell-1}}{\tau}}{h}+ DE(\eta_k^\ell)-DE(\eta(t_{k}^\ell))-c\tau \Delta \frac{e_k^\ell-e_{k-1}^\ell}{\tau}\\ =\fint_0^\tau\fint_0^h DE(\eta(t_{k-1}^\ell+s+\sigma)) -DE(\eta(t_k^\ell))\dx[s]\dx[\sigma] + \Delta \frac{\eta(t_k^\ell)-\eta(t_{k-1}^\ell)}{\tau}.
\end{multline*}
Use as a test function $\frac{e_k^\ell-e_{k-1}^\ell}{\tau}$ to obtain
\begin{multline*}
\frac{1}{2h}\norm{\frac{e_k^\ell-e_{k-1}^\ell}{\tau}}^2 + \frac{1}{2h}\norm{\frac{e_k^\ell-e_{k-1}^\ell}{\tau}-\frac{e_k^{\ell-1}-e_{k-1}^{\ell-1}}{\tau}}^2 +\left\langle \nabla^{k_0} e_k^\ell,\nabla^{k_0}\frac{e_k^\ell-e_{k-1}^\ell}{\tau}\right\rangle  +c\tau \left\| \nabla\frac{e_k^\ell-e_{k-1}^\ell}{\tau} \right\|^2 \\ = \frac{1}{2h}\norm{\frac{e_k^{\ell-1}-e_{k-1}^{\ell-1}}{\tau}}^2 -\left\langle DE_1(\eta_k^\ell)-DE_1(\eta(t_k^\ell)), \frac{e_k^\ell-e_{k-1}^\ell}{\tau}\right\rangle
\\  +\left\langle \fint_0^\tau\fint_0^h DE(\eta(t_{k-1}^\ell+s+\sigma)) -DE(\eta(t_k^\ell))\dx[s]\dx[\sigma],\frac{e_k^\ell-e_{k-1}^\ell}{\tau} \right\rangle -c\tau \left\langle\nabla \frac{\eta(t_k^\ell)-\eta(t_{k-1}^\ell)}{\tau}, \nabla\frac{e_k^\ell-e_{k-1}^\ell}{\tau} \right\rangle
\end{multline*}
so that
\begin{multline}\label{eqn:convrate-aftertesting}
\frac{1}{2h}\norm{\frac{e_k^\ell-e_{k-1}^\ell}{\tau}}^2 + \frac{1}{2h}\norm{\frac{e_k^\ell-e_{k-1}^\ell}{\tau}-\frac{e_k^{\ell-1}-e_{k-1}^{\ell-1}}{\tau}}^2 + \frac{1}{2\tau}\norm{\nabla^{k_0} e_k^\ell}^2 + \frac{1}{2\tau} \norm{\nabla^{k_0} e_k^\ell-\nabla^{k_0} e_{k-1}^\ell}^2
\\ +c\tau \left\| \nabla\frac{e_k^\ell-e_{k-1}^\ell}{\tau} \right\|^2 = \frac{1}{2h}\norm{\frac{e_k^{\ell-1}-e_{k-1}^{\ell-1}}{\tau}}^2 + \frac{1}{2\tau}\norm{\nabla^{k_0} e_{k-1}^\ell}^2 -\left\langle DE_1(\eta_k^\ell)-DE_1(\eta(t_k^\ell)), \frac{e_k^\ell-e_{k-1}^\ell}{\tau}\right\rangle
 \\ +\left\langle \fint_0^\tau\fint_0^h DE(\eta(t_{k-1}^\ell+s+\sigma)) -DE(\eta(t_k^\ell))\dx[s]\dx[\sigma],\frac{e_k^\ell-e_{k-1}^\ell}{\tau} \right\rangle-c\tau \left\langle\nabla \frac{\eta(t_k^\ell)-\eta(t_{k-1}^\ell)}{\tau}, \nabla\frac{e_k^\ell-e_{k-1}^\ell}{\tau} \right\rangle.
\end{multline}
Therefore we need to estimate the two terms containing the difference of energies.

\begin{lemma}\label{lem:ineq-DE1}
There exist $C_1,C_2$ depending on the energy bound of \Cref{thm:stability-solid} such that
\begin{equation*}
\langle DE_1(\eta_k^\ell)-DE_1(\eta(t_k^\ell)), e_k^\ell-e_{k-1}^\ell \rangle\leq \tau C_1\norm{\nabla^{k_0} e_k^\ell}^2+ \tau C_2\norm{ \frac{e_k^\ell-e_{k-1}^\ell}{\tau}}^2
\end{equation*}
\end{lemma}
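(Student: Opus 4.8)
The plan is to reduce the claim to the dual-norm bound
\[
\norm{DE_1(\eta_k^\ell)-DE_1(\eta(t_k^\ell))}_{L^2(Q)}\leq C\,\norm{e_k^\ell}_{W^{k_0,2}(Q)},
\]
and then to exploit that $e_k^\ell-e_{k-1}^\ell=\tau\cdot\frac{e_k^\ell-e_{k-1}^\ell}{\tau}$ carries an explicit factor $\tau$, so that one power of $\tau$ is spent on pairing and the other is distributed by Young's inequality.

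First I would collect the a priori information. By \Cref{thm:stability-solid} the discrete iterates obey $E(\eta_k^\ell)\leq K$ with $K$ depending only on the data and $T$, and the limit solution obeys $E(\eta(t_k^\ell))\leq K$ (after possibly enlarging $K$). Hence, by coercivity, $\norm{\eta_k^\ell}_{W^{k_0,2}},\norm{\eta(t_k^\ell)}_{W^{k_0,2}}\leq C(K)$; the matrix fields $\nabla\eta_k^\ell,\nabla\eta(t_k^\ell)$ take values in a fixed compact set $\mathcal K\subset\R^{n\times n}_{\det>0}$; and by \Cref{lem:e-density-bounded}, $|\nabla_\xi e|,|\nabla^2_\xi e|\leq C(K)$ on $\mathcal K$. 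Moreover the strengthened hypothesis (E.1') ($e\in C^3$) makes $\nabla^2_\xi e$ Lipschitz on $\mathcal K$, with constant $L=L(K)$.

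The core step uses the $L^2$ representation $DE_1(\eta)=-\nabla^2_\xi e(\nabla\eta):\nabla^2\eta$ (legitimate because $\nabla^2\eta\in L^2$ and $\nabla^2_\xi e(\nabla\eta)\in L^\infty$, as recalled in \Cref{ssec:non-convex}). Writing $\eta':=\eta(t_k^\ell)$ and inserting $\pm\,\nabla^2_\xi e(\nabla\eta_k^\ell):\nabla^2\eta'$ gives
\[
DE_1(\eta_k^\ell)-DE_1(\eta')=-\nabla^2_\xi e(\nabla\eta_k^\ell):\nabla^2 e_k^\ell-\bigl(\nabla^2_\xi e(\nabla\eta_k^\ell)-\nabla^2_\xi e(\nabla\eta')\bigr):\nabla^2\eta' .
\]
From the bounds above, the Lipschitz estimate $|\nabla^2_\xi e(\nabla\eta_k^\ell)-\nabla^2_\xi e(\nabla\eta')|\leq L|\nabla e_k^\ell|$, the bound $\norm{\nabla^2\eta'}_{L^2}\leq C(K)$ (valid since $k_0\geq 2$), and the Sobolev embedding $W^{k_0,2}(Q)\hookrightarrow W^{1,\infty}(Q)$ (which holds by \eqref{eq:exp1}), I obtain
\[
\norm{DE_1(\eta_k^\ell)-DE_1(\eta')}_{L^2(Q)}\leq C(K)\norm{\nabla^2 e_k^\ell}_{L^2}+L\norm{\nabla e_k^\ell}_{L^\infty}\norm{\nabla^2\eta'}_{L^2}\leq C(K)\norm{e_k^\ell}_{W^{k_0,2}(Q)} .
\]
Pairing with $e_k^\ell-e_{k-1}^\ell$, Cauchy--Schwarz in $L^2$ and the factor $\tau$ yield
\[
\langle DE_1(\eta_k^\ell)-DE_1(\eta(t_k^\ell)),\,e_k^\ell-e_{k-1}^\ell\rangle\leq C(K)\,\tau\,\norm{e_k^\ell}_{W^{k_0,2}}\norm{\tfrac{e_k^\ell-e_{k-1}^\ell}{\tau}}_{L^2},
\]
and Young's inequality followed by the Poincar\'e-type inequality $\norm{e_k^\ell}_{W^{k_0,2}(Q)}\leq C_P\norm{\nabla^{k_0}e_k^\ell}_{L^2(Q)}$ (available because $e_k^\ell=\eta_k^\ell-\eta(t_k^\ell)$ vanishes on $\Gamma_D$ and satisfies the homogeneous higher-order boundary conditions there) closes the estimate with $C_1=\tfrac12 C(K)C_P^2$ and $C_2=\tfrac12 C(K)$.

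I expect the main obstacle to be the dual-norm step: one must extract from $DE_1(\eta_k^\ell)-DE_1(\eta(t_k^\ell))$ a factor that is genuinely small with the error, and for the piece originating from the variation of the coefficient $\nabla^2_\xi e(\nabla\eta)$ this forces the use of both the extra-derivative assumption (E.1') --- so that this coefficient is Lipschitz in $\nabla\eta$ --- and the fact that the limit solution already lies in $W^{2,2}$ with an energy-controlled norm, so that $\norm{\nabla^2\eta(t_k^\ell)}_{L^2}$ enters only as a harmless prefactor rather than being differentiated. The remaining, purely functional-analytic point is the higher-order Poincar\'e inequality used at the very end, which is what allows replacing the full $W^{k_0,2}$-norm of $e_k^\ell$ by its top-order part $\norm{\nabla^{k_0}e_k^\ell}_{L^2}$, as the clean form of the statement demands.
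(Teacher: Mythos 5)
Your decomposition is a valid variant of the paper's and reaches the same type of terms, but via a slightly more direct route. The paper stays in the weak form $\int_Q(\nabla_\xi e(\nabla\eta_k^\ell)-\nabla_\xi e(\nabla\eta(t_k^\ell))):\nabla(e_k^\ell-e_{k-1}^\ell)\dx$, applies the fundamental theorem of calculus along the interpolant $\eta_\theta=\theta\eta_k^\ell+(1-\theta)\eta(t_k^\ell)$, and then integrates by parts; you instead invoke the pointwise representation $DE_1(\eta)=-\nabla^2_\xi e(\nabla\eta):\nabla^2\eta$ and insert $\pm\,\nabla^2_\xi e(\nabla\eta_k^\ell):\nabla^2\eta(t_k^\ell)$. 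Both end up with one term in which $\nabla^2_\xi e$ is contracted against $\nabla^2 e_k^\ell$, and one in which a first-order increment of the coefficient $\nabla^2_\xi e$ (equivalently, $\nabla^3_\xi$ along the segment) is paired with a controlled second gradient; the subsequent H\"older/Sobolev/Poincar\'e/Young steps are the same. Your variant avoids the explicit integration by parts, which is a small simplification; the paper's variant avoids needing a two-point Lipschitz bound.

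The one step you leave implicit is the assertion that $\nabla^2_\xi e$ is Lipschitz on the compact set $\mathcal K=\{M:\det M\geq\epsilon_0,\ |M|\leq C\}$ with constant depending only on $K$. This is not automatic from $e\in C^3(\R^{n\times n}_{\det>0})$ together with compactness, because $\mathcal K$ is \emph{not convex}: for $A,B\in\mathcal K$ the straight segment $[A,B]$ can exit $\{\det>0\}$, so one cannot directly write $|\nabla^2_\xi e(A)-\nabla^2_\xi e(B)|\leq\sup|\nabla^3_\xi e|\,|A-B|$. This is exactly the technical point the paper's proof addresses by constructing the truncation $\tilde e\in C^3(\R^{n\times n})$ that agrees with $e$ on $\mathcal K$ and has globally bounded derivatives, so that $\nabla^2_\xi\tilde e$ is genuinely globally Lipschitz; the same could be achieved by splitting the domain of integration into $\{|\nabla e_k^\ell|\leq r\}$ and $\{|\nabla e_k^\ell|>r\}$, as in the proof of \Cref{thm:E1-nonconvexity}. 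With that repair (either version), your argument is correct and complete.
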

\begin{proof}
Write 
\begin{equation}
\langle DE_1(\eta_k^\ell)-DE_1(\eta(t_k^\ell)), e_k^\ell-e_{k-1}^\ell \rangle = \int_Q (\nabla_\xi e(\nabla\eta_k^\ell)-\nabla_\xi e(\nabla\eta(t_k^\ell)):\nabla(e_k^\ell-e_{k-1}^\ell)\dx
\end{equation}
and denote $\eta_\theta=\theta \eta_k^\ell +(1-\theta)\eta(t_k^\ell)$ for $\theta \in [0,1]$. We modify the energy density $e$ so that it remains bounded on $\nabla \eta_\theta$ over $Q$.

Let us take $\tilde e\colon \R^{n\times n}\to \R$, $\tilde e\in C^3(\R^{n\times n})$, $\tilde e(\xi)= e(\xi)$ for $\xi\in\R^{n\times n}$, $\det \xi \geq \epsilon_0$, $|\xi|\leq C$, where $\epsilon_0$ is the lower bound on the determinant \eqref{as:det-lower-bound} and $C$ is from the energy estimate of \Cref{thm:stability-solid}, such that $\norm{\tilde e}_{C^3(\R^{n\times n})}$ depends only on these constants. Such $\tilde e$ can be constructed e.g. as 
\begin{equation*}
\tilde e = e\cdot (\chi_{\R^{n\times n}_{\det \geq \epsilon_0}+B_\delta(0)}*\psi_\delta), \text{ where }2\delta=\operatorname{dist}\bigl(\R^{n\times n}_{\det \leq 0},\{\xi\in\R^{n\times n}: \det\xi\geq \epsilon_0, |\xi|\leq C\} \bigr),
\end{equation*}
where $\chi_M$ is the characteristic function of $M$, $\psi_\delta$ is the standard mollifier and $*$ denotes convolution.

Then we have $e(\nabla \eta_k^\ell)=\tilde e(\nabla\eta_k^\ell)$  and  $e(\nabla \eta(t_k^\ell ))=\tilde e(\nabla\eta(t_k^\ell))$. Thus, since $\tilde e$ is everywhere finite and $C^3$, we can write
\begin{equation*}
\langle DE_1(\eta_k^\ell)-DE_1(\eta(t_k^\ell)), e_k^\ell-e_{k-1}^\ell \rangle = \int_Q \int_0^1 \hspace{-.9em}\nabla_\xi^2 \tilde e(\nabla(\theta \eta_k^\ell +(1-\theta)\eta(t_k^\ell))) \dx[\theta] : \nabla e_k^\ell :\nabla (e_k^\ell-e_{k-1}^\ell) \dx[\theta] \dx
\end{equation*}
Integrating by parts, this gives 
\begin{equation*}
= -\int_Q \int_0^1 \nabla_\xi^3 \tilde e (\nabla \eta_\theta) \nabla^2\eta_\theta  \dx[\theta] : \nabla e_k^\ell : (e_k^\ell-e_{k-1}^\ell) \dx - \int_Q \int_0^1 \nabla_\xi^2 \tilde e(\nabla \eta_\theta) \dx[\theta] : \nabla^2 e_k^\ell \cdot (e_k^\ell-e_{k-1}^\ell)\dx.
\end{equation*}

Hence we can use the H\"older inequality and obtain 
\begin{multline*}
\langle DE_1(\eta_k^\ell)-DE_1(\eta(t_k^\ell)), e_k^\ell-e_{k-1}^\ell \rangle\\
\leq \int_0^1 \norm{\nabla_\xi^3 \tilde e(\nabla\eta_\theta)}_{L^\infty}\!\norm{\nabla^2\eta_\theta} \norm{\nabla e_k^\ell}_{L^\infty}\!\norm{e_k^\ell-e_{k-1}^\ell} + \norm{\nabla_\xi^2\tilde e(\nabla\eta_\theta))}_{L^\infty}\norm{\nabla^2e_k^\ell}\norm{e_k^\ell-e_{k-1}^\ell}\! \dx[\theta].
\end{multline*}
We have, by our estimates, ($C$ independent of time)
\begin{equation*}
\norm{\nabla^3_\xi \tilde e(\nabla\eta_\theta)}_{L^\infty}, \norm{\nabla^2 \eta_\theta} \leq C
\end{equation*}
and by the embedding $W^{1,\infty}(Q)\subset W^{k_0,2}(Q)$
\begin{equation*}
\norm{\nabla e_k^\ell}_{L^\infty}\leq C \norm{\nabla^{k_0} e_k^\ell}
\end{equation*}
which in total, after Poincar\'e inequality $\norm{\nabla^2 e_k^\ell} \leq C\norm{\nabla^{k_0} e_k^\ell}$ gives
\begin{equation*}
\langle DE_1(\eta_k^\ell)-DE_1(\eta(t_k^\ell)), e_k^\ell-e_{k-1}^\ell \rangle\leq C\norm{\nabla^{k_0}e_k^\ell}\norm{e_k^\ell-e_{k-1}^\ell} \leq C_1\tau \norm{\nabla^{k_0} e_k^\ell}^2 + C_2 \tau\norm{\frac{e_k^\ell-e_{k-1}^\ell}{\tau}}^2
\end{equation*}
with constants $C_1,C_2$ independent of $\tau$.
\end{proof}

\begin{remark}
A more explicit construction of $\tilde e$ can be obtained in the case that $e$ explicitly depends on $\det \nabla \eta$. Suppose that $e\in C^3(\R^{n\times n}_{\det > 0})$ is of the form 
\begin{equation*}
e(\xi)= h(\xi, \det \xi)\quad \text{ with }\quad h\in C^3(\R^{n\times n}\times (0,\infty)).
\end{equation*}
Then we can truncate 
\begin{equation*}
\tilde h(\xi,\det\xi ) = h(\xi, \psi(\det \xi) )),
\end{equation*}
where $\psi$ is some smoothing of $\max(\epsilon_0,\cdot)$, so e.g. $\psi\in C^3(\R)$ with $\psi(t)=t$ for $t\geq \epsilon_0$, $\psi(t)=\epsilon_0/2$ for $t\leq \epsilon_0/2$ and $|\psi''|\leq C/\epsilon_0$. Then 
\begin{equation*}
\tilde e(\xi)=\tilde h(\xi,\det\xi)
\end{equation*}
fulfils the required properties.
\end{remark}

\begin{lemma}\label{lem:ineq-DE2}
We have for $C\equiv C(\eta_0,\eta_*,f)$ given by \eqref{eqn:time-reg-estimates} that
\begin{equation*}
\left \langle \fint_0^\tau\fint_0^h DE(\eta(t_{k-1}^\ell+s+\sigma)) -DE(\eta(t_k^\ell))\dx[s]\dx[\sigma],e_k^\ell-e_{k-1}^\ell \right \rangle  \leq C(h+\tau)\norm{e_k^\ell-e_{k-1}^\ell}.
\end{equation*}
\end{lemma}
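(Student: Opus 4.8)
The plan is to reduce the estimate, via the Cauchy--Schwarz inequality on $L^2(Q)$, to the bound
$$
\norm{\fint_0^\tau\fint_0^h \big(DE(\eta(t_{k-1}^\ell+s+\sigma))-DE(\eta(t_k^\ell))\big)\dx[s]\dx[\sigma]}_{L^2(Q)}\leq C(h+\tau),
$$
and then to deduce this from the fact that the curve $t\mapsto DE(\eta(t))$ is Lipschitz continuous from $[0,T]$ into $L^2(Q)$, with Lipschitz constant bounded by $C(\eta_0,\eta_*,f)$. Granting this, observe that for $s\in(0,\tau)$ and $\sigma\in(0,h)$ one has $|t_{k-1}^\ell+s+\sigma-t_k^\ell|=|s+\sigma-\tau|\leq h+\tau$; hence the integrand has $L^2(Q)$-norm at most $C(h+\tau)$, and pulling the norm inside the double average by Minkowski's integral inequality gives the displayed bound. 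Pairing with $e_k^\ell-e_{k-1}^\ell$ then yields the claim.

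To obtain the Lipschitz bound I would use the equation \eqref{eqn:eta-convrate-eqn}: for a.e.\ $t$ it gives $DE(\eta(t))=f(t)-\partial_{tt}\eta(t)$ as an identity in $L^2(Q)$, which in particular identifies $DE(\eta(t))$ with an honest $L^2(Q)$-function, since under the present hypotheses $\partial_{tt}\eta\in L^\infty((0,T);L^2(Q))$ by \Cref{thm:higher-space-regularity}. Differentiating in time, $\partial_t\big(DE(\eta(t))\big)=\partial_tf(t)-\partial_{ttt}\eta(t)$. Here $\partial_{ttt}\eta\in L^\infty((0,T);L^2(Q))$ with the bound \eqref{eqn:time-reg-estimates}, again by \Cref{thm:higher-space-regularity} (whose hypotheses are exactly those of \Cref{thm:conv}), while $f\in W^{2,2}((0,T);L^2(Q))$ gives $\partial_tf\in W^{1,2}((0,T);L^2(Q))\hookrightarrow L^\infty((0,T);L^2(Q))$. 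Therefore $\norm{\partial_t(DE(\eta))}_{L^\infty((0,T);L^2(Q))}\leq C(\eta_0,\eta_*,f)$, which is the required Lipschitz estimate.

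The \emph{only delicate point} is the passage from $X^*$ to $L^2(Q)$: a priori $DE(\eta(t))$ lives only in $X^*$, and one must know both that it is represented by an $L^2(Q)$-function and that the time-differentiated identity is legitimate — this is precisely what \Cref{thm:higher-space-regularity} supplies, and it is the reason the strengthened smoothness of the data is assumed. As an alternative one may avoid the equation and bound $DE(\eta)=(-\Delta)^{k_0}\eta-\div(\nabla_\xi e(\nabla\eta))$ directly in $W^{1,\infty}((0,T);L^2(Q))$: the contribution $\Delta^{k_0}\partial_t\eta\in L^\infty((0,T);L^2(Q))$ is given by \Cref{thm:higher-space-regularity}, and the Lipschitz-in-time continuity of $t\mapsto\div(\nabla_\xi e(\nabla\eta(t)))=\nabla^2_\xi e(\nabla\eta):\nabla^2\eta$ valued in $L^2(Q)$ follows from (E.1$'$), the embedding $W^{k_0,2}(Q)\subset W^{1,\infty}(Q)$, and $\partial_t\eta\in L^\infty((0,T);W^{k_0,2}(Q))$. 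Either way, once the Lipschitz bound is in hand, Minkowski's inequality together with Cauchy--Schwarz completes the proof.
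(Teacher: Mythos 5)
Your proof is correct, and its underlying mechanism is the same as the paper's: everything reduces to the Lipschitz continuity of $t\mapsto DE(\eta(t))$ as an $L^2(Q)$-valued map, which is exactly what the time-regularity estimates \eqref{eqn:time-reg-estimates} of \Cref{thm:higher-space-regularity} provide, followed by Minkowski and Cauchy--Schwarz. The packaging differs slightly. The paper splits $DE=DE_1+DE_2$, treats only the $E_2$-part explicitly (integrating by parts to move $(-\Delta)^{k_0}$ onto $\eta$ and invoking $\Delta^{k_0}\partial_t\eta\in L^\infty((0,T);L^2(Q))$), and remarks that the $E_1$-part ``follows in the same way but needs much less regularity''; this is precisely your \emph{alternative} route. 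Your primary route instead substitutes the equation, $DE(\eta)=f-\partial_{tt}\eta$, so that $\partial_t\bigl(DE(\eta)\bigr)=\partial_t f-\partial_{ttt}\eta\in L^\infty((0,T);L^2(Q))$ in one stroke; this is a clean shortcut that handles $DE_1$ and $DE_2$ simultaneously and avoids re-deriving the Lipschitz bound for the nonlinear term $\div(\nabla_\xi e(\nabla\eta))$. The only point to be careful about --- which you correctly flag --- is that the identity $DE(\eta(t))=f(t)-\partial_{tt}\eta(t)$ must first be upgraded from a distributional statement to an a.e.\ identity of $L^2(Q)$-valued functions before differentiating in time, and that its weak time derivative is then legitimately identified with $\partial_tf-\partial_{ttt}\eta$; both steps are justified by the regularity supplied in \Cref{thm:higher-space-regularity}. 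A very minor remark: $|s+\sigma-\tau|\leq\max(\tau,h)$, so your bound $h+\tau$ is valid (and in fact slightly generous, as is the paper's $\tfrac{\tau+h}{2}$ after averaging).
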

\begin{proof}
We only provide here the estimate for $E_2$ the estimate for $E_1$ follows in the same way but needs much less regularity.
Recall the definition of $E_2$ in \eqref{eqn:Wk02-setting} and compute, given the regularity from \Cref{thm:higher-space-regularity}
\begin{multline*}
\langle \fint_0^\tau\fint_0^h DE_2(\eta(t_{k-1}^\ell+s+\sigma)) -DE_2(\eta(t_k^\ell))\dx[s]\dx[\sigma],e_k^\ell-e_{k-1}^\ell \rangle \\ = \fint_0^\tau \fint_0^h \int_Q \nabla^{k_0}(\eta(t_{k-1}^\ell+s+\sigma)-\eta(t_k^\ell)):\nabla^{k_0} (e_k^\ell-e_{k-1}^\ell) \dx\dx[s]\dx[\sigma]\\
=  \int_Q \fint_0^\tau \fint_0^h\nabla^{k_0}(\eta(t_{k-1}^\ell+s+\sigma)-\eta(t_k^\ell))\dx[s]\dx[\sigma]:\nabla^{k_0} (e_k^\ell-e_{k-1}^\ell) \dx\\
=  \int_Q \fint_0^\tau \fint_0^h(-\Delta)^{k_0}(\eta(t_{k-1}^\ell+s+\sigma)-\eta(t_k^\ell))\dx[s]\dx[\sigma]: (e_k^\ell-e_{k-1}^\ell) \dx
\end{multline*}
Now
\begin{multline*}
\norm{\fint_0^\tau \fint_0^h (-\Delta)^{k_0}(\eta(t_{k-1}^\ell+s+\sigma)-\eta(t_k^\ell))\dx[s]\dx[\sigma]}\leq \fint_0^\tau\fint_0^h (s+\sigma)\norm{\partial_t(-\Delta)^{k_0} \eta} \dx[s]\dx[\sigma]\\=
 \norm{\partial_t(-\Delta)^{k_0}\eta}_{L^\infty(((0,T);L^2(Q))}\frac{\tau+h}{2},
\end{multline*}
which implies the result by \Cref{thm:higher-space-regularity}.
\end{proof}

Finally we are in the position to show the convergence rate result.

\begin{theorem}[Convergence rate for elastic solid]\label{thm:convergence-rate-solid}Let the initial data satisfy  \begin{equation*}
\eta_0\in W^{2k_0,2}(Q),\quad \eta_*\in W^{ 2 k_0,2}(Q), \quad \partial_t f\in L^2((0,T);L^2(Q)).
\end{equation*}

There exists a $h_0>0$ and constants $C_1,C$ depending on $E(\eta_0)$, $\norm{\eta_*}_{L^2}$, $\norm{f}_{L^2((0,T);L^2(Q))}$ and $T$, such that for all $0<\tau\leq h\leq h_0$ (recall that we have $N\tau=h$ and $Mh=T$) the following convergence rate estimate holds
\begin{equation*}
\max_{k=1,\dots,N} \left( \frac12 \norm{\nabla^{k_0}e_k^\ell}^2 + \frac{1}{2N} \sum_{i=1}^k \norm{\frac{e_i^\ell-e_{i-1}^\ell}{\tau}}^2 \right) \leq (\tau^2+h^2)CT e^{C_1 h\ell}, \quad \ell=1,\dots,M.
\end{equation*}
\end{theorem}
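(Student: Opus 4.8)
The plan is to run a discrete energy argument on the error $e_k^\ell=\eta_k^\ell-\eta(t_k^\ell)$ and feed it into the two--scale Gronwall inequality \Cref{thm:two-scale-gronwall}. The starting point is the tested error identity \eqref{eqn:convrate-aftertesting}, obtained by subtracting the Euler--Lagrange equation \eqref{eqn:eta-discrete-EL-eqn} from the time--averaged PDE \eqref{eqn:eta-discretized-eqn} — here the particular double average \eqref{eqn:fkl-def} of $f$ is exactly what makes the right--hand sides cancel, leaving only differences of $DE(\eta)$ — and then testing with $\frac{e_k^\ell-e_{k-1}^\ell}{\tau}$. In \eqref{eqn:convrate-aftertesting} I would discard the two nonnegative ``jump'' terms $\frac{1}{2h}\norm{\frac{e_k^\ell-e_{k-1}^\ell}{\tau}-\frac{e_k^{\ell-1}-e_{k-1}^{\ell-1}}{\tau}}^2$ and $\frac{1}{2\tau}\norm{\nabla^{k_0}e_k^\ell-\nabla^{k_0}e_{k-1}^\ell}^2$ from the left, and estimate the two remaining inner products on the right by \Cref{lem:ineq-DE1} (the lower--order nonlinear term; its proof in fact bounds the absolute value, which is the sign needed here) and \Cref{lem:ineq-DE2} (the consistency error). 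To invoke \Cref{lem:ineq-DE1} one uses the energy bound of \Cref{thm:stability-solid}, so the argument runs under the same no--collision assumption and for $h\le h_0$; to invoke \Cref{lem:ineq-DE2} one uses the $\epsilon$--independent time regularity \eqref{eqn:time-reg-estimates} of the limit solution, available from \Cref{thm:higher-space-regularity} under the stated regularity of $\eta_0,\eta_*,f$.

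After these substitutions, writing $\norm{e_k^\ell-e_{k-1}^\ell}=\tau\norm{\frac{e_k^\ell-e_{k-1}^\ell}{\tau}}$ and applying Young's inequality to the consistency contribution $C(h+\tau)\norm{\frac{e_k^\ell-e_{k-1}^\ell}{\tau}}\le\frac12\norm{\frac{e_k^\ell-e_{k-1}^\ell}{\tau}}^2+\frac{C^2}{2}(h+\tau)^2$, I would multiply the whole inequality by $\tau$. With $N=h/\tau$, $a_k^\ell=\frac12\norm{\nabla^{k_0}e_k^\ell}^2$, $b_k^\ell=\frac12\norm{\frac{e_k^\ell-e_{k-1}^\ell}{\tau}}^2$, this becomes
\[
a_k^\ell+\frac1N b_k^\ell\le a_{k-1}^\ell+\frac1N b_k^{\ell-1}+c\,a_k^\ell+c\,b_k^\ell+d_k^\ell
\]
with $c=\max(2C_1,2C_2+1)\,\tau$ and $d_k^\ell=\frac{C^2\tau}{2}(h+\tau)^2$ — exactly the hypothesis of \Cref{thm:two-scale-gronwall}. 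The smallness condition $cN\le1/4$ there reads $\max(2C_1,2C_2+1)\,h\le1/4$; intersecting with the $h\le h_0$ coming from \Cref{thm:stability-solid} fixes the final $h_0$. The recursion starts from zero: $e_0^0=\eta_0^0-\eta(0)=0$ since $\eta(0)=\eta_0$, and by the $\ell=0$ convention the incoming discrete velocity equals $\eta_*$ for both the scheme and the averaged solution, so the initial velocity error vanishes too; hence $a_0^0=b_0^0=0$ (equivalently $b_k^{-1}=0$).

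\Cref{thm:two-scale-gronwall} then yields
\[
\max_{k=1,\dots,N}\Bigl(a_k^\ell+\frac1N\sum_{i=1}^k b_i^\ell\Bigr)\le\Bigl(\sum_{l=1}^\ell\sum_{k=1}^N d_k^l\Bigr)e^{4cN\ell}.
\]
Since $\sum_{l=1}^\ell\sum_{k=1}^N d_k^l=\frac{\ell h\,C^2}{2}(h+\tau)^2\le C^2T(\tau^2+h^2)$ (using $N\tau=h$, $\ell h\le T$ and $(h+\tau)^2\le2(\tau^2+h^2)$) and $4cN\ell=4\max(2C_1,2C_2+1)\,h\ell$, this is precisely the claimed estimate after renaming the constants ($C\leadsto$ prefactor, $4\max(2C_1,2C_2+1)\leadsto C_1$).

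I expect the genuine obstacle to have been discharged already: it lies in \Cref{lem:ineq-DE1}, namely absorbing the non--convex, non--linear lower--order term $DE_1$ \emph{without losing a power of $\tau$}. The mechanism there — truncating the energy density to a globally $C^3$ function using the determinant lower bound from stability, then trading one factor $\norm{e_k^\ell-e_{k-1}^\ell}=\tau\norm{\frac{e_k^\ell-e_{k-1}^\ell}{\tau}}$ for a power of $\tau$ while controlling $\norm{\nabla^2 e_k^\ell}$ and $\norm{\nabla e_k^\ell}_{L^\infty}$ by the top--order norm $\norm{\nabla^{k_0}e_k^\ell}$ via Poincar\'e and the embedding $W^{k_0,2}\subset W^{1,\infty}$ — is exactly what keeps the Gronwall coefficient $c$ of size $O(\tau)$, so that $cN=O(h)$ stays bounded and the final rate is genuinely $O(\tau^2+h^2)$, uniformly in the relation between $\tau$ and $h$. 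The only remaining care in assembling the pieces is bookkeeping with the two time scales and checking that the discretization $f_k^\ell$ produces no consistency error beyond the $DE(\eta)$ differences already estimated in \Cref{lem:ineq-DE2}.
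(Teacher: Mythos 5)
Your proposal is correct and follows essentially the same route as the paper: subtract the Euler--Lagrange identity from the time-averaged PDE, test with the discrete velocity error, invoke \Cref{lem:ineq-DE1} and \Cref{lem:ineq-DE2}, absorb via Young's inequality, and close with \Cref{thm:two-scale-gronwall} under the smallness condition $cN\le 1/4$. If anything you are slightly more careful than the paper's write-up in retaining the $c\,a_k^\ell$ term produced by \Cref{lem:ineq-DE1} when casting the recursion into the exact form required by the two-scale Gronwall lemma.
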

\begin{proof}
Througout the proof $C$ is a constant, depending on on $E(\eta_0)$, $\norm{\eta_*}_{L^2}$, $\norm{f}_{L^2((0,T);L^2(Q))}$ and $T$.
From \eqref{eqn:convrate-aftertesting} we get, omitting the $\frac{1}{2h}\norm{\frac{e_k^\ell-e_{k-1}^\ell}{\tau}-\frac{e_k^{\ell-1}-e_{k-1}^{\ell-1}}{\tau}}^2 $ term and multiplying by $\tau$, 
\begin{multline*}
\frac{\tau}{2h}\norm{\frac{e_k^\ell-e_{k-1}^\ell}{\tau}}^2 + \frac{1}{2}\norm{\nabla^{k_0} e_k^\ell}^2 + \frac{1}{2} \norm{\nabla^{k_0} e_k^\ell-\nabla^{k_0} e_{k-1}^\ell}^2 +c\tau^2 \left\| \nabla\frac{e_k^\ell-e_{k-1}^\ell}{\tau} \right\|^2
\\ \leq \frac{\tau}{2h}\norm{\frac{e_k^{\ell-1}-e_{k-1}^{\ell-1}}{\tau}}^2 + \frac{1}{2}\norm{\nabla^{k_0} e_{k-1}^\ell}^2 -\langle DE_1(\eta_k^\ell)-DE_1(\eta(t_k^\ell)), e_k^\ell-e_{k-1}^\ell\rangle 
 \\ +\left\langle \fint_0^\tau\fint_0^h DE(\eta(t_{k-1}^\ell+s+\sigma)) -DE(\eta(t_k^\ell))\dx[s]\dx[\sigma],e_k^\ell-e_{k-1}^\ell\right\rangle \\ -c\tau^2 \left\langle\nabla \frac{\eta(t_k^\ell)-\eta(t_{k-1}^\ell)}{\tau}, \nabla\frac{e_k^\ell-e_{k-1}^\ell}{\tau} \right\rangle.
\end{multline*}
Using the inequalities from \Cref{lem:ineq-DE1} and \Cref{lem:ineq-DE2}
\begin{multline*}
\frac{\tau}{2h}\norm{\frac{e_k^\ell-e_{k-1}^\ell}{\tau}}^2 + \frac{1}{2}\norm{\nabla^{k_0} e_k^\ell}^2 + \frac{1}{2} \norm{\nabla^{k_0} e_k^\ell-\nabla^{k_0} e_{k-1}^\ell}^2+c\tau^2 \left\| \nabla\frac{e_k^\ell-e_{k-1}^\ell}{\tau} \right\|^2
\\ \leq \frac{\tau}{2h}\norm{\frac{e_k^{\ell-1}-e_{k-1}^{\ell-1}}{\tau}}^2\!\! + \frac{1}{2}\norm{\nabla^{k_0} e_{k-1}^\ell}^2\! +\tau C_1\norm{\nabla^{k_0} e_k^\ell}^2\!+  \tau C_2\norm{\frac{e_k^\ell-e_{k-1}^\ell}{\tau}}^2
\!+C(h+\tau)\norm{e_k^\ell-e_{k-1}^\ell}\\ -c\tau^2 \left\langle\nabla \frac{\eta(t_k^\ell)-\eta(t_{k-1}^\ell)}{\tau}, \nabla\frac{e_k^\ell-e_{k-1}^\ell}{\tau} \right\rangle
\end{multline*}
For the last term, thanks to the regularity of \Cref{thm:higher-space-regularity} $\Delta\frac{\eta(t_k^\ell)-\eta(t_{k-1}^\ell)}{\tau}= \frac1\tau\int_{t_{k-1}^\ell}^{t_k^\ell}\partial_t\Delta \eta \,dt $ get the estimate
\begin{multline*}
 -c\tau^2\left\langle \nabla\frac{\eta(t_k^\ell)-\eta(t_{k-1}^\ell)}{\tau},\nabla\frac{e_k^{\ell}-e_{k-1}^{\ell}}{\tau} \right\rangle=c\tau^2 \left\langle \Delta\frac{\eta(t_k^\ell)-\eta(t_{k-1}^\ell)}{\tau},\frac{e_k^{\ell}-e_{k-1}^{\ell}}{\tau} \right\rangle \\ \leq  c\tau^2 \|\partial_t\Delta\eta\|_{L^\infty((0,T);L^2(Q))} \left\| \frac{e_k^{\ell}-e_{k-1}^{\ell}}{\tau} \right\|.
 \end{multline*}
Finally use Young inequality and the estimate of \Cref{thm:higher-space-regularity}
$$ c\tau^2 \|\partial_t\Delta\eta\|_{L^\infty((0,T);L^2(Q))} \left\| \frac{e_k^{\ell}-e_{k-1}^{\ell}}{\tau} \right\| \leq C\tau^3+C\tau\left\| \frac{e_k^{\ell}-e_{k-1}^{\ell}}{\tau} \right\|^2.$$

By a further Young inequality 
\begin{equation*}
C(\tau+h)\norm{e_k^\ell-e_{k-1}^\ell}\leq \tau C (\tau^2+h^2) + \tau \frac12\norm{\frac{e_k^\ell-e_{k-1}^\ell}{\tau}}^2
\end{equation*}
we arrive at\begin{multline*}
\frac{\tau}{2h}\norm{\frac{e_k^\ell-e_{k-1}^\ell}{\tau}}^2 + \frac{1}{2}\norm{\nabla^{k_0} e_k^\ell}^2  +c\tau^2 \left\| \nabla\frac{e_k^\ell-e_{k-1}^\ell}{\tau} \right\|^2
\\ \leq \frac{\tau}{2h}\norm{\frac{e_k^{\ell-1}-e_{k-1}^{\ell-1}}{\tau}}^2\!\! + \frac{1}{2}\norm{\nabla^{k_0} e_{k-1}^\ell}^2\! +\tau C_1\norm{\nabla^{k_0} e_k^\ell}^2\!+  \tau C_2\norm{\frac{e_k^\ell-e_{k-1}^\ell}{\tau}}^2
\\ +\tau C (\tau^2+h^2) + \tau \frac12\norm{\frac{e_k^\ell-e_{k-1}^\ell}{\tau}}^2 +C\tau^3+C\tau\left\| \frac{e_k^{\ell}-e_{k-1}^{\ell}}{\tau} \right\|^2.
\end{multline*}

we get, denoting $a_k^\ell = \frac12 \norm{\nabla^{k_0}e_k^\ell}^2$, $b_k^\ell = \frac12\norm{\frac{e_k^\ell-e_{k-1}^\ell}{\tau}}^2$ that 
\begin{equation*}
a_k^\ell+\frac1N b_k^\ell \leq a_{k-1}^\ell+\frac1N b_k^{\ell-1} + \tau (C_2+1/2+C) b_k^\ell +\tau C(3\tau^2+h^2) .
\end{equation*}
Thus we can use Two-scale Gronwall inequality \Cref{twoscale-gronwall-classic} and obtain, since $a_0^0=0$, $b_0^0=0$, and $MN\tau =T$
\begin{equation*}
\max_{k=1,\dots,N} \left( \frac12 \norm{\nabla^{k_0}e_k^\ell}^2 + \frac{1}{2N} \sum_{i=1}^k \norm{\frac{e_i^\ell-e_{i-1}^\ell}{\tau}}^2 \right) \leq CT(\tau^2+h^2) e^{(C_2+1/2+C)h\ell}.
\end{equation*}
\end{proof}
%
%
%
%

\section{Numerical experiments}\label{sec:numerics}
We discuss some numerical experiments on the case of ODE in one dimension. This experiments do verify:

\begin{enumerate}
\item The {\em optimality of the rates} demonstrated in this paper to be qualitatively optimal already in the simplest possible set up. 
\item A characteristic danger in non-convex regimes, which is the possibility to ``land in the wrong well''; summarized in the necessity of the appearance of $\tau_0$ in the stability and convergence results.
\item The expected differences between the fully discrete, the time-delayed and the continuous solution. That is that the difference between the time-delayed solution and the time-discrete solution is of order $\tau$, while the difference between the time-delayed solution and the limit solution is of order $h$. This also indicates that choosing $\tau=h$ is commonly optimal with regard to convergence.
\end{enumerate} 
We consider a double-well potential, with minima at $1$ and $-1$
\begin{equation*}
E(x)=(x^2-1)^2.
\end{equation*}
We will compare the solution of
\begin{align*}
x''(t)+E'(x(t))&=0,\quad t\in (0,T)\\
x(0)&=x_0,\\ x'(0)&= x_*
\end{align*}
with that of our minimizing movements approximation $\overline x_{(\tau)}^{(h)}$. We will consider here the case $\tau=h$, which is according to our theory a good choice. Recall that our minimizing movements approximation is then defined as 
\begin{equation*}
x_k=\argmin_{x\in\R} \frac{\tau^2}{2}\abs{\frac{\frac{x-x_{k-1}}{\tau}-\frac{x_{k-1}-x_{k-2}}{\tau}}{\tau}}^2 + E(x),
\end{equation*}
where for $k=1$ we replace the fraction $\frac{x_0-x_{-1}}{\tau}$ by $x_*$, and the piecewise constant resp. piecewise affine interpolations $\overline x_{(\tau)}$ resp. $\hat x_{(\tau)}$ are defined as 
\begin{align*}
\overline x_{(\tau)}(t) &= x_k, \quad t\in ((k-1) \tau, k\tau],\\
\hat x_{(\tau)}(t) &= \frac{t-(k-1)\tau}{\tau} x_k + \frac{k\tau -t}{\tau }x_{k-1}, \quad  t\in [(k-1) \tau, k\tau].
\end{align*}

Further for comparison we include the solution of the \emph{time-delayed equation}
\begin{align*}
 \frac{x'(t)-x'(t-h)}{h} +E'(x(t))&=0,\\
 x(0)&=x_0,\\ x'(s)&=x_*,\ s\in(-h,0].
\end{align*}
\begin{figure}[!ht] 
\begin{center}    
	$h=\tau=1$:\\
    \includegraphics[width=0.6\paperwidth]{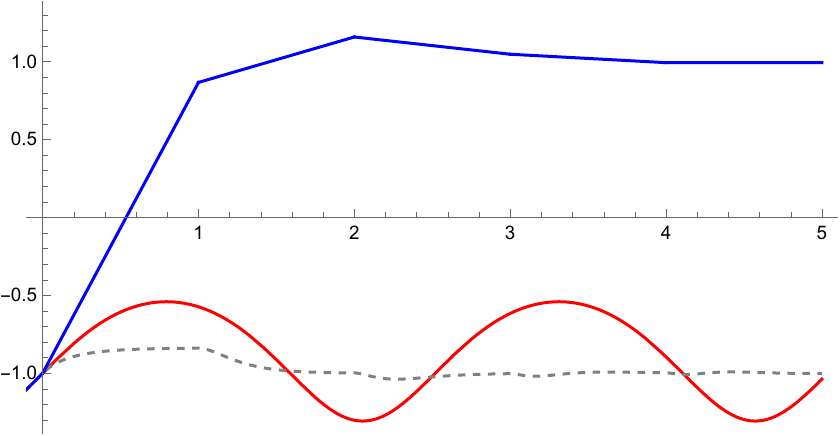}
    \end{center}
 \end{figure}
 \begin{figure}[!ht]
\begin{center}
    $h=\tau=1/5$:\\
    \includegraphics[width=0.6\paperwidth]{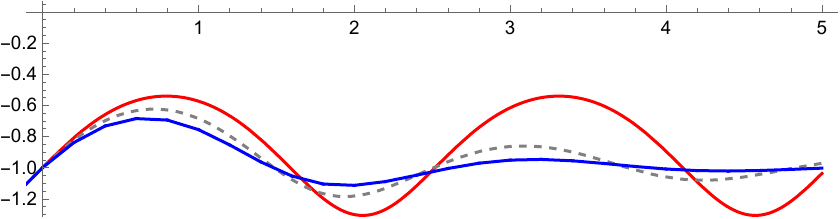}
    \end{center}
 \end{figure}
 \begin{figure}[!ht]
\begin{center}
    $h=\tau=10^{-2}$:\\    
    \includegraphics[width=0.6\paperwidth]{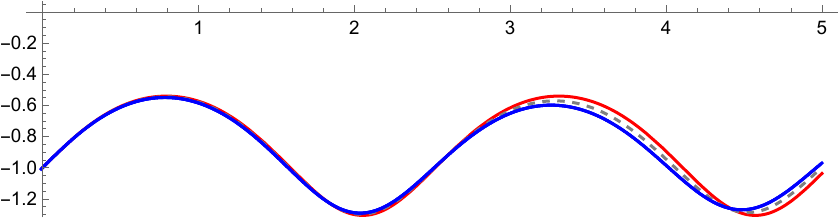}
    \end{center}
    \caption{Comparison of the our approximation $\hat \eta_{(\tau)}$ (blue) with the time-delayed solution (dashed) and the limit solution (red). Note that with too large parameter $h=\tau=1$ the solution overshoots and ends up in the other local minimum of $E$.}
\end{figure}
\begin{figure}[!ht]
\begin{tabular}{ | c |c|  } 
  \hline
  $\tau = h$ &  $\norm{x-\overline{x}_{\tau}}_{L^\infty((0,T))}$ \\ 
  \hline\hline
  $1$&  2.4589  \\ 
   \hline
 $10^{-1}$&  0.258051 \\ 
  \hline
 $10^{-2}$&  0.0821305 \\ 
  \hline
  $10^{-3}$&  0.00995943 \\ 
  \hline
\end{tabular}
\caption{Table showing the predicted error decay of the minimizing movements approximation, for different time steps.}
\end{figure}

\newpage
\subsection*{Acknowledgments}

\noindent
The authors acknowledge the support of the ERC-CZ grant LL2105, and the support of Charles University, project GA UK No.\@ 393421. S. S. further acknowledges the support of the VR-Grant 2022-03862 by the Swedish Research Council. A. \v C. further acknowledges the support of the Czech Science Foundation (GAČR) grant No. 23-04766S.

\bibliographystyle{alpha}
\bibliography{biblio}

\begin{thebibliography}{DSET01}

\bibitem[Bal76]{Ball1976}
John~M. Ball.
\newblock Convexity conditions and existence theorems in nonlinear elasticity.
\newblock {\em Archive for Rational Mechanics and Analysis}, 63(4):337--403,
  dec 1976.

\bibitem[BFK23]{BadFriKru22}
Rufat Badal, Manuel Friedrich, and Martin Kru{\v{z}}{\'\i}k.
\newblock Nonlinear and linearized models in thermoviscoelasticity.
\newblock {\em Archive for Rational Mechanics and Analysis}, 247(1):5, 2023.

\bibitem[BGN10]{barrett2010numerical}
John~W. Barrett, Harald Garcke, and Robert N{\"u}rnberg.
\newblock Numerical approximation of gradient flows for closed curves in rd.
\newblock {\em Ima Journal of Numerical Analysis}, 30:4--60, 2010.

\bibitem[BK11]{BarKru11}
S{\"o}ren Bartels and Martin Kru{\v{z}}{\'\i}k.
\newblock An efficient approach to the numerical solution of rate-independent
  problems with nonconvex energies.
\newblock {\em Multiscale Modeling \& Simulation}, 9(3):1276--1300, 2011.

\bibitem[BKS23a]{Benesova2023}
B.~Bene{\v{s}}ov{\'{a}}, M.~Kampschulte, and S.~Schwarzacher.
\newblock Variational methods for fluid{\textendash}structure interaction and
  porous media.
\newblock {\em Nonlinear Analysis: Real World Applications}, 71:103819, jun
  2023.

\bibitem[BKS23b]{BenesovaKampschulteSchwarzacher}
Barbora Bene{\v s}ov{\'a}, Malte Kampschulte, and Sebastian Schwarzacher.
\newblock A variational approach to hyperbolic evolutions and fluid-structure
  interactions.
\newblock {\em to appear in JEMS}, August 2023.

\bibitem[CD04]{Carstensen2004}
Carsten Carstensen and Georg Dolzmann.
\newblock Time-{{Space Discretization}} of the {{Nonlinear Hyperbolic System}}.
\newblock {\em SIAM Journal on Numerical Analysis}, 42(1):75--89, January 2004.

\bibitem[CN87]{Ciarlet1987}
Philippe~G. Ciarlet and Jind{\v{r}}ich Ne{\v{c}}as.
\newblock Injectivity and self-contact in nonlinear elasticity.
\newblock {\em Archive for Rational Mechanics and Analysis}, 97(3):171--188,
  sep 1987.

\bibitem[Dac07]{Dacorogna2007}
Bernard Dacorogna.
\newblock {\em Direct Methods in the Calculus of Variations (Applied
  Mathematical Sciences)}.
\newblock Springer, 2007.

\bibitem[DG93]{DeGiorgi}
Ennio De~Giorgi.
\newblock New problems on minimizing movements.
\newblock {\em Ennio de Giorgi: Selected Papers}, pages 699--713, 1993.

\bibitem[Dog00]{Doghri2000}
Issam Doghri.
\newblock {\em Mechanics of Deformable Solids}.
\newblock Springer Berlin Heidelberg, 2000.

\bibitem[DSET01]{Demoulini2001}
Sophia Demoulini, David M.~A. Stuart, and Athanasios E.~Tzavaras.
\newblock A {{Variational Approximation Scheme}}
  for\textparagraph{{Three-Dimensional Elastodynamics}}\textparagraph with
  {{Polyconvex Energy}}.
\newblock {\em Archive for Rational Mechanics and Analysis}, 157(4):325--344,
  May 2001.

\bibitem[FD97]{Friesecke1997}
G.~Friesecke and G.~Dolzmann.
\newblock Implicit {{Time Discretization}} and {{Global Existence}} for a
  {{Quasi-Linear Evolution Equation}} with {{Nonconvex Energy}}.
\newblock {\em SIAM Journal on Mathematical Analysis}, 28(2):363--380, March
  1997.

\bibitem[GO13]{gigli2013entropic}
Nicola Gigli and Felix Otto.
\newblock Entropic burgers’ equation via a minimizing movement scheme based
  on the wasserstein metric.
\newblock {\em Calculus of Variations and Partial Differential Equations},
  47:181--206, 2013.

\bibitem[HK09]{Healey2009}
Timothy~J. Healey and Stefan Krömer.
\newblock Injective weak solutions in second-gradient nonlinear elasticity.
\newblock {\em ESAIM: Control, Optimisation and Calculus of Variations},
  15(4):863--871, jul 2009.

\bibitem[HP10]{Haehnle2010}
Jonas Haehnle and Andreas Prohl.
\newblock Approximation of nonlinear wave equations with nonstandard
  anisotropic growth conditions.
\newblock {\em Mathematics of Computation}, 79(269):189--189, January 2010.

\bibitem[Ka{\v{c}}86]{Kacur1986}
J.~Ka{\v{c}}ur.
\newblock Method of rothe in evolution equations.
\newblock In {\em Equadiff 6}, pages 23--34. Springer Berlin Heidelberg, 1986.

\bibitem[KR19]{Kruzik2019}
Martin Kru{\v{z}}{\'{\i}}k and Tom{\'{a}}{\v{s}} Roub{\'{\i}}{\v{c}}ek.
\newblock {\em Mathematical Methods in Continuum Mechanics of Solids}.
\newblock Springer International Publishing, 2019.

\bibitem[Kru98]{Kru98}
Martin Kru{\v z}{\'\i}k.
\newblock Numerical approach to double well problems.
\newblock {\em SIAM journal on numerical analysis}, 35(5):1833--1849, 1998.

\bibitem[KSS23]{Kampschulte2023}
Malte Kampschulte, Sebastian Schwarzacher, and Gianmarco Sperone.
\newblock Unrestricted deformations of thin elastic structures interacting with
  fluids.
\newblock {\em Journal de Math{\'{e}}matiques Pures et Appliqu{\'{e}}es},
  173:96--148, may 2023.

\bibitem[LO16]{laux2016convergence}
Tim Laux and Felix Otto.
\newblock Convergence of the thresholding scheme for multi-phase mean-curvature
  flow.
\newblock {\em Calculus of Variations and Partial Differential Equations},
  55(5):129, 2016.

\bibitem[May00]{mayer2000numerical}
Uwe~F Mayer.
\newblock A numerical scheme for moving boundary problems that are gradient
  flows for the area functional.
\newblock {\em European Journal of Applied Mathematics}, 11(1):61--80, 2000.

\bibitem[MR06]{MieRou06}
Alexander Mielke and Tom{\'a}{\v{s}} Roub{\'\i}{\v{c}}ek.
\newblock Rate-independent damage processes in nonlinear elasticity.
\newblock {\em Mathematical Models and Methods in Applied Sciences},
  16(02):177--209, 2006.

\bibitem[MR16]{MieRou16}
Alexander Mielke and Tom{\'a}{\v{s}} Roub{\'\i}{\v{c}}ek.
\newblock Rate-independent elastoplasticity at finite strains and its numerical
  approximation.
\newblock {\em Mathematical Models and Methods in Applied Sciences},
  26(12):2203--2236, 2016.

\bibitem[MR20]{MieRou20}
Alexander Mielke and Tom{\'a}{\v{s}} Roub{\'\i}{\v{c}}ek.
\newblock Thermoviscoelasticity in kelvin--voigt rheology at large strains.
\newblock {\em Archive for Rational Mechanics and Analysis}, 238(1):1--45,
  2020.

\bibitem[MRS17]{Mielke2017}
Alexander Mielke, Riccarda Rossi, and Giuseppe Savar{\'{e}}.
\newblock Global existence results for viscoplasticity at finite strain.
\newblock {\em Archive for Rational Mechanics and Analysis}, 227(1):423--475,
  sep 2017.

\bibitem[Ngu00]{Nguyen2000}
Quoc~Son. Nguyen.
\newblock {\em Stability and Nonlinear Solid Mechanics}.
\newblock Wiley, 2000.

\bibitem[Pro08]{Prohl2008}
Andreas Prohl.
\newblock Convergence of a {{Finite Element-Based Space-Time Discretization}}
  in {{Elastodynamics}}.
\newblock {\em SIAM Journal on Numerical Analysis}, 46(5):2469--2483, January
  2008.

\bibitem[Pul84]{Pultar1984}
Milan Pultar.
\newblock Solutions of abstract hyperbolic equations by rothe method.
\newblock {\em Applications of Mathematics}, 29(1):23--39, 1984.

\bibitem[RSS17]{RinSchSul17}
Filip Rindler, Sebastian Schwarzacher, and Endre S\"uli.
\newblock Regularity and approximation of strong solutions to rate-independent
  systems.
\newblock {\em Math. Models Methods Appl. Sci.}, 27:2511--2556, 2017.

\bibitem[RT21]{RouTso21}
Tom{\'a}{\v{s}} Roub{\'\i}{\v{c}}ek and Chrysoula Tsogka.
\newblock Staggered explicit-implicit time-discretization for elastodynamics
  with dissipative internal variables.
\newblock {\em ESAIM: Mathematical Modelling and Numerical Analysis},
  55:S397--S416, 2021.

\end{thebibliography}
\end{document}